 \title{Projectors on the intermediate algebraic Jacobians}
\author{Charles Vial}
 \date{}
\begin{document}

\maketitle

\begin{abstract}
  Let $X$ be a complex smooth projective variety of dimension $d$.
  Under some assumption on the cohomology of $X$, we construct
  mutually orthogonal idempotents in $CH_d(X \times X) \otimes \Q$
  whose action on algebraically trivial cycles coincides with the
  Abel-Jacobi map. Such a construction generalizes Murre's
  construction of the Albanese and Picard idempotents and makes it
  possible to give new examples of varieties admitting a self-dual
  Chow-K\"unneth decomposition satisfying the motivic Lefschetz
  conjecture as well as new examples of varieties having a Kimura
  finite dimensional Chow motive. For instance, we prove that
  fourfolds with Chow group of zero-cycles supported on a curve (e.g.
  rationally connected fourfolds) have a self-dual Chow-K\"unneth
  decomposition which satisfies the motivic Lefschetz conjecture and
  consequently Grothendieck's standard conjectures. We also prove that
  hypersurfaces of very low degree are Kimura finite dimensional.
\end{abstract}

\section*{Introduction}

Let $X$ be a smooth projective variety of dimension $d$ over an
algebraically closed field $k\subset \C$. The Chow group $CH_i(X)$ of
cycles of dimension $i$ on $X$ is the $\Q$-vectorspace generated by
$i$-cycles on $X$ modulo rational equivalence. Given $\sim$ an
equivalence relation on cycles, $CH_i(X)_\sim$ denotes those cycles
which are $\sim 0$. In this paper, $\sim$ will either be algebraic,
homological or numerical equivalence. All three equivalence relations
agree on zero-cycles and are spanned by the zero cycles of degree
zero.

Being able to exhibit cycles in $CH_d(X \times X)$ with appropriate
action on the homology of $X$ is essential to Grothendieck's theory of
pure motives. As discussed for instance in \cite{Jannsen}, being able
to exhibit cycles in $CH_d(X \times X)$ which are idempotents is a
prerequisite to the understanding of Chow groups as part of the
framework of the Bloch-Beilinson-Murre philosophy. Roughly speaking,
such a framework predicts that the Chow groups of $X$ should be
controlled by the cohomology of $X$. In this paper we address a
question of a slightly different nature as whether the Chow groups of
$X$ dictate its Chow motive. Of course, we do not answer such a
question in generality. However, we completely answer this question in
the case when the Chow groups of $X$ are generated by the Chow groups
of zero-cycles of curves. For this purpose we construct appropriate
idempotents in $CH_d(X \times X)$.  In the spirit of the BBM
philosophy, work of Esnault and Levine \cite{EL} (and Jannsen
\cite{Jannsen} in the case of points) shows that if the Chow groups of
$X$ are generated by the Chow groups of curves, then the cohomology of
$X$ is generated by the cohomology of curves. Here, as a consequence
of the construction of appropriate idempotents, we show that if the
Chow groups of $X$ are generated by the Chow groups of curves, then
not only is the cohomology of $X$ generated by the cohomology of
curves but the Chow motive of $X$ is generated by the Chow motives of
curves (see theorem \ref{theorem2} below). In particular, this
complements Esnault and Levine's theorem by showing that the Chow
motive of $X$ is finite dimensional in the sense of Kimura
\cite{Kimura}.  The basic properties of pure motives are exposed in
\cite{Scholl} and the (covariant) notations we use are those of
\cite{KMP}.\medskip

Murre \cite{Murre} constructed mutually orthogonal idempotents $\Pi_1$
and $\Pi_{2d-1}$ in $CH_d(X \times X)$ called respectively the
Albanese projector and the Picard projector. Such idempotents satisfy
the following properties.
\begin{itemize}
\item $\Pi_1 = {}^t\Pi_{2d-1}$.
\item $(\Pi_1)_*H_*(X) = H_1(X)$ and $(\Pi_{2d-1})_*H_*(X) =
  H_{2d-1}(X)$.
\item $(\Pi_1)_*CH_*(X) = (\Pi_1)_*CH_0(X) \simeq \mathrm{Alb}_X(k)
  \otimes \Q$.
\item $(\Pi_{2d-1})_*CH_*(X) = CH_{d-1}(X)_{\hom} \simeq
  \mathrm{Pic}^0_X(k) \otimes \Q$.
\end{itemize}

Scholl \cite{Scholl} then showed that it is possible to modify
slightly the construction of these idempotents in order to, in
addition, have a Lefschetz isomorphism :
\begin{itemize}
\item The map $\iota_*\iota^* : (X,\Pi_{2d-1},0) \r (X,\Pi_{1},d-1)$
  is an isomorphism of Chow motives. Here $\iota : C \r X$ is a smooth
  linear section of dimension one of $X$.
\end{itemize}

In this paper, we wish to generalize Murre's construction in the
following sense : we wish to construct mutually orthogonal idempotents
$\Pi_{2i+1,i}$ in $CH_d(X \times X)$ which, in homology, define
projectors on the largest subHodge structure of $H_{2i+1}(X)$
generated by the $H_1$'s of curves. Here $H_k(X) := H_k(X(\C),\Q)$
which is isomorphic to $H^{2d-k}(X(\C),\Q)$. We offer two different
constructions. \medskip

The first construction is exposed in the first section. It is defined
for all smooth projective varieties $X$ but we cannot show that the
idempotents constructed there act appropriately in homology without
making some assumptions on $X$. In some sense the idempotents
constructed there lift the largest submotive of a curve contained in
the numerical motive of $X$. What is needed is Jannsen's
semi-simplicity theorem \cite{Jannsen3} in order to produce
idempotents modulo numerical equivalence, and then a lifting property
from numerical equivalence up to rational equivalence (proposition
\ref{prop}). \medskip

The second construction, which is much more precise, gives the
required idempotents but depends on an assumption on the cohomology of
$X$ which we describe below.  Let's define ${N}^i H_{2i}(X)$ to be the
image of the rational cycle class map $cl_i : CH_i(X) \r H_{2i}(X)$
and $$ {N}^i H_{2i+1}(X) := \sum \im \big( \Gamma_* : H_{1}(C) \r
H_{2i+1}(X) \big),$$ where the sum runs through all smooth projective
curves $C$ and through all correspondences $\Gamma \in CH_{i+1}( C
\times X)$. The use of the notation $N^iH_{2i+1}(X)$ is not arbitrary
since it can be shown that this subgroup of $H_{2i+1}(X)$ is spanned
by those classes that vanish in the open complement of some subvariety
of $X$ of dimension $i+1$.  The group $N^iH_{2i+1}(X)$ is thus the
last step of the coniveau filtration on $H_{2i+1}(X)$.

Given an integer $i$, the assumption we need on $X$ in order to
construct the idempotent that we will denote $\Pi_{i,\lfloor i/2
  \rfloor}$ is that the cup product pairing $H_{2d-i}(X) \times
H_{i}(X) \r \Q$ restricts to a non degenerate pairing on ${N}^{\lfloor
  (2d-i)/2\rfloor }H_{2d-i}(X) \times {N}^{\lfloor i/2\rfloor
}H_{i}(X)$. We begin the second section by showing in lemma
\ref{pairing-proj} that such pairings are non degenerate for a large
class of varieties. Lemma \ref{pairing-proj} also shows that these
pairings are expected to be non degenerate for all smooth projective
varieties if one believes in Grothendieck's standard conjectures.

The construction of the projectors $\Pi_{2i,i}$ is unsurprising and is
usually used to extract the N\'eron-Severi group $NS_i(X)$ out of
$CH_i(X)$ :

\begin{theorem2} \label{projector1}
   Let $i$ be an integer. Assume that the pairing
  ${N}^{d-i}H_{2d-2i}(X) \times {N}^{i}H_{2i}(X) \r \Q$ is non
  degenerate. Then there exist idempotents $\Pi_{2i,i}$ and
  $\Pi_{2d-2i,d-i}$ in $CH_d(X \times X)$ such that
  \begin{itemize}
  \item $\Pi_{2i,i} = {}^t\Pi_{2d-2i,d-i}$.
  \item $(\Pi_{2i,i})_*H_*(X) = {N}^{i}H_{2i}(X)$.
  \item $CH_i(X)_{\hom} = \ker \big(\Pi_{2i,i} : CH_i(X) \r
    CH_i(X)\big).$
  \item The Chow motive $(X,\Pi_{2i,i},0)$ is isomorphic to
    $(\L^{\otimes i})^{\oplus d_i}$ where $d_i = \dim N^iH_{2i}(X)$.
  \item If $2i \geq d$ there is a Lefschetz isomorphism of Chow
    motives $(X,\Pi_{2i,i},0) \r (X,\Pi_{2d-2i,d-i},2i-d) $ given by
    intersecting $2i-d$ times with a smooth hyperplane section of $X$.
  \end{itemize}
\end{theorem2}

We now turn to the construction of the projectors $\Pi_{2i+1,i}$. In
particular, our construction gives a motivic interpretation of the
Abel-Jacobi map to the algebraic part of the intermediate Jacobians.
Write $J_i^a(X)$ for the image of the Abel-Jacobi map $AJ_i :
CH_i(X)^\Z_{\alg} \r J_i(X(\C))$, it is an algebraic torus defined
over $k$.

\begin{theorem2} \label{projector2}
  Let $i$ be an integer. Assume that the pairing
  ${N}^{d-i-1}H_{2d-2i-1}(X) \times {N}^{i}H_{2i+1}(X) \r \Q$ is non
  degenerate. Then there exist idempotents $\Pi_{2i+1,i}$ and
  $\Pi_{2d-2i-1,d-i-1}$ in $CH_d(X \times X)$ such that
  \begin{itemize}
\item $\Pi_{2i+1,i} = {}^t\Pi_{2d-2i-1,d-i-1}$.
\item $(\Pi_{2i+1,i})_*H_*(X) = {N}^{i}H_{2i+1}(X)$.
\item $\ker \big( AJ_i : CH_i(X)_{\alg} \r J^a_i(X)(k) \otimes \Q
  \big) = \ker \big(\Pi_{2i+1,i} : CH_i(X)_{\alg} \r CH_i(X)_{\alg}
  \big).$
\item The Chow motive $(X,\Pi_{2i+1,i},0)$ is isomorphic to
  $\h_1(J_i^a(X))(i)$.
\item If $2i+1 \geq d$ there is a Lefschetz isomorphism of Chow
  motives $(X,\Pi_{2i+1,i},0) \r (X,\Pi_{2d-2i-1,d-i-1},2i+1-d) $
  given by intersecting $2i+1-d$ times with a smooth hyperplane
  section of $X$.
\end{itemize}
\end{theorem2}
These generalize Murre's construction of the Albanese and Picard
projectors ($\Pi_{1,0}$ and $\Pi_{2d-1,d-1}$ respectively) because in
the cases $i=0$ or $i=d-1$ we have ${N}^{0}H_{1}(X) =H_{1}(X)$ and
${N}^{d-1}H_{2d-1}(X)= H_{2d-1}(X)$. The pairing
${N}^{d-i-1}H_{2d-2i-1}(X) \times {N}^{i}H_{2i+1}(X) \r \Q$ is thus
just the cup product pairing between $H_{2d-1}(X)$ and $H_1(X)$ and is
always non degenerate.

Finally lemma \ref{pairing-proj} shows that the above pairings are all
non degenerate for curves, surfaces, abelian varieties, complete
intersections, uniruled threefolds, rationally connected fourfolds and
any smooth hyperplane section, product and finite quotient thereof.
For those varieties $X$ for which those idempotents can be constructed
for all $i$ we can show, thanks to the non-commutative Gram-Schmidt
process of lemma \ref{linalg}, that it is possible to choose the
idempotents of theorems \ref{projector1} and \ref{projector2} to be
pairwise orthogonal :

\begin{theorem2} \label{projector3} If the pairings ${N}^{\lfloor (2d-
    i)/2 \rfloor}H_{2d-i}(X) \times {N}^{\lfloor i/2 \rfloor }H_{i}(X)
  \r \Q$ are non degenerate for all $i$ then the idempotents of
  theorems \ref{projector1} and \ref{projector2} can be chosen to be
  pairwise orthogonal.
\end{theorem2}

The second section is then devoted to the proof of these theorems.
\medskip

In the third section, we compute the Chow motive of those varieties
whose Chow groups are all representable. We say that $CH_i(X)_\alg$ is
{\it representable} if there exists a curve $C$ and a correspondence
$\Gamma \in CH_{i+1}(C \times X)$ such that $CH_i(X)_\alg =
\Gamma_*CH_0(C)_\alg$. We show that if $X$ is a variety whose Chow
groups are all representable then the pairings ${N}^{\lfloor (2d- i)/2
  \rfloor}H_{2d-i}(X) \times {N}^{\lfloor i/2 \rfloor }H_{i}(X) \r \Q$
are non degenerate for all $i$.  We then use theorems
\ref{projector1}, \ref{projector2} and \ref{projector3} to compute the
Chow motive of varieties having representable Chow groups. Informally,
Esnault and Levine \cite{EL} showed that if the Chow groups of a
variety $X$ are all representable then the cohomology of $X$ is
generated by the cohomology of curves. Conversely Kimura \cite{Kimura}
proved that if the cohomology groups of $X$ are generated by the
cohomology of curves and if the Chow motive of $X$ is finite
dimensional (See \cite{Kimura} for a definition) then the Chow groups
of $X$ are representable.
Here we prove a stronger statement :

\begin{theorem2} \label{theorem2} Let $k \subseteq \C$ be an
  algebraically closed field. Let $X$ be a smooth projective variety
  of dimension $d$ over $k$. Write $X_\C:=X \times_{\mathrm{Spec} k}
  \mathrm{Spec} \ \C$ and $b_j:=\dim H_{j}(X)$. The following
  statements are equivalent.
  \begin{enumerate}
  \item $\h(X)= \mathds{1} \oplus \h_1(\mathrm{Alb}_X) \oplus
    \L^{\oplus b_{2}} \oplus \h_1(J^a_1(X))(1) \oplus (\L^{\otimes
      2})^{\oplus b_{4}} \oplus \cdots \oplus \h_1(J^a_{d-1}(X))(d-1)
    \oplus \L^{\otimes d}$.
  \item The cycle class maps $cl_i : CH_i(X) \r H_{2i}(X)$ and the
    rational Deligne cycle class maps $cl_i^{\mathcal{D}} : CH_i(X_\C)
    \r H^{\mathcal{D}}_{2i}(X,\Q(i))$ are surjective for all $i$ and
    $\h(X)$ is finite dimensional.
  \item The rational Deligne cycle class maps $cl_i^{\mathcal{D}} :
    CH_i(X_\C) \r H^{\mathcal{D}}_{2i}(X,\Q(i))$ are injective for all
    $i$.
   \item The Chow groups $CH_i(X_\C)_{\alg}$ are representable for all
    $i$.
  \end{enumerate}
\end{theorem2}

Esnault and Levine \cite{EL} proved that if the total Deligne cycle
class map of $X$ is injective then it is surjective. Theorem
\ref{theorem2} gives thus a better insight to the link between
injectivity and surjectivity of cycle class maps.

The proof of the theorem goes as follows. The first statement is
certainly the strongest, i.e. it implies the three others. The
equivalence $(3 \Leftrightarrow 4)$ is certainly known but we couldn't
find a reference for ($4 \Rightarrow 3$), so we include a proof in \S
3.2.  The proof relies on a generalized decomposition of the diagonal
and was essentially written in \cite{EL}. The implication $(2
\Rightarrow 4)$ is due to Kimura and appears in \cite[Theorem
7.10]{Kimura}.  Our main input is then a proof of $(4 \Rightarrow 1)$
which settles the theorem and which we give in \S 3.3. For sake of
completeness we also give a direct proof of $(2 \Rightarrow 1)$ in
\S3.1 using our projectors $\Pi_{2i,i}$ and $\Pi_{2i+1,i}$.

As an immediate corollary we get a generalization of a result by
Jannsen \cite[Th. 3.6]{Jannsen} who proved that if the total cycle
class map of $X$ is injective then it is surjective. This was
also proved by Kimura \cite{Kimura2}.

\begin{theorem2} \label{theorem1} Let $k \subseteq \C$ be an
  algebraically closed field.  Let $X$ be a smooth projective variety
  of dimension $d$ over $k$. The following statements are equivalent.
\begin{enumerate}
\item $\h(X) = \bigoplus_{i=0}^d (\L^{\otimes i})^{\oplus b_{2i}}$.
\item The rational cycle class map $cl : CH_*(X_\C) \r H_{*}(X)$ is
  surjective and $\h(X)$ is finite dimensional.
\item The rational cycle class maps $cl : CH_*(X_\C) \r H_{*}(X)$ is
  injective.
\item The Chow groups $CH_i(X_\C)$ are finite dimensional $\Q$-vector
  spaces for all $i$.
\end{enumerate}
\end{theorem2}
Again this theorem makes more precise the link between injectivity and
surjectivity of cycle class maps. \medskip

In the fourth and last section we are interested in using our
construction of idempotents to give new examples of varieties for
which we can compute explicitly a Chow-K\"unneth decomposition of the
diagonal. Such examples include $3$-folds $X$ satisfying
$H^2(X,\Omega_X)=0$ (e.g. Calabi-Yau $3$-folds), rationally connected
$4$-folds, and $4$-folds admitting a rational map to a curve with
rationally connected general fiber. We also prove that such varieties
satisfy the motivic Lefschetz conjecture and hence Grothendieck's
standard conjectures.

We are also interested in giving new examples of varieties whose Chow
motives are finite dimensional in the sense of Kimura \cite{Kimura}.
These will be given by smooth hyperplane sections of hypersurfaces
covered by a family of linear projective varieties of dimension
$\lfloor n/2 \rfloor $ in $\P^{n+1}$. These were considered by
Esnault, Levine and Viehweg \cite{ELV} and also subsequently by
Otwinowska \cite{Otwinowska} and include hypersurfaces of very small
degree, e.g.  cubic $5$-folds, $5$-folds which are the smooth
intersection of a cubic and a quadric and $7$-folds which are the
smooth intersection of two quadrics.  Other examples are given by
rationally connected threefolds, a case which was treated by
Gorchinskiy and Guletskii in \cite{GG}. \medskip


Let's mention that the construction given in the first section is used
in \cite{Vial1} to prove a generalization of the implication $(4
\Rightarrow 1)$ in theorem \ref{theorem2} to the case of Chow motives
with representable Chow groups. The proof given there does not involve
any cohomology theory, except implicitly through the use of Jannsen's
semi-simplicity theorem whose proof requires the existence of a
``good'' cohomology theory. In \cite{Vial2}, we prove Murre's
conjectures for the varieties considered in \S4. We also refer to
\cite[\S 2]{Vial2} for some statements that do not involve the
cohomology (or the Chow groups) of $X$ in all degrees.

\paragraph{Aknowledgements.} Thanks to Burt Totaro for useful comments. This work is supported by a Nevile
Research Fellowship at Magdalene College, Cambridge and an EPSRC
Postdoctoral Fellowship under grant EP/H028870/1. I would like to
thank both institutions for their support.

\section{A first construction} \label{projnum}

\paragraph{The coniveau filtration on numerical motives.} Let $k$ be
any field and $X$ a smooth projective variety over $k$ of dimension
$d$.  We refer to \cite[\S 7.1]{KMP} for the definition of pure
motives in a covariant setting. Chow motives are pure motives with
rational coefficients for rational equivalence and numerical motives
are pure motives with rational coefficients for numerical equivalence.
The category of Chow motives over $k$ is denoted $\M$ and the category
of numerical motives over $k$ is denoted $\bar{\M}$. The reduction
modulo numerical equivalence of a cycle $\gamma \in CH_k(X)$ is
denoted $\bar{\gamma}$. A fundamental result of Jannsen
\cite{Jannsen3} states that the category of numerical motives is
abelian semi-simple. In particular if $f: N \r M$ is a morphism of
numerical motives with $M = (X,p,n)$, Jannsen's theorem gives the
existence of a correspondence $\pi \in \End_k(M)$ such that $\im f
\simeq (X,\pi,n)$.

It is thus possible to define a coniveau filtration on numerical
motives as in \cite[\S 8]{An} and \cite[\S 7.7]{KMP} : $N^jM := \sum
\big(f: \bar{\h}(Y)(j) \r M \big)$ where the sum runs through all
smooth projective varieties $Y$ and all morphisms $f \in \Hom_k(
\bar{\h}(Y)(j), M )$ and where $\bar{\h}(Y)(j)$ denotes the numerical
motive of $Y$ tensored $j$ times by the Lefschetz motive.

Let's imagine for a moment that Grothendieck's standard conjecture B
(cf. \cite[5.2.4.1]{An}) is true. Then \cite[5.4.2.1]{An} each
numerical motive $M$ has a weight decomposition that we write $M =
\bigoplus_iM_i$. Furthermore, for weight reasons $N^jM_i = \sum
\big(f: \bar{\h}_{i-2j}(Y)(j) \r M_i \big)$. Another consequence of
Grothendieck's standard conjecture B is that if $i :Z \r Y$ is a
smooth hyperplane section of dimension $i-2j$ of $Y$ then $i_* :
\bar{\h}_{i-2j}(Z) \r \bar{\h}_{i-2j}(Y)$ is surjective
\cite[5.2.5.1]{An}. Therefore we have ${N}^j M_i = \sum \im \big(f :
\bar{\h}_{i-2j}(Y)(j) \r M_i \big),$ where the sum runs through all
smooth projective varieties $Y$ with $\dim Y = i-2j$ and through all
morphisms $f \in \Hom_k(\bar{\h}_{i-2j}(Y)(j), M_i)$.

\paragraph{The idempotents $\bar{\pi}_{2j,j}$ and $\bar{\pi}_{2j+1,j}$. }
Let's now forget about the standard conjectures. We know that points
and curves have a weight decomposition \cite[4.3.2]{An}; it is
therefore natural for any integer $j$ and for any numerical motive $M$
to consider the following direct summands of $M$ : $$ M_{2j,j} := \sum
\im \big(f : \bar{\h}_{0}(\Spec k)(j) \r M \big) \ \ \ \mathrm{and} \
\ \ M_{2j+1,j} := \sum \im \big(f : \bar{\h}_{1}(C)(j) \r M \big)$$
where the first sum runs through all morphisms $f \in
\Hom_k(\bar{\h}_0(\Spec k),M)$ and the second sum runs through all
curves $C$ and through all morphisms $f \in \Hom_k(\bar{\h}_1(C),M)$.
Thus in particular there exist for all integers $j$ corespondences
${\pi}_{2j,j}$ and ${\pi}_{2j+1,j}$ in $CH_d(X \times X)$ such that
$M_{2j,j} = (X,\bar{{\pi}}_{2j,j},0)$ and $M_{2j+1,j} =
(X,\bar{{\pi}}_{2j+1,j},0)$.

\paragraph{A lifting property.}

We denote by $\M_0$ (resp. $\M_1$) the full thick subcategory of $\M$
generated by the Chow motives of points (resp. the $\h_1$'s of smooth
projective curves over $k$).
For a motive $P \in \M$, let $\bar{P}$ denote its image in $\bar{\M}$.
(This notation is abusive since we previously denoted numerical
motives with a bar and it is not known if all numerical motives admit
a lift to rational equivalence).  Let's also write $\bar{\M}_0$ (resp.
$\bar{\M}_1$) for the image of $\M_0$ (resp. $\M_1$) in $\bar{\M}$.
The functors $\M_0 \r \bar{\M}_0$ and $\M_1 \r \bar{\M}_1$ are
equivalence of categories (see \cite[corollary 3.4]{Scholl}) and as
such the categories $\M_0$ and $\M_1$ are abelian semi-simple.

\begin{proposition} \label{prop} Let $M$ be an object in $\M_0$ (resp.
  in $\M_1$). Let $N$ be any motive in $\M$. Then any morphism $f : M
  \r N$ induces a splitting $N = N_1 \oplus N_2$ with $N_1$ isomorphic
  to an object in $\M_0$ (resp. in $\M_1$) and $\bar{N}_1 \simeq \im
  \bar{f}$.
\end{proposition}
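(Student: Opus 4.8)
The goal is to split off, inside any Chow motive $N$, a direct summand coming from $\mathcal{M}_0$ (or $\mathcal{M}_1$) which recovers the image of $\bar f$ numerically. The key structural input is that $\mathcal{M}_0$ and $\mathcal{M}_1$ are \emph{abelian semisimple}, being equivalent to $\bar{\mathcal{M}}_0$ and $\bar{\mathcal{M}}_1$ via \cite[Corollary 3.4]{Scholl}. The plan is to first reduce $f\colon M\to N$ to a map whose source is the image of a single idempotent on $M$ that maps isomorphically onto its numerical image, then lift a one-sided inverse from numerical equivalence to rational equivalence, and finally use a standard idempotent-completion argument to peel off the summand.

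\textbf{Step 1: reduce to the case where $\bar f$ is a split mono.} Since $\mathcal{M}_0$ (resp.\ $\mathcal{M}_1$) is abelian semisimple, the morphism $f$ factors in $\mathcal{M}$ through $M \twoheadrightarrow M' \hookrightarrow$ where $M' = (X,e,0)$ for some idempotent $e\in\operatorname{End}(M)$ lying in $\mathcal{M}_0$ (resp.\ $\mathcal{M}_1$) — indeed, $\ker(\bar f)$ is a direct summand of $\bar M$ by semisimplicity, and because $\mathcal{M}_\bullet\to\bar{\mathcal{M}}_\bullet$ is an equivalence this splitting lifts to an idempotent on $M$ itself. Replacing $M$ by $M'$ and $f$ by the induced map, I may assume $\bar f\colon \bar M \to \bar N$ is injective, hence a split monomorphism in the abelian category $\bar{\mathcal{M}}$. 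So there is $\bar g\colon \bar N \to \bar M$ with $\bar g\circ\bar f = \operatorname{id}_{\bar M}$.

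\textbf{Step 2: lift a retraction to rational equivalence.} The correspondence $g$ underlying $\bar g$ may be chosen to lift $\bar g$, but $g\circ f$ need not be the identity on $M$ — however $\overline{g\circ f} = \operatorname{id}_{\bar M}$, so $g\circ f = \operatorname{id}_M + \alpha$ with $\alpha$ numerically trivial in $\operatorname{End}(M)$. Here is where semisimplicity of $\mathcal{M}_0$ (resp.\ $\mathcal{M}_1$) is used again: since $\operatorname{End}_{\mathcal{M}}(M)\to\operatorname{End}_{\bar{\mathcal{M}}}(\bar M)$ is an isomorphism (equivalence of categories!), a numerically trivial endomorphism of $M$ is already zero. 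Thus $g\circ f = \operatorname{id}_M$ on the nose, i.e.\ $f$ is a split monomorphism in $\mathcal{M}$.

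\textbf{Step 3: extract the summand.} Now $p := f\circ g \in \operatorname{End}_{\mathcal{M}}(N)$ satisfies $p^2 = f\circ(g\circ f)\circ g = f\circ g = p$, so $p$ is an idempotent in $CH_d(X\times X)\otimes\mathbb{Q}$ (using that $\mathcal{M}$ is pseudo-abelian). Set $N_1 := (N\text{-image of }p)$ and $N_2 := (N\text{-image of }\operatorname{id}-p)$, so $N = N_1\oplus N_2$. The maps $f\colon M\to N_1$ and $g\colon N_1\to M$ are mutually inverse, so $N_1\simeq M$ is isomorphic to an object of $\mathcal{M}_0$ (resp.\ $\mathcal{M}_1$). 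Finally $\bar p = \bar f\circ\bar g$ is the idempotent onto $\operatorname{im}\bar f$ in the abelian category $\bar{\mathcal{M}}$ (because $\bar g$ is a retraction of the mono $\bar f$), so $\bar N_1 = \operatorname{im}\bar p \simeq \operatorname{im}\bar f$, as required.

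\textbf{Main obstacle.} The only real content is Steps 1–2, and both hinge on the same point: that the bar functor is an \emph{equivalence} on $\mathcal{M}_0$ and $\mathcal{M}_1$, not merely full or essentially surjective. This is what lets me (a) lift the kernel-splitting of $\bar f$ to an idempotent on $M$, and (b) conclude that a numerically trivial endomorphism of $M\in\mathcal{M}_0$ or $\mathcal{M}_1$ vanishes, upgrading the numerical retraction to an honest one. Everything else is formal manipulation in a pseudo-abelian category. I would be careful to check that the idempotent $e$ produced in Step 1 genuinely lands in $\mathcal{M}_0$ (resp.\ $\mathcal{M}_1$): this follows because the equivalence identifies $\operatorname{End}_{\mathcal{M}_0}(M)$ with $\operatorname{End}_{\bar{\mathcal{M}}_0}(\bar M)$ and the kernel idempotent of $\bar f$ lies in the latter since $\bar{\mathcal{M}}_0$ is a (semisimple, hence abelian) subcategory closed under direct summands.
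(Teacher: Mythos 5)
Your proof is correct and takes essentially the same route as the paper: semisimplicity of numerical motives, the equivalence $\M_0 \simeq \bar{\M}_0$ (resp. $\M_1 \simeq \bar{\M}_1$) to see that numerically trivial endomorphisms of $M$ vanish, a lift of a numerical retraction $\bar{g}$ to rational equivalence, and a formal idempotent computation to split $N$. The only difference is bookkeeping: the paper keeps all of $M$, so $g \circ f$ is merely a projector and the idempotent on $N$ is $f \circ g \circ f \circ g$, whereas you first cut $M$ down by an idempotent so that $g \circ f = \mathrm{id}$ and can use $f \circ g$ directly.
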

\begin{proof}
  The morphism $M \r N$ induces a morphism $\bar{M} \r \bar{N}$ and it
  is known that any morphism in an abelian semi-simple category is a
  direct sum of a zero morphism and of an isomorphism (cf.
  \cite[A.2.13]{AnKa}). Let's thus write $$ \bar{M} = \bar{M}_1 \oplus
  \bar{M}_2 \stackrel{\bar{f} \oplus 0}{\longrightarrow} \bar{N}_1
  \oplus \bar{N}_2 = \bar{N}$$ where $\bar{f}$ is an isomorphism
  $\bar{M}_1 \stackrel{\sim}{\longrightarrow} \bar{N}_1$. The
  composition $(\bar{f}^{-1} \oplus 0) \circ (\bar{f} \oplus 0) \in
  \End(\bar{M})$ is therefore equal to the projector $\bar{M}\r
  \bar{M}_1 \r \bar{M}$. Let then $g : N \r M$ be any lift of
  $\bar{f}^{-1} \oplus 0 : \bar{N} \r \bar{M}$ and let $M_1$ be any
  lift of $\bar{M}_1$. Then $g \circ f \in \End(M)$. But it is a fact
  that $\End(M) =\End(\bar{M})$. Therefore, $g \circ f$ is a projector
  on $M_1$. We now claim that $f \circ g \circ f \circ g$ defines a
  projector in $\End(N)$ onto an object isomorphic to $M_1$. Indeed,
  $$(f \circ g \circ f \circ g) \circ (f \circ g \circ f \circ g) = f
  \circ \big( (g \circ f) \circ (g \circ f) \circ (g \circ f) \big)
  \circ g = f \circ ( g \circ f) \circ g = f \circ g \circ f \circ g$$
  and we have the commutative diagram
  \begin{center} $ \xymatrix{ N \ar[r]^g & M \ar[r]^f \ar[dr] & N
      \ar[r]^g & M \ar[r]^f \ar[dr] & N \ar[r]^g & M \ar[r]^f
      \ar[dr] &   N \ar[r]^g &  M \ar[r]^f &   N  \\
      & & M_1 \ar[rr]^{\id} \ar[ur] & & M_1 \ar[rr]^{\id} \ar[ur] & &
      M_1 \ar[ur] & &}$
\end{center} showing that indeed $f \circ g \circ f \circ g$ projects
onto $M_1$ (since it has a retraction).
\end{proof}

\paragraph{The idempotents ${\pi}_{2j,j}$ and ${\pi}_{2j+1,j}$. }
Proposition \ref{prop} shows that it is actually possible to choose
the correspondences ${\pi}_{2j,j}$ and ${\pi}_{2j+1,j}$ above to be
idempotents in $CH_d(X \times X)$. In other words, proposition
\ref{prop} shows that it is possible to define direct summands
$(X,{\pi}_{2j,j},0)$ and $(X,{\pi}_{2j+1,j},0)$ of the Chow motive
$\h(X)$ of $X$ whose reduction modulo numerical equivalence are the
direct summands $M_{2j,j}$ and $M_{2j+1,j}$ defined above.  \medskip

We won't be giving the details here but it can be shown that, if
Grothendieck's standard conjecture B is true for all smooth projective
varieties, then the idempotents ${\pi}_{2j,j}$ and ${\pi}_{2j+1,j}$
constructed here coincide modulo homological equivalence with the
idempotents ${\Pi}_{2j,j}$ and ${\Pi}_{2j+1,j}$ of \S2.

\paragraph{A remark about the K\"unneth projectors.}

We would like to explain how it is possible to construct cycles whose
numerical classes are the expected K\"unneth projectors, i.e. whose
homological classes are expected to be the projectors $H_*(X) \r
H_i(X) \r H_*(X)$. We proceed by induction on $d = \dim X$. If $X =
\mathrm{Spec} \ k$, we define $\pi_0^X$ to be the cycle $X \times X$
inside $X \times X$. Suppose we have constructed projectors modulo
numerical equivalence $\pi_0^Y, \pi_1^Y, \ldots, \pi_{2(\dim Y)-2}^Y$
for all smooth projective varieties $Y$ of dimension $\dim Y < d$.
Then, for all $i \in \{0,\ldots, d-1\}$, we define the cycle $\pi_i^X
\in CH_d(X\times X)/\num$ to be the projector such that
$$(X,\pi_i^X,0) = \bigcup_{f : Y \r X} \im \big(f_* : (Y,\pi^Y_i,0) \r
\bar{\h}(X)\big),$$ where the sum runs through all smooth projective
varieties $Y$ of dimension $i$ and all morphisms $f : Y \r X$.  We
then set $\pi_{2d-i}^X = {}^t\pi_i^X$ and $\pi_d^X = \id_X -
\sum_{i\neq d} \pi_i^X$.

If Grothendieck's standard conjecture B is true, then it can be
checked that those define the expected K\"unneth projectors.

\section{The projectors $\Pi_{2i,i}$ and $\Pi_{2i+1,i}$}

In this section, we fix an algebraically closed field $k$ with an
embedding $k \hookrightarrow \C$ and we prove theorems
\ref{projector1}, \ref{projector2} and \ref{projector3}. We start with
a lemma which shows that many varieties do satisfy the assumptions of
these theorems. \medskip

Let $X$ be a $d$-dimensional smooth projective variety over $k$.  Let
$\iota : H \r X$ be a smooth hyperplane section of $X$ and let
$\Gamma_\iota \in CH_{d-1}(H \times X)$ be its graph and let
${}^t\Gamma_\iota$ be the transpose of $\Gamma_\iota$. We define $L:=
\Gamma_\iota \circ {}^t\Gamma_\iota \in CH_{d-1}(X \times X)$. The
hard Lefschetz theorem asserts that the map $L^i : H_{d+i}(X) \r
H_{d-i}(X)$ given by intersecting $i$ times with $H$ is an isomorphism
for all $i \geq 0$. The variety $X$ is said to satisfy property B if
the inverse morphism is induced by an algebraic correrspondence for
all $i \geq 0$. It is one of Grothendieck's standard conjectures that
all smooth projective varieties should satisfy B.  \medskip

\begin{lemma} \label{pairing-proj} Let $i$ be an integer in
  $\{d,\ldots,2d\}$. The cup product pairing ${N}^{\lfloor (2d- i)/2
    \rfloor}H_{2d-i}(X) \times {N}^{\lfloor i/2 \rfloor }H_{i}(X) \r
  \Q$ is non degenerate in either of the following cases:
  \begin{itemize}
  \item $X$ satisfies property B.
  \item ${N}^{\lfloor i/2 \rfloor }H_{i}(X) = H_i(X)$.
  \end{itemize}
  In particular the pairing ${N}^{d-1}H_{2d-1}(X) \times {N}^{0
  }H_{1}(X) \r \Q$ is non degenerate for all $X$.
\end{lemma}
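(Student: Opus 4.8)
The plan is to handle the two cases separately, reducing both to a statement about the compatibility of the coniveau filtration with the hard Lefschetz isomorphism. First I would recall that by hard Lefschetz, the cup product pairing $H_{2d-i}(X) \times H_i(X) \to \Q$ is perfect, and that for $i \geq d$ the operator $L^{i-d} : H_i(X) \to H_{2d-i}(X)$ is an isomorphism, so non-degeneracy of the restriction to $N^{\lfloor (2d-i)/2 \rfloor}H_{2d-i}(X) \times N^{\lfloor i/2 \rfloor}H_i(X)$ is equivalent to showing that $L^{i-d}$ carries $N^{\lfloor i/2 \rfloor}H_i(X)$ isomorphically onto $N^{\lfloor (2d-i)/2 \rfloor}H_{2d-i}(X)$, together with the fact that the pairing $\langle \alpha, L^{i-d}\beta\rangle$ is a non-degenerate quadratic form on the primitive-type pieces. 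Concretely, I would use that $N^j H_k(X)$ is by definition $\sum \im(\Gamma_* : H_1(C) \to H_k(X))$ (for $k$ odd) or $\im(cl_j : CH_j(X) \to H_k(X))$ (for $k$ even, with appropriate re-indexing), and that composing a correspondence $\Gamma \in CH(C \times X)$ with the hyperplane class $L$ again lands in the span defining the coniveau piece in the next degree. This shows in all cases that $L^{i-d}$ maps $N^{\lfloor i/2\rfloor}H_i(X)$ into $N^{\lfloor (2d-i)/2\rfloor}H_{2d-i}(X)$; the content is the reverse inclusion, i.e. surjectivity.

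In the case where $X$ satisfies property B, surjectivity is immediate: property B provides an algebraic correspondence $\Lambda$ inducing the inverse of $L^{i-d}$ on cohomology, and since $\Lambda$ is algebraic it again preserves the subspaces spanned by images of correspondences from curves (resp. from $CH_j$), so $\Lambda_*$ carries $N^{\lfloor (2d-i)/2\rfloor}H_{2d-i}(X)$ back into $N^{\lfloor i/2\rfloor}H_i(X)$, giving the desired isomorphism. Then the restricted pairing is non-degenerate because it is the restriction of a perfect pairing to a subspace $V \times W$ with $L^{i-d}$ identifying $W$ with $V$ and $\langle \,\cdot\,, L^{i-d}(\cdot)\rangle$ positive (or negative) definite on the relevant primitive pieces by the Hodge–Riemann bilinear relations, hence in particular non-degenerate on $W$. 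In the case $N^{\lfloor i/2\rfloor}H_i(X) = H_i(X)$, I would argue that the whole of $H_i(X)$ is algebraically generated by curves (resp. is all algebraic), and hard Lefschetz already identifies $H_i(X)$ with $H_{2d-i}(X)$ via $L^{i-d}$; since the source is the full cohomology, its image under $L^{i-d}$ is all of $H_{2d-i}(X)$, which a fortiori equals $N^{\lfloor (2d-i)/2\rfloor}H_{2d-i}(X)$ (the coniveau piece being a subspace of the whole), so the pairing is just the full perfect cup product pairing restricted to $H_{2d-i}(X) \times H_i(X)$, which is perfect.

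The main obstacle I anticipate is the bookkeeping needed to verify that the coniveau subspaces are genuinely preserved by composition with $L$ and by the property-B correspondence, across the different parities of $i$ and of $2d-i$ — in particular matching the floor-function indices $\lfloor i/2\rfloor$ and $\lfloor (2d-i)/2\rfloor$ correctly in the odd and even cases and checking the degrees of the correspondences involved lie in the right Chow groups. The final assertion about $N^{d-1}H_{2d-1}(X) \times N^0 H_1(X)$ then follows by taking $i = 2d-1$: there $2d - i = 1$, so $\lfloor (2d-i)/2\rfloor = 0$ and $N^0 H_1(X) = H_1(X)$, which is exactly the second bullet case, so the pairing is non-degenerate for every $X$ with no hypothesis needed.
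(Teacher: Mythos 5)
Your degree bookkeeping (that $L^{i-d}$ and, under property B, a correspondence inducing its inverse both preserve the coniveau subspaces) is correct, and your treatment of the second bullet coincides with the paper's ``obvious'' case: if $N^{\lfloor i/2\rfloor}H_i(X)=H_i(X)$ then $H_{2d-i}(X)=L^{i-d}N^{\lfloor i/2\rfloor}H_i(X)\subseteq N^{\lfloor(2d-i)/2\rfloor}H_{2d-i}(X)$, so the pairing is the full Poincar\'e duality pairing. The gap is in the property-B case, at exactly the point where the real content lies. Identifying $W:=N^{\lfloor i/2\rfloor}H_i(X)$ with $V:=N^{\lfloor(2d-i)/2\rfloor}H_{2d-i}(X)$ via $L^{i-d}$ only reformulates the claim as: the form $Q(x,y)=\langle L^{i-d}x,y\rangle$ is non-degenerate on the subspace $W$. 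This does not follow from non-degeneracy of $Q$ on $H_i(X)$ together with Hodge--Riemann definiteness ``on the relevant primitive pieces'' of the ambient cohomology: a non-degenerate indefinite form can restrict to zero on a sub-Hodge structure consisting of algebraic classes --- for instance the span of the class of a fibre of an elliptic fibration on a K3 surface inside $H_2$. What is actually needed, and what the paper delegates to \cite[Prop.~1.4]{Vial2} while flagging the Hodge index theorem as the crucial ingredient, is (i) that $W$ is compatible with the Lefschetz (primitive) decomposition --- this uses property B a second time: under B the projectors onto the Lefschetz components are induced by algebraic correspondences (Kleiman), hence preserve the coniveau subspaces by the same bookkeeping you carried out for $L$ --- and (ii) the Hodge--Riemann relations / Hodge index theorem applied on each primitive component of $W$, using that $W$ is a sub-Hodge structure (so stable under the Weil operator in the odd case), to get definiteness on each piece, mutual orthogonality of the pieces, and hence non-degeneracy of $Q|_W$. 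Your plan names Hodge--Riemann but treats this reduction as automatic; it is the step that requires proof, and the ``main obstacle'' you anticipate (index bookkeeping) is not where the difficulty sits.

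There is also a slip in your last paragraph. For $i=2d-1$ the second bullet requires $N^{\lfloor i/2\rfloor}H_i(X)=H_i(X)$, i.e.\ $N^{d-1}H_{2d-1}(X)=H_{2d-1}(X)$, not $N^{0}H_1(X)=H_1(X)$: fullness of the low-degree factor alone cannot give non-degeneracy, since if $N^{d-1}H_{2d-1}(X)$ were a proper subspace its annihilator in $H_1(X)$ would be nonzero and the pairing would degenerate on the $H_1$ side. The final assertion is nevertheless correct because $N^{d-1}H_{2d-1}(X)=H_{2d-1}(X)$ holds for every $X$ --- $H_{2d-1}(X)\cong H^1(X)$ is the image of $H_1$ of a curve linear section under a divisorial correspondence (Picard--Albanese theory), as the paper recalls after the statements of its main theorems --- but that is the fact you must quote to invoke the second bullet.
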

\begin{proof} In the case $X$ satisfies property B, the Hodge index
  theorem is a crucial ingredient and the lemma is a special case of
  \cite[Prop.  1.4]{Vial2}. The other case is obvious.
\end{proof}

Therefore the results in this section hold for curves, surfaces,
abelian varieties, complete intersections, uniruled threefolds,
rationally connected fourfolds and any smooth hypersurface section,
product or finite quotient thereof. All those varieties satisfy
property B. Lemma \ref{essential} shows that the results in this
section also hold for varieties for which some cycle class map is
surjective.

\subsection{Setup} \label{setup}

We are given a smooth projective variety $X$ over $k$ of dimension
$d$. By definition $ N^{i}H_{2i}(X)$ coincides with the image of the
cycle class map $CH_{2i}(X) \r H_{2i}(X)$.  For each integer $i$, let
$d_i = \dim_\Q N^iH_{2i}(X)$ and let $P_i$ be the disjoint union of
$d_i$ copies of $\Spec \ k$. Notice that $d_i>0$ because $N^iH_{2i}(X)$
always contains the $(d-i)$-fold intersection of a hyperplane section.
We then fix $\Gamma_{2i} \in CH_i(P_i \times X)$ such that
$$(\Gamma_{2i})_* : H_0(P_i) \stackrel{\simeq}{\longrightarrow}
N^{i}H_{2i}(X)$$ is bijective. This amounts to fixing a basis of
$N^iH_{2i}(X) = \im (cl_i : CH_i(X) \r H_{2i}(X))$.\medskip

For each integer $i$, we also fix a smooth projective curve (not
necessarily connected) $C_i$ and a correspondence $\Gamma_{2i+1} \in
CH_{i+1}(C_i \times X)$ such that $$(\Gamma_{2i+1})_*H_1(C_i) =
N^iH_{2i+1}(X).$$ Let $C_{i,l}$ be the connected components of $C_i$
and for all $l$ let $z_{i,l}$ be a rational point on $C_{i,l}$. Up to
composing $\Gamma_{2i+1}$ with the correspondence $\Delta_{C_i} -
\sum_l \big(\{z_{i,l}\}\times C_{i,l} + C_{i,l} \times \{z_{i,l}\}
\big) \in CH_1(C_i \times C_i)$, we can and we will assume moreover
that $$(\Gamma_{2i+1})_*H_0(C_i) = (\Gamma_{2i+1})_*H_2(C_i) = 0.$$
\medskip

In order to establish the Lefschetz isomorphism of theorems
\ref{projector1}, \ref{projector2} and \ref{projector3} we will make
use of the following easy lemma.

\begin{lemma} \label{lef} Let $i$ be an integer in $\{d+1,\ldots,2d\}$
  and assume that the cup product pairing ${N}^{\lfloor (2d- i)/2
    \rfloor}H_{2d-i}(X) \times {N}^{\lfloor i/2 \rfloor }H_{i}(X) \r
  \Q$ is non degenerate. Then $L^{i-d} : H_i(X) \r H_{2d-i}(X)$ maps
  isomorphically ${N}^{\lfloor i/2 \rfloor }H_{i}(X)$ to ${N}^{\lfloor
    (2d- i)/2 \rfloor}H_{2d-i}(X)$.
\end{lemma}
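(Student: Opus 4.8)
The plan is to exploit the non-degeneracy hypothesis together with the hard Lefschetz theorem to pin down the image of $N^{\lfloor i/2\rfloor}H_i(X)$ under $L^{i-d}$. First I would observe that $L^{i-d}: H_i(X) \to H_{2d-i}(X)$ is an isomorphism by hard Lefschetz, so it suffices to prove that it carries $N^{\lfloor i/2\rfloor}H_i(X)$ \emph{onto} $N^{\lfloor (2d-i)/2\rfloor}H_{2d-i}(X)$; the injectivity is automatic and a dimension count then forces bijectivity once both inclusions are established. The key structural input is that the operator $L$ is induced by intersecting with a hyperplane section, hence respects the coniveau filtration: if a class is supported on a subvariety of dimension $\lfloor i/2\rfloor$ (which is the cohomological content of lying in $N^{\lfloor i/2\rfloor}H_i(X)$, as recalled in the introduction), then intersecting with a hyperplane keeps it supported on a subvariety of the same dimension, and one checks the index bookkeeping $\lfloor (2d-i)/2\rfloor = \lfloor i/2 \rfloor$ when $i$ is even, while for $i$ odd one uses $\lfloor (2d-i)/2 \rfloor = d - \lceil i/2 \rceil$ — here I would treat the parity of $i$ carefully since the floor functions in the statement are exactly calibrated so that $L^{i-d}$ shifts the relevant coniveau level correctly. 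Concretely, $L^{i-d}(N^{\lfloor i/2\rfloor}H_i(X)) \subseteq N^{\lfloor(2d-i)/2\rfloor}H_{2d-i}(X)$ follows from the explicit description of $N^i H_{2i+1}(X)$ as a sum of images of correspondences from curves, since $L \circ \Gamma_*$ is again a correspondence of the right relative dimension; for the even case one uses that $L$ sends $N^j H_{2j}(X) = \mathrm{im}(cl_j)$ into $N^{j+1}H_{2j+2}(X)$ because intersecting an algebraic cycle class with a hyperplane is an algebraic cycle class.

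For the reverse inclusion I would use non-degeneracy. Suppose $\beta \in N^{\lfloor(2d-i)/2\rfloor}H_{2d-i}(X)$; since $L^{i-d}$ is an isomorphism $H_i(X) \xrightarrow{\sim} H_{2d-i}(X)$, write $\beta = L^{i-d}\alpha$ for a unique $\alpha \in H_i(X)$, and I must show $\alpha \in N^{\lfloor i/2\rfloor}H_i(X)$. Consider the cup product pairing of $\alpha$ against an arbitrary class $\gamma \in N^{\lfloor(2d-i)/2\rfloor}H_{2d-i}(X)$: using the fact that $L$ is self-adjoint up to sign for the cup product pairing (again because $L = \Gamma_\iota \circ {}^t\Gamma_\iota$ and transposing correspondences is compatible with duality of cohomology), one rewrites $\langle \alpha, \gamma\rangle$ in terms of $\langle \beta, \cdot \rangle$ evaluated on classes obtained from $\gamma$ by applying powers of $L$, all of which land in the subspaces appearing in the non-degenerate pairing of the hypothesis. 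The point is that $\alpha$ is then detected, modulo the primitive decomposition, entirely by its pairing against $N^{\lfloor i/2 \rfloor}H_i(X)$-type classes; combining this with the inclusion already proved, $N^{\lfloor i/2\rfloor}H_i(X)$ and its candidate complement are non-degenerately paired, so $\alpha$ must lie in $N^{\lfloor i/2\rfloor}H_i(X)$.

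The main obstacle I anticipate is keeping the coniveau indices and floor functions straight across the two parities of $i$, and in particular verifying that $L$ genuinely shifts the coniveau filtration by the expected amount rather than something weaker — the containment "support in dimension $m$" after intersecting with a hyperplane naively only gives "support in dimension $m$" (not $m-1$), so I need to be sure the indices in the statement, $\lfloor i/2\rfloor$ versus $\lfloor (2d-i)/2\rfloor$, are matched precisely to this behavior. A secondary technical point is the compatibility of the Lefschetz operator with the cup product pairing and with transposition of correspondences; this is standard (it is essentially the statement that $H_\bullet$ is a Weil cohomology and $L$ is algebraic), and I would simply cite the relevant formalism rather than reprove it. Once these compatibilities are in hand, the argument is a short piece of linear algebra: an isomorphism that maps one subspace into another and is compatible with non-degenerate pairings on the two sides must map the subspace onto the target.
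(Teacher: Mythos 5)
Your first paragraph is in substance the paper's proof: injectivity of $L^{i-d}$ on $H_i(X)$ comes from hard Lefschetz, and the containment $L^{i-d}\big(N^{\lfloor i/2\rfloor}H_i(X)\big) \subseteq N^{\lfloor (2d-i)/2\rfloor}H_{2d-i}(X)$ is immediate from the definition of $N$, since $L$ is an algebraic correspondence: composing a correspondence $\Gamma \in CH_{\lfloor i/2\rfloor +1}(C\times X)$ (or a cycle class in the even case) with $L^{i-d}$ yields a correspondence of exactly the dimension defining the target $N$-group, so the index bookkeeping you worry about is automatic and no finer statement about "shifting the coniveau filtration" is needed. The only remaining input the hypothesis has to supply is the elementary observation that a non-degenerate pairing between the two finite-dimensional spaces $N^{\lfloor (2d-i)/2\rfloor}H_{2d-i}(X)$ and $N^{\lfloor i/2\rfloor}H_i(X)$ forces them to have the same dimension; an injective map between spaces of equal dimension whose image lies in the target subspace is then an isomorphism onto it. That is the whole of the paper's argument.

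Your second paragraph, which is where you actually use the non-degeneracy, does not work as written. Given $\beta \in N^{\lfloor (2d-i)/2\rfloor}H_{2d-i}(X)$ with $\beta = L^{i-d}\alpha$, you try to conclude $\alpha \in N^{\lfloor i/2\rfloor}H_i(X)$ from the values of cup products of $\alpha$ against classes of $N^{\lfloor (2d-i)/2\rfloor}H_{2d-i}(X)$. But membership in a subspace cannot be detected by such pairings: the hypothesis only asserts non-degeneracy of the pairing \emph{restricted} to the two $N$-subspaces, so it gives no control whatsoever over a possible component of $\alpha$ lying outside $N^{\lfloor i/2\rfloor}H_i(X)$ (such a component may pair trivially or arbitrarily with every class you test against), and the appeal to the primitive decomposition does not repair this, since nothing guarantees that decomposition is compatible with the coniveau subspaces without standard-conjecture-type input. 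The claim you aim at there is, given your first paragraph, equivalent to the surjectivity you need, but the way to get it is indirect: delete that paragraph and invoke the dimension equality coming from non-degeneracy, which closes the proof in one line.
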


\begin{proof}
  The non degeneracy assumption says in particular that the two $\Q$
  vectorspaces ${N}^{\lfloor i/2 \rfloor }H_{i}(X)$ and ${N}^{\lfloor
    (2d- i)/2 \rfloor}H_{2d-i}(X)$ have same dimension.  The Lefschetz
  isomorphism $L$ restricts to an injective map ${N}^{\lfloor i/2
    \rfloor }H_{i}(X) \r H_{2d-i}(X)$ and, by definition of $N$, maps
  ${N}^{\lfloor i/2 \rfloor }H_{i}(X)$ into ${N}^{\lfloor (2d- i)/2
    \rfloor}H_{2d-i}(X)$.
\end{proof}

\begin{remark}
  In fact if the pairing ${N}^{\lfloor (2d- i)/2 \rfloor}H_{2d-i}(X)
  \times {N}^{\lfloor i/2 \rfloor }H_{i}(X) \r \Q$ is non degenerate,
  then more is true. Namely, as a consequence of the Lefschetz
  isomorphisms of propositions \ref{lef1} and \ref{lef2}, we have that
  the isomorphism $L^{i-d} : {N}^{\lfloor i/2 \rfloor }H_{i}(X)
  \r{N}^{\lfloor (2d- i)/2 \rfloor}H_{2d-i}(X)$ has its inverse
  induced by a correspondence.
\end{remark}

If for an integer $i$ such that $2i \in \{d, \ldots,2d\}$, the cup
product pairing ${N}^{d-i}H_{2d-2i}(X) \times {N}^{i}H_{2i}(X) \r \Q$
is non degenerate, lemma \ref{lef} makes it possible to furthermore
assume that $P_i = P_{d-i}$ and $\Gamma_{2d-2i} = L^{2i-d} \circ
\Gamma_{2i}$.

Likewise if for an integer $i$ such that $2i+1 \in \{d, \ldots,2d\}$,
the cup product pairing ${N}^{d-i-1}H_{2d-2i-1}(X) \times
{N}^{i}H_{2i+1}(X) \r \Q$ is non degenerate, lemma \ref{lef} makes it
possible to furthermore assume that $C_i = C_{d-i-1}$ and
$\Gamma_{2d-2i-1} = L^{2i+1-d} \circ \Gamma_{2i+1}$.

\subsection{Proof of theorem \ref{projector1}}

In this paragraph we consider an integer $i$ with $2i \geq d$ and a
smooth projective variety $X$ of dimension $d$ for which the pairing
$N^{d-i}H_{2d-2i}(X) \times N^{i}H_{2i}(X) \r \Q$ is non degenerate.
\medskip

The correspondence $\Gamma_{2d-2i} := L^{2i-d} \circ \Gamma_{2i}$
induces by duality a bijective map
$$({}^t\Gamma_{2d-2i})_* : \big( N^{d-i}H_{2d-2i}(X)\big)^\vee
\stackrel{\simeq}{\longrightarrow} H_0(P_{i})^\vee.$$ By the non
degeneracy assumption $\big( N^{d-i}H_{2d-2i}(X)\big)^\vee$ identifies
with $ N^{i}H_{2i}(X)$ and $H_0(P_{i})^\vee$ identifies with
$H_0(P_{i})$. Therefore the composition ${}^t\Gamma_{2d-2i} \circ
\Gamma_{2i}$ induces a $\Q$-linear isomorphism $H_0(P_i)
\stackrel{\simeq}{\longrightarrow} H_0(P_{i})$.  It is then clear that
there exists a correspondence $\gamma_{2i} \in CH_0(P_{i} \times
P_{i})$ such that $\gamma_{2i} \circ {}^t\Gamma_{2d-2i} \circ
\Gamma_{2i}$ acts as identity on $H_0(P_i)$.  Because $L = {}^tL$ we
also have $$\gamma_{2i} = {}^t\gamma_{2i} .$$

We then set $$\Pi_{2i,i} := \Gamma_{2i} \circ \gamma_{2i} \circ
{}^t\Gamma_{2d-2i} \in CH_d(X\times X).$$ Since $\gamma_{2i} \circ
{}^t\Gamma_{2d-2i} \circ \Gamma_{2i} = \id \in CH_0(P_i \times P_i)$,
it is clear that $\Pi_{2i,i}$ is an idempotent and that it induces the
projector $H_*(X) \r N^iH_{2i}(X) \r H_*(X)$ in homology. Also, it is
clear that $\Pi_{2d-2i,d-i} := {}^t\Pi_{2i,i}$ (Notice that if $2i=d$
then $\Pi_{d,d/2} = {}^t\Pi_{d,d/2}$) defines an idempotent which
induces the projector $H_*(X) \r N^{d-i}H_{2d-2i}(X) \r H_*(X)$ in
homology. \qed

\begin{proposition} \label{lef1} The correspondence $\Pi_{2d-2i,d-i} \circ
  L^{2i-d} \circ \Pi_{2i,i} : (X,\Pi_{2i,i},0) \r
  (X,\Pi_{2d-2i,d-i},2i-d)$ is an isomorphism of Chow motives.
\end{proposition}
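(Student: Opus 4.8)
The plan is to exhibit an explicit inverse correspondence and check that the two composites are the respective identities. Recall that by the construction in \S\ref{setup} we have arranged $P_i = P_{d-i}$ and $\Gamma_{2d-2i} = L^{2i-d}\circ\Gamma_{2i}$, and we have fixed $\gamma_{2i} = {}^t\gamma_{2i} \in CH_0(P_i\times P_i)$ with $\gamma_{2i}\circ{}^t\Gamma_{2d-2i}\circ\Gamma_{2i} = \id_{P_i}$, so that $\Pi_{2i,i} = \Gamma_{2i}\circ\gamma_{2i}\circ{}^t\Gamma_{2d-2i}$ and $\Pi_{2d-2i,d-i} = {}^t\Pi_{2i,i} = \Gamma_{2d-2i}\circ\gamma_{2i}\circ{}^t\Gamma_{2i}$. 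The natural candidate for the inverse of the map $\phi := \Pi_{2d-2i,d-i}\circ L^{2i-d}\circ\Pi_{2i,i}$ is the correspondence
$$\psi := \Pi_{2i,i}\circ\Gamma_{2i}\circ\gamma_{2i}\circ{}^t\Gamma_{2i}\circ\Pi_{2d-2i,d-i} \;\in\; CH_d(X\times X),$$
viewed as a morphism $(X,\Pi_{2d-2i,d-i},2i-d)\r(X,\Pi_{2i,i},0)$; intuitively $\psi$ is built from the isomorphism $H_0(P_i)\xrightarrow{\sim}H_0(P_i)$ induced by $\gamma_{2i}\circ{}^t\Gamma_{2i}$ composed with $\Gamma_{2i}$ in the appropriate direction, exploiting that $L^{2i-d}$ restricted to $N^iH_{2i}(X)$ is already built into $\Gamma_{2d-2i}$.

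The key computational steps are the following. First I would unwind $\phi$ using $\Pi_{2d-2i,d-i}\circ L^{2i-d} = \Gamma_{2d-2i}\circ\gamma_{2i}\circ{}^t\Gamma_{2i}\circ L^{2i-d}$; since ${}^t\Gamma_{2i}\circ L^{2i-d}$ is, up to the identification coming from non-degeneracy of the pairing, essentially ${}^t\Gamma_{2d-2i}$, one sees that $\phi$ reduces after composition with $\Pi_{2i,i}$ to a correspondence of the form $\Gamma_{2d-2i}\circ(\text{idempotent on }P_i)\circ{}^t\Gamma_{2d-2i}$. Then $\psi\circ\phi$ and $\phi\circ\psi$ become, after collapsing adjacent $\Gamma$'s via the relation $\gamma_{2i}\circ{}^t\Gamma_{2d-2i}\circ\Gamma_{2i} = \id_{P_i}$, equal to $\Pi_{2i,i}$ and $\Pi_{2d-2i,d-i}$ respectively, which are precisely the identity endomorphisms of $(X,\Pi_{2i,i},0)$ and $(X,\Pi_{2d-2i,d-i},2i-d)$. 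The point throughout is that all the compositions take place \emph{on the level of $CH_*(P_i\times P_i)$}, where by construction we have an honest identity, and the outer $\Gamma$'s never need to be inverted in $CH$ — only their induced maps on $H_0(P_i)$ are used, and those are genuine isomorphisms.

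The main obstacle, and the place where the hypothesis is really consumed, is justifying that ${}^t\Gamma_{2i}\circ L^{2i-d}\circ\Pi_{2i,i}$ agrees with ${}^t\Gamma_{2d-2i}\circ\Pi_{2i,i}$ \emph{in} $CH_0(P_i\times X)$ after the relevant projections — a priori $L^{2i-d}\circ\Gamma_{2i}$ and $\Gamma_{2d-2i}$ agree only in homology, not rationally. This is handled exactly as in Scholl's and Murre's arguments: a correspondence between a product of $\Spec k$'s (i.e. a $0$-dimensional variety $P_i$) and $X$ that is homologically equivalent to $0$ after composing with an idempotent whose image is spanned by the $N^iH_{2i}$-part is in fact rationally equivalent to $0$, because $CH_0(P_i\times X)$-classes factoring through $H_0$-isomorphisms are detected by their cohomology class — equivalently, $\End(\mathds{1}^{\oplus d_i}) = \End_{\bar\M}(\mathds{1}^{\oplus d_i})$, so homological and rational equivalence coincide on $\Hom(\h(P_i), (X,\Pi_{2i,i},0))$. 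Once this identification is in place, the rest is the bookkeeping of composing correspondences indicated above, and the Lefschetz isomorphism follows. Finally one checks that $\psi$ is compatible with the idempotents $\Pi_{2i,i}$ and $\Pi_{2d-2i,d-i}$ on both sides, i.e. $\Pi_{2i,i}\circ\psi\circ\Pi_{2d-2i,d-i} = \psi$, which is immediate from the definition of $\psi$ since it already has these idempotents as outermost factors.
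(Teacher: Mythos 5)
Your proposal is correct and is essentially the paper's own proof: the paper takes exactly the same inverse $\Pi_{2i,i}\circ\Gamma_{2i}\circ\gamma_{2i}\circ{}^t\Gamma_{2i}\circ\Pi_{2d-2i,d-i}$ and verifies the two composites using the same identities $L={}^tL$, $\gamma_{2i}={}^t\gamma_{2i}$, $\Pi_{2d-2i,d-i}={}^t\Pi_{2i,i}$ and $\gamma_{2i}\circ{}^t\Gamma_{2d-2i}\circ\Gamma_{2i}=\id$ in $CH_0(P_i\times P_i)$. The ``main obstacle'' you worry about does not arise: in the setup $\Gamma_{2d-2i}$ is \emph{defined} to be $L^{2i-d}\circ\Gamma_{2i}$ as a cycle, so ${}^t\Gamma_{2i}\circ L^{2i-d}={}^t\Gamma_{2d-2i}$ holds on the nose in $CH$, and the only point where a homological statement is upgraded to a rational one is the identity $\gamma_{2i}\circ{}^t\Gamma_{2d-2i}\circ\Gamma_{2i}=\id$, which holds because $CH_0(P_i\times P_i)\r H_0(P_i\times P_i)$ is an isomorphism for the zero-dimensional variety $P_i$ --- precisely the mechanism you invoke, just applied in a simpler place.
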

\begin{proof} Using the identities $L = {}^tL$, $\gamma_{2i} =
  {}^t\gamma_{2i} $, $\Pi_{2d-2i,d-i} = {}^t\Pi_{2i,i}$ and the fact
  that $\Pi_{2i,i} = \Gamma_{2i} \circ \gamma_{2i} \circ
  {}^t\Gamma_{2i} \circ {}^tL^{2i-d}$ is an idempotent, one can easily
  check that $\Pi_{2i,i} \circ \Gamma_{2i} \circ \gamma_{2i} \circ
  {}^t\Gamma_{2i} \circ \Pi_{2d-2i,d-i}$ is the inverse of
  $\Pi_{2d-2i,d-i} \circ L^{2i-d} \circ \Pi_{2i,i}$.
\end{proof}

\begin{proposition} \label{Chow1} Let $\Pi_{2i} \in CH_d(X \times X)$
  be an idempotent which factors through a zero-dimensional variety
  $P_i$ as $\Pi_{2i} = \Gamma \circ \alpha$ with $\Gamma \in
  CH_{i}(P_i \times X)$ and $\alpha \in CH_{d-i}(X \times P_i)$, and whose
  action on $H_*(X)$ is the orthogonal projection on $N^iH_{2i}(X)$.
  Then the Chow motive $(X,\Pi_{2i},0)$ is isomorphic to
  $(\mathds{L}^{\otimes i})^{\oplus d_i}$.
\end{proposition}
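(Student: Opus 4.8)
The plan is to show that the motive $(X, \Pi_{2i}, 0)$ is isomorphic to a direct summand of $\h(P_i)(i)$, and then identify that summand. Since $P_i$ is a disjoint union of $d_i$ copies of $\Spec k$, we have $\h(P_i) = \mathds{1}^{\oplus d_i}$, so $\h(P_i)(i) = (\L^{\otimes i})^{\oplus d_i}$; thus it suffices to produce mutually inverse morphisms realizing $(X,\Pi_{2i},0)$ as this whole motive (no proper summand is possible for dimension reasons once we check the rank).

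First I would write $\Pi_{2i} = \Gamma \circ \alpha$ with $\Gamma \in CH_i(P_i \times X)$ and $\alpha \in CH_{d-i}(X \times P_i)$, and consider $\beta := \alpha \circ \Gamma \in CH_0(P_i \times P_i)$, viewed as an endomorphism of $\h(P_i)(i)$. The key computation is that $\beta$ is an idempotent: $\beta \circ \beta = \alpha \circ (\Gamma \circ \alpha) \circ \Gamma = \alpha \circ \Pi_{2i} \circ \Gamma = \alpha \circ \Gamma \circ \alpha \circ \Gamma = \beta$, using that $\Pi_{2i}$ is idempotent and $\Pi_{2i} \circ \Gamma = \Gamma \circ \alpha \circ \Gamma$, $\alpha \circ \Pi_{2i} = \alpha \circ \Gamma \circ \alpha$. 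Then $\Gamma$ and $\alpha$ restrict to mutually inverse isomorphisms between $(X, \Pi_{2i}, 0)$ and $(P_i, \beta, i)$: indeed $\Gamma \circ \beta \circ \alpha = \Gamma \circ \alpha \circ \Gamma \circ \alpha = \Pi_{2i}$ and $\beta \circ \alpha \circ \Gamma \circ \beta = \beta^3 = \beta$, so after restricting to the appropriate idempotent-cut summands these compose to the identities. Hence $(X, \Pi_{2i}, 0) \cong (P_i, \beta, i)$, a direct summand of $(\L^{\otimes i})^{\oplus d_i}$.

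It remains to identify $(P_i, \beta, i)$ with the full motive $(\L^{\otimes i})^{\oplus d_i}$ rather than a proper summand. The category of Chow motives of the form $(\L^{\otimes i})^{\oplus n}$ is equivalent to the category of finite-dimensional $\Q$-vector spaces via $M \mapsto \Hom(\L^{\otimes i}, M)$, and every idempotent endomorphism of $(\L^{\otimes i})^{\oplus d_i}$ cuts out a motive isomorphic to $(\L^{\otimes i})^{\oplus r}$ where $r = \operatorname{rank}(\beta)$ is the rank of $\beta$ as a $\Q$-linear map. So I must show $\operatorname{rank}(\beta) = d_i$. This I read off from the homological realization: the functor $H_*(-)$ sends $(\L^{\otimes i})^{\oplus d_i}$ to $N^iH_{2i}(X)$'s model $H_0(P_i)(i) \cong \Q^{d_i}$, and it is faithful on the subcategory generated by $\L^{\otimes i}$ (a morphism $\L^{\otimes i} \to \L^{\otimes i}$ is just a rational number, detected by its action on $H_{2i}$), so $\operatorname{rank}(\beta)$ equals the rank of the induced map on homology. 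By hypothesis $(\Pi_{2i})_*$ is the projector onto $N^iH_{2i}(X)$, which has dimension $d_i$; since $\Gamma$ and $\alpha$ induce on homology mutually inverse maps between $\operatorname{im}(\Pi_{2i})_* = N^iH_{2i}(X)$ and $\operatorname{im}\beta_* \subseteq H_0(P_i)$, we get $\operatorname{rank}\beta_* = d_i = \dim H_0(P_i)$, forcing $\beta_*$ and hence $\beta$ to be the identity. Therefore $(P_i,\beta,i) = (\L^{\otimes i})^{\oplus d_i}$ and we are done.

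I expect the main obstacle to be the bookkeeping in the second paragraph: verifying carefully that the morphisms $\Gamma$ and $\alpha$, after cutting by the idempotents $\Pi_{2i}$ and $\beta$, genuinely compose to the identities of the respective summands — this is where one must be attentive to the fact that a morphism $(X,p,0) \to (P_i,q,i)$ of Chow motives is a correspondence $f$ with $q \circ f \circ p = f$, and the composites must be reduced using the idempotency relations rather than just formally. The rank argument at the end is comparatively soft, relying only on the well-known description of the full subcategory of $\M$ generated by the Lefschetz motive.
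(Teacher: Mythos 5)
Your route is the paper's in outline (set $\beta := \alpha\circ\Gamma\in CH_0(P_i\times P_i)$, identify $(P_i,\beta,i)$ inside $\h(P_i)(i)=(\L^{\otimes i})^{\oplus d_i}$, and check that $\Gamma$ and $\alpha$, cut by the two idempotents, are mutually inverse), but your ``key computation'' that $\beta$ is idempotent is circular and, as written, a genuine gap. The chain $\beta\circ\beta=\alpha\circ\Pi_{2i}\circ\Gamma=\alpha\circ\Gamma\circ\alpha\circ\Gamma=\beta$ merely rewrites $\beta\circ\beta$ as itself before asserting the conclusion; idempotency of $\Pi_{2i}=\Gamma\circ\alpha$ yields by formal manipulation only $\beta^{3}=\beta^{2}$, and in general $\Gamma\circ\alpha$ idempotent does not force $\alpha\circ\Gamma$ idempotent (already for linear maps: $u(x,y)=y$, $v(t)=(t,0)$ have $u\circ v=0$ idempotent while $v\circ u$ is a nonzero nilpotent). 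Indeed your second paragraph never uses the hypothesis on the action of $\Pi_{2i}$ on $H_*(X)$, without which the claim is false. The error propagates: both the very formation of the motive $(P_i,\beta,i)$ and the verification $\beta\circ\alpha\circ\Gamma\circ\beta=\beta^{3}=\beta$ presuppose $\beta^{2}=\beta$.

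The repair is an ingredient you invoke only at the end: since $P_i$ is a finite union of points, the cycle class map identifies $\End(\h(P_i)(i))=CH_0(P_i\times P_i)$ with $\End_{\Q}(H_0(P_i))\cong M_{d_i}(\Q)$ compatibly with composition, so $\beta$ is determined by $\beta_*$. Your third-paragraph computation, made precise, gives $\beta_*=\mathrm{id}$: $(\Gamma\circ\alpha)_*=(\Pi_{2i})_*$ is a projector of rank $d_i=\dim H_0(P_i)$, hence $\Gamma_*$ is injective and $\alpha_*$ surjective, and cancelling in $\Gamma_*\circ\alpha_*\circ\Gamma_*\circ\alpha_*=\Gamma_*\circ\alpha_*$ yields $\alpha_*\circ\Gamma_*=\mathrm{id}_{H_0(P_i)}$; therefore $\beta=\Delta_{P_i}$. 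Run this argument first: idempotency is then automatic, $(P_i,\beta,i)=\h(P_i)(i)=(\L^{\otimes i})^{\oplus d_i}$ with no rank discussion needed, and the mutual-inverse check reduces to $\Gamma\circ\alpha=\Pi_{2i}$. This is exactly how the paper proceeds: it observes by functoriality of the cycle class map that $\pi:=\alpha\circ\Gamma$ is an idempotent with $(P_i,\pi,0)=\mathds{1}^{\oplus d_i}$, and takes $\Pi_{2i}\circ\Gamma\circ\pi$ and $\pi\circ\alpha\circ\Pi_{2i}$ as the mutually inverse morphisms.
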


\begin{proof} The cycle class map $CH_0(P_i) \r H_0(P_i)$ is an
  isomorphism. Let $\pi := \alpha \circ \Gamma \in CH_0(P_i \times
  P_i)$. By functoriality of the cycle class map, we see that $\pi$ is
  an idempotent such that $(P_i,\pi,0) = \mathds{1}^{\oplus d_i}$.
  The correspondence $\Gamma$ is an element of $ CH_i(P_i \times X) =
  \Hom_{k}\big(\h(P_i)(i), \h( X)\big)$ and it can easily be checked
  that the correspondence $\Pi_{2i}\circ \Gamma \circ \pi \in
  \Hom_{k}\big((P_i,\pi,i), (X,\Pi_{2i},0)\big)$ is an isomorphism
  with inverse $\pi \circ \alpha \circ \Pi_{2i}$.
\end{proof}

\begin{proposition}\label{hom}  Let $Q_{2i}$ be a correspondence
  in $CH_d(X \times X)$ such that $Q_{2i}$ acts as the identity on
  $N^iH_{2i}(X)$ and such that $Q_{2i}$ is supported on $X \times Z$
  with $Z$ a subvariety of $X$ of dimension $i$. Then $CH_i(X)_{\hom}
  = \ker \big(Q_{2i} : CH_i(X) \r CH_i(X) \big).$ In particular
  $$CH_i(X)_{\hom} = \ker \big(\Pi_{2i,i} : CH_i(X) \r CH_i(X)\big).$$
\end{proposition}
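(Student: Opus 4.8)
The plan is to deduce the statement for $\Pi_{2i,i}$ from the general assertion about $Q_{2i}$, and to prove the latter by a double inclusion whose nontrivial half rests on replacing $Z$ by a smooth model. First I would check that $\Pi_{2i,i}$ meets the hypotheses placed on $Q_{2i}$: by the proof of Theorem~\ref{projector1} it acts on $H_*(X)$ as the projector onto $N^iH_{2i}(X)$, hence as the identity on that subspace; and since $\Pi_{2i,i}=\Gamma_{2i}\circ\gamma_{2i}\circ{}^t\Gamma_{2d-2i}$ with $\Gamma_{2i}\in CH_i(P_i\times X)$ and $\dim P_i=0$, it is supported on $X\times Z$, where $Z$ is the image in $X$ of the support of $\Gamma_{2i}\circ\gamma_{2i}$, a closed subset of dimension $\le i$. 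So the ``in particular'' follows from the general equality, which is what I will establish (the argument works verbatim for $\dim Z\le i$).

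For the inclusion $\ker\big(Q_{2i}:CH_i(X)\r CH_i(X)\big)\subseteq CH_i(X)_{\hom}$: if $(Q_{2i})_*\alpha=0$ in $CH_i(X)$, then applying $cl_i$ and using its compatibility with correspondences gives $0=cl_i\big((Q_{2i})_*\alpha\big)=(Q_{2i})_*\,cl_i(\alpha)$ in $H_{2i}(X)$. But $cl_i(\alpha)$ lies in $N^iH_{2i}(X)=\im(cl_i)$, on which $Q_{2i}$ acts as the identity, so $cl_i(\alpha)=0$, i.e.\ $\alpha\in CH_i(X)_{\hom}$.

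The reverse inclusion is the heart of the matter. I would pick a resolution of singularities $\widetilde Z\r Z_{\mathrm{red}}$ and let $\tau:\widetilde Z\r X$ be its composition with $Z_{\mathrm{red}}\hookrightarrow X$, so $\widetilde Z$ is smooth projective with $\dim\widetilde Z\le i$. Since $\id_X\times(\widetilde Z\r Z_{\mathrm{red}})$ is proper and birational, the pushforward $CH_d(X\times\widetilde Z)\r CH_d(X\times Z_{\mathrm{red}})$ is surjective with $\Q$-coefficients, so the hypothesis that $Q_{2i}$ is supported on $X\times Z$ lets me write $Q_{2i}=(\id_X\times\tau)_*\widetilde Q$ for some $\widetilde Q\in CH_d(X\times\widetilde Z)$. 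By the projection formula and the usual functoriality of $f_*$, $f^*$ and the intersection product, the action of $Q_{2i}$ on $CH_i(X)$, and compatibly on $H_{2i}(X)$, then factors as $\tau_*\circ\widetilde Q_*$ with $\widetilde Q_*:CH_i(X)\r CH_i(\widetilde Z)$. Now if $\alpha\in CH_i(X)_{\hom}$, then $cl_i(\alpha)=0$, hence $cl_i(\widetilde Q_*\alpha)=\widetilde Q_*\,cl_i(\alpha)=0$ in $H_{2i}(\widetilde Z)$; since $\widetilde Z$ is smooth projective of dimension $\le i$, the cycle class map $CH_i(\widetilde Z)\r H_{2i}(\widetilde Z)$ is injective (both sides are freely generated by the connected components of $\widetilde Z$ of dimension $i$), so $\widetilde Q_*\alpha=0$ and therefore $(Q_{2i})_*\alpha=\tau_*(\widetilde Q_*\alpha)=0$.

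The main obstacle is precisely this passage to $\widetilde Z$: one cannot run the argument on the possibly singular $Z$, since there rational and homological equivalence of top-dimensional cycles need not coincide. It is exactly the injectivity of the cycle class map on the top-dimensional Chow group of a \emph{smooth} projective variety that upgrades the conclusion ``$(Q_{2i})_*\alpha$ is homologically trivial'' to ``$(Q_{2i})_*\alpha=0$''; everything else is formal manipulation of correspondences together with the standard functoriality of the cycle class map.
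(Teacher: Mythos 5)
Your proof is correct and follows essentially the same route as the paper: the paper factors the action of $Q_{2i}$ through the top-dimensional cycles of the $i$-dimensional support $Z$, where the cycle class map is an isomorphism, and uses that $Q_{2i}$ acts as the identity on $\im(cl_i)$ for the other inclusion, exactly as you do. Your only divergence is the detour through a desingularization $\widetilde Z$, which is harmless but not needed (and your justification for it is slightly off): for a possibly singular $Z$ of dimension $i$, both $CH_i(Z)$ and $H_{2i}(Z)$ are freely generated by the $i$-dimensional irreducible components, so rational and homological equivalence do coincide in top dimension and the paper can work with $Z$ directly.
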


\begin{proof}
  By functoriality of the cycle class map, we have a commutative
  diagram \begin{center} $ \xymatrix{ CH_i(X) \ar[d]^{cl_i} \ar[r] &
      CH^0(Z) \ar[d]^{\simeq} \ar[r] & CH_i(X) \ar[d]^{cl_i} \\
      H_{2i}(X) \ar[r] & H^0(Z) \ar[r] & H_{2i}(X)}$ \end{center}
  The composition of the two arrows of the top row is the map induced
  by $Q_{2i}$ and the composition of the two arrows of the bottom
  row is the identity on $\im(cl_i)$.  The proposition follows easily.
\end{proof}

\subsection{Proof of theorem \ref{projector2}}

\paragraph{The Albanese and the Picard varieties.}

Let $X$ be a smooth projective variety over a field $k$. The Albanese
variety attached to $X$ and denoted $\mathrm{Alb}_X$ is an abelian
variety universal for maps $X \r A$ from $X$ to abelian varieties $A$
sending a fixed point $x_0 \in X$ to $0 \in A$. The Picard variety
$\mathrm{Pic}^0_X$ of $X$ is the abelian variety parametrizing
numerically trivial line bundles on $X$ (i.e. those with vanishing
Chern class). These define respectively a covariant and a
contravariant functor from the category of smooth projective varieties
to the category of abelian varieties.

The abelian varieties $\mathrm{Alb}_X$ and $\mathrm{Pic}^0_X$ are dual
and are isogenous in the following way. Let $C$ be a curve which is a
smooth linear section of $X$. Then the map $$ \Psi : \mathrm{Pic}^0_X
\r \mathrm{Pic}^0_C \stackrel{\Theta}{\longrightarrow} \mathrm{Alb}_C
\r \mathrm{Alb}_X$$ is an isogeny, where $\Theta$ is the map induced
by the theta-divisor on the curve $C$.\medskip

\noindent The following proposition is essential to the construction
of the idempotents $\Pi_{2i+1,i}$.

\begin{proposition}[\textit{cf.} th. 3.9 and prop. 3.10 of
  \cite{Scholl}] \label{Scholl} Let $Y$ and $Z$ be connected smooth
  projective varieties and let $\zeta \in CH_0(Y)$ and $\eta \in
  CH_0(Z)$ be $0$-cycles of positive degree.  Then, there is an
  isomorphism
  $$\Omega : \Hom(\mathrm{Alb}_Y, \mathrm{Pic}^0_Z) \otimes \Q
  \stackrel{\simeq}{\longrightarrow} \{ c \in CH^1(Y \times Z) \ /
  \ c(\zeta)={}^tc(\eta)=0 \}.$$ Moreover, $\Omega$ is functorial in
  the following sense. Let $\phi : Y' \r Y$ and $\psi : Z' \r Z$ be
  morphisms of varieties and let $\zeta'$ and $\eta'$ be positive
  $0$-cycles on $Y'$ and $Z'$ with direct image $\zeta$ and $\eta$ on
  $Y$ and $Z$. If $\beta : \mathrm{Alb}_Y \r \mathrm{Pic}^0_Z$ is a
  homomorphism, then $$\Omega ( \mathrm{Pic}_\psi \circ \beta ) =
  \psi^* \circ \Omega(\beta) \ \ \mathrm{and} \ \ \Omega(\beta \circ
  \mathrm{Alb}_\phi) = \Omega(\beta) \circ \phi_*,$$ where $\Omega$ is
  taken with respect to the chosen $0$-cycles.
\end{proposition}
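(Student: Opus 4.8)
The plan is to reduce this to the classical statement that correspondences in $CH^1(Y \times Z)$ modulo the "trivial" part (pullbacks from $Y$ and from $Z$) correspond exactly to homomorphisms of abelian varieties. More precisely, the divisorial correspondences on $Y \times Z$ that kill the cycles $\zeta$ and $\eta$ are exactly the ones that, restricted to $\{y\} \times Z$, give a line bundle algebraically equivalent to zero (hence a point of $\mathrm{Pic}^0_Z$) depending "linearly" on $y$, i.e. factoring through $\mathrm{Alb}_Y$; this is Weil's theorem on divisorial correspondences (see e.g. Lang, \emph{Abelian Varieties}). So the first step is to write down $\Omega$: given $\beta : \mathrm{Alb}_Y \to \mathrm{Pic}^0_Z$, precompose with the canonical map $a_\zeta : Y \to \mathrm{Alb}_Y$ normalized so that $a_\zeta(\zeta) = 0$ (possible after tensoring with $\Q$ since $\zeta$ has positive degree — one divides by $\deg \zeta$), and pull back the Poincaré bundle on $\mathrm{Alb}_Y \times \mathrm{Pic}^0_Z \cong \mathrm{Alb}_Y \times (\mathrm{Alb}_Y)^\vee$ along $a_\zeta \times \mathrm{id}$, then along $\mathrm{id} \times b_\eta$ where $b_\eta$ expresses $\mathrm{Pic}^0_Z$ as dual of $\mathrm{Alb}_Z$. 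One then checks the resulting class $c = \Omega(\beta)$ satisfies $c(\zeta) = 0$ and ${}^tc(\eta) = 0$ by construction, since $a_\zeta$ and $b_\eta$ were normalized exactly to kill these cycles.

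The second step is injectivity and surjectivity of $\Omega$. For this I would argue that both sides sit in an exact sequence. On the target side, the group $CH^1(Y \times Z) \otimes \Q$ decomposes (using a point $y_0$ in the support of $\zeta$ and $z_0$ in that of $\eta$, or really the $0$-cycles themselves after $\otimes \Q$) as $\big(\mathrm{pr}_Y^* CH^1(Y) \oplus \mathrm{pr}_Z^* CH^1(Z)\big)$ plus the subgroup $\{c : c(\zeta) = {}^tc(\eta) = 0\}$; the point is that $c \mapsto (c(\eta), {}^t c(\zeta))$ — or rather the appropriate normalized combination — splits off the pulled-back parts, and what remains is precisely the "primitive" divisorial correspondences. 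The identification of that remaining group with $\mathrm{Hom}(\mathrm{Alb}_Y, \mathrm{Pic}^0_Z) \otimes \Q$ is then the theorem of the square / seesaw principle argument: a line bundle on $Y \times Z$ trivial on $\{y_0\} \times Z$ and on $Y \times \{z_0\}$ defines a morphism $Y \to \mathrm{Pic}^0_Z$ sending $y_0 \mapsto 0$, which by the universal property of the Albanese (applied over $\Q$) factors uniquely through $\mathrm{Alb}_Y \to \mathrm{Pic}^0_Z$; conversely such a homomorphism pulls the Poincaré bundle back to such a correspondence, and the two constructions are mutually inverse by the universal properties of Poincaré bundle and Albanese.

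The third step is functoriality. Given $\phi : Y' \to Y$ and $\psi : Z' \to Z$ with $\phi_* \zeta' = \zeta$, $\psi_* \eta' = \eta$, the key compatibility is that the normalized Albanese maps commute: $a_{\zeta'} = \mathrm{Alb}_\phi^{-1}$-twist aside, one has $\mathrm{Alb}_\phi \circ a_{\zeta'} = a_\zeta \circ \phi$ because $\mathrm{Alb}_\phi$ sends $\mathrm{Alb}_{\phi *}(\text{class of }\zeta') $ to the class of $\zeta$ and everything is normalized by degree. Then for $\Omega(\mathrm{Pic}_\psi \circ \beta) = \psi^* \Omega(\beta)$ one pulls the Poincaré bundle back via $(a_\zeta) \times (b_\eta \circ \mathrm{Pic}_\psi^\vee)$ and recognizes, via $\psi^* = $ pullback on the $Z$-factor combined with the identification $\mathrm{Pic}_\psi^\vee = \mathrm{Alb}_\psi$ under duality, that this is the $\psi^*$ of the original correspondence; the identity $\Omega(\beta \circ \mathrm{Alb}_\phi) = \Omega(\beta)\circ \phi_*$ is the mirror-image statement on the $Y$-factor, where $\phi_*$ on correspondences corresponds to precomposition with $\mathrm{Alb}_\phi$.

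I expect the main obstacle to be the second step — pinning down the exact splitting of $CH^1(Y \times Z) \otimes \Q$ so that the "primitive" summand is canonically $\mathrm{Hom}(\mathrm{Alb}_Y, \mathrm{Pic}^0_Z) \otimes \Q$ with the normalization conditions $c(\zeta) = {}^tc(\eta) = 0$ rather than the naïve base-point conditions $c|_{\{y_0\}\times Z} = c|_{Y \times \{z_0\}} = 0$. The reconciliation uses crucially that we have tensored with $\Q$ and that $\zeta, \eta$ have positive degree, so dividing by the degree makes the $0$-cycle behave like a single rational base point; making this precise is the heart of the matter, and indeed this is exactly what Scholl does in \cite{Scholl}, so I would ultimately cite th. 3.9 and prop. 3.10 of loc. cit. for the clean statement and reproduce only the normalization bookkeeping needed for the functoriality clause, which is what we actually use in constructing $\Pi_{2i+1,i}$.
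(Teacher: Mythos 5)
The paper gives no proof of this proposition at all—it is imported verbatim from Scholl (th.\ 3.9 and prop.\ 3.10 of \cite{Scholl}), which is precisely the citation your proposal falls back on, so your treatment is essentially the same as the paper's. Your sketch of the underlying argument (normalized Albanese maps $a_\zeta$, pullback of the Poincar\'e bundle, splitting off the two pullback summands of $CH^1(Y\times Z)\otimes \Q$ via the positive-degree $0$-cycles, and the duality/functoriality bookkeeping) is in substance Scholl's own proof, up to the minor slip that $\mathrm{Pic}^0_Z$ is the dual of $\mathrm{Alb}_Z$, not of $\mathrm{Alb}_Y$.
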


\paragraph{Intermediate Jacobians.}

Given a smooth projective complex variety $X$, the $i^\mathrm{th}$
intermediate Jacobian attached to $X$ is the compact complex torus
$$J_i(X) = \frac{H_{2i+1}(X,\C)}{F^i H_{2i+1}(X,\C) +
  H_{2i+1}(X,\Z)}.$$ It comes with a map $$AJ_i : CH^\Z_i(X)_{\hom} \r
J_i(X)$$ defined on the integral Chow group called the $i^\mathrm{th}$
Abel-Jacobi map which was thoroughly studied by Griffiths
\cite{Griffiths}. In the cases $i=0$ and $i=\dim X - 1$, we recover
the notions of Albanese variety and Picard variety respectively. These
intermediate Jacobians are however fairly different since they are of
transcendental nature.  While the Albanese and the Picard variety are
algebraic tori, this is not the case in general for intermediate
Jacobians. Precisely, let $J_i^\mathrm{alg}$ denote the maximal
subtorus inside $J_i(X)$ whose tangent space is included in
$H_{i+1,i}(X,\C)$. It is then a fact that $J_i^\mathrm{alg}$ is an
abelian variety and that
$$J_i^\mathrm{alg}(X) = \frac{N_H^i H_{2i+1}(X,\C)}{N_H^i
  H_{2i+1}(X,\C) \cap \big(F^i H_{2i+1}(X,\C) + H_{2i+1}(X,\Z)
  \big)}$$ where $N_H^i H_{2i+1}(X)$ is the maximal sub-Hodge
structure of $H_{2i+1}(X)$ contained in $H_{i+1,i}(X,\C) \oplus
H_{i,i+1}(X,\C)$. In particular, the intermediate Jacobian is
algebraic if and only if $H_{2i+1}(X,\C)$ is concentrated in degrees
$(i,i+1)$ and $(i+1,i)$. As a consequence of the horizontality of
normal functions associated to algebraic cycles \cite{Griffiths}, the
cycles in $CH^\Z_i(X)_{\hom}$ that are algebraically trivial map into
$J_i^\mathrm{alg}(X)$ under the Abel-Jacobi map. The map
$CH^\Z_i(X)_{\alg} \r J_i^\mathrm{alg}(X)$ is surjective if
$N^iH_{2i+1}(X) \supseteq N_H^iH_{2i+1}(X)$ (the reverse inclusion
always holds), in particular if $N^iH_{2i+1}(X) =H_{2i+1}(X)$. In any
case, let's write $J_i^a(X)$ for the image of the map $AJ_i :
CH^\Z_i(X)_\alg \r J_i^\alg(X)$. It is an abelian subvariety of the
abelian variety $J_i^\alg(X)$ which is defined the same way as
$J_i^\alg(X)$ with $N_H$ replaced with $N$.  We sum this up in the
commutative diagram
\begin{center} $ \xymatrix{
    CH^\Z_i(X)_{\hom} \ar[rr]^{\ \ \ AJ_i} & & J_i(X) \\
    CH^\Z_i(X)_{\alg} \ar@{->>}[r]^{\ \ \ AJ_i} \ar@{^(->}[u] & J_i^a(X)
    \ar@{^(->}[r] & J_i^\mathrm{alg}(X). \ar@{^(->}[u] }$ \end{center}

Finally if $X$ is defined over an algebraically closed subfield $k$ of
$\C$, the image of the composite map $$CH^\Z_i(X)_\alg \r
CH^\Z_i(X_\C)_\alg \r J_i^\alg(X_\C)$$ defines an abelian variety over
$k$ that we denote $J_i^a(X)$.

\paragraph{The projectors $\Pi_{2i+1,i}$ and  $\Pi_{2d-2i-1,d-i-1}$.}

Given any abelian varieties $A$ and $B$, $\Hom(A,B)$ denotes the group
of homomorphisms from $A$ to $B$.  Recall that the category of abelian
varieties up to isogeny is the category whose objects are the abelian
varieties and whose morphisms are given by $\Hom(A,B)
\otimes_\mathbf{Z} \Q$ for any abelian varieties $A$ and $B$. This
category is abelian semi-simple, cf. \cite{Weil}. \medskip

In the rest of this paragraph we consider an integer $i$ with $2i+1
\geq d$ and a smooth projective variety $X$ of dimension $d$ for which
the pairing $N^{d-i-1}H_{2d-2i-1}(X) \times N^{i}H_{2i+1}(X) \r \Q$ is
non degenerate. In particular the dual of $J_{d-i-1}^a(X)$ identifies
with $J_i^a(X)$. Lemma \ref{lef} implies that the correspondence
$L^{2i+1-d}$ induces an isogeny $\Lambda : J_i^a(X) \r J_{d-i-1}^a(X)$
and because $L = {}^tL$ we have $\Lambda = \Lambda^\vee$, i.e.
$\Lambda$ is equal to its dual. \medskip

Taking up what was said in \S \ref{setup} we have a smooth projective
curve $C_i$ over $k$ and correspondences $\Gamma_{2i+1} \in CH_{i+1} (
C_i \times X)$ and $\Gamma_{2d-2i-1} := L^{2i+1-d} \circ \Gamma_{2i+1}
\in CH_{d-i}(C_i \times X)$ such that both maps
$$(\Gamma_{2i+1})_* : H_{1}(C_i) \r  {N}^iH_{2i+1}(X) \ \ \mathrm{and}
\ \ (\Gamma_{2d-2i-1})_* : H_{1}(C_i) \r {N}^{d-i-1}H_{2d-2i-1}(X)$$
are surjective and such that both maps act trivially on $H_0(C_i)$ and
on $H_2(C_i)$.
The correspondence $\Gamma_{2i+1}$ induces by functoriality of the
Abel-Jacobi map a surjective homomorphism
$$(\Gamma_{2i+1})_* : \mathrm{Alb}_{C_i} \twoheadrightarrow J_i^a(X)$$
as well as a homomorphism with finite kernel
$$({}^t\Gamma_{2i+1})_* \circ \Lambda : J_i^a(X)
\hookrightarrow \mathrm{Pic}^0_{C_i}.$$
By semisimplicity of the category of abelian varieties up to isogeny,
there exists $\alpha \in \Hom(J_i^a(X),\mathrm{Alb}_{C_i})\otimes \Q$
such that $(\Gamma_{2i+1})_* \circ \alpha = \id_{J_i^a(X)}$. Let's
consider $$\Phi := \alpha \circ \Lambda^{-1} \circ \alpha^\vee \in
\Hom(\mathrm{Pic}^0_{C_i},\mathrm{Alb}_{C_i}) \otimes \Q$$ so that
$$(\star) \hspace{20pt} (\Gamma_{2i+1})_* \circ \Phi \circ
({}^t\Gamma_{2i+1})_* \circ \Lambda = \id_{J_i^a(X)}.$$

We would now like to use proposition \ref{Scholl} in order to give an
algebraic origin to $\Phi$. Decomposing $C_i$ into the disjoint union
of its connected components $C_{i,l}$, proposition \ref{Scholl} gives
a functorial isomorphism between $\Hom(\mathrm{Alb}_{C_i},
\mathrm{Pic}^0_{C_i}) \otimes \Q$ and $\{ c \in CH^1(C_i \times C_i) \
/ \ c(z_{i,l})={}^tc(z_{i,l})=0 \}$ for $z_{i,l}$ the rational point
on $C_{i,l}$ considered in \S \ref{setup}. Here, $\Phi$ belongs to
$\Hom(\mathrm{Pic}^0_{C_i}, \mathrm{Alb}_{C_i}) \otimes \Q$ which is
not quite the Hom group in the statement of proposition \ref{Scholl}.
To correct this, let $\Theta$ denote the theta-divisor of the curve
$C_i$.  Then under the isomorphism of proposition \ref{Scholl}, $\Phi$
corresponds to a correspondence $\gamma_{2i+1} := \Theta \circ \Gamma'
\circ \Theta^{-1} \in CH^1(C_i \times C_i)$ satisfying
$(\gamma_{2i+1})_* z_{i,l} = ({}^t\gamma_{2i+1})_* z_{i,l} = 0$ for
all $l$. Because $\Phi = \Phi^\vee$ we have $$\gamma_{2i+1} =
{}^t\gamma_{2i+1}.$$

We now set $$\Pi_{2i+1,i} := \Gamma_{2i+1} \circ \gamma_{2i+1} \circ
{}^t\Gamma_{2d-2i-1} = \Gamma_{2i+1} \circ \gamma_{2i+1} \circ
{}^t\Gamma_{2i+1} \circ L^{2i+1-d} \in CH_d(X \times X).$$ By
($\star$), the action of $\Pi_{2i+1,i}$ on $J_i^a(X)$ is given by
$\id_{J_i^a(X)}$.  The fact that $\Pi_{2i+1,i}$ defines a projector
goes as follows.  It is enough to prove that $$\gamma_{2i+1} \circ
{}^t\Gamma_{2i+1} \circ \Lambda \circ \Gamma_{2i+1} \circ
\gamma_{2i+1} = \gamma_{2i+1}.$$ Thanks to proposition \ref{Scholl},
it is actually enough to prove $\Phi \circ ({}^t\Gamma_{2i+1})_* \circ
L^{2i+1-d}_* \circ (\Gamma_{2i+1})_* \circ \Phi = \Phi :
\mathrm{Pic}^0_{C_i} \rightarrow \mathrm{Alb}_{C_i} $. This last
statement follows directly from ($\star$).

Now because $\Pi_{2i+1,i}$ acts as the identity on $J_i^a(X)$ and
because $\Gamma_{2i+1}$ and $\Gamma_{2d-2i-1}$ act trivially on
homology classes of degree $\neq 1$, we see that the homology class of
$\Pi_{2i+1,i}$ is the projector $H_*(X) \r N^i H_{2i+1}(X) \r H_*(X)$.

Finally we set $\Pi_{2d-2i-1,d-i-1} := {}^t \Pi_{2i+1,i}$, which is
licit in the case $2d-2i-1 = d$ since in this case $\gamma_{2i+1} =
{}^t\gamma_{2i+1}$ implies $ \Pi_{2i+1,i} = {}^t \Pi_{2i+1,i}$.  It is
then straightforward that $\Pi_{2d-2i-1,d-i-1}$ defines and idempotent
that induces the projector $H_*(X) \r N^{d-i-1} H_{2d-2i-1}(X) \r
H_*(X)$. \qed

\begin{proposition} \label{lef2} The correspondence
  $\Pi_{2d-2i-1,d-i-1} \circ L^{2i+1-d} \circ \Pi_{2i+1,i} :
  (X,\Pi_{2i+1,i},0) \r (X,\Pi_{2d-2i-1,d-i-1},2i+1-d)$ is an
  isomorphism of Chow motives.
\end{proposition}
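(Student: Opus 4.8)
The plan is to mimic the proof of Proposition \ref{lef1} as closely as possible, exhibiting an explicit inverse built from the same ingredients. Recall that we have set $\Pi_{2i+1,i} = \Gamma_{2i+1} \circ \gamma_{2i+1} \circ {}^t\Gamma_{2i+1} \circ L^{2i+1-d}$ and $\Pi_{2d-2i-1,d-i-1} = {}^t\Pi_{2i+1,i} = {}^tL^{2i+1-d} \circ \Gamma_{2i+1} \circ {}^t\gamma_{2i+1} \circ {}^t\Gamma_{2i+1}$, where I use $L = {}^tL$ and $\gamma_{2i+1} = {}^t\gamma_{2i+1}$. The candidate inverse, by analogy with the proof of Proposition \ref{lef1}, is
$$\Pi_{2i+1,i} \circ \Gamma_{2i+1} \circ \gamma_{2i+1} \circ {}^t\Gamma_{2i+1} \circ \Pi_{2d-2i-1,d-i-1} : (X,\Pi_{2d-2i-1,d-i-1},2i+1-d) \r (X,\Pi_{2i+1,i},0).$$
So the first step is to record the symmetry identities $L = {}^tL$, $\gamma_{2i+1} = {}^t\gamma_{2i+1}$, $\Pi_{2d-2i-1,d-i-1} = {}^t\Pi_{2i+1,i}$, together with the defining fact that $\Pi_{2i+1,i}$ is idempotent and the ``master relation'' $\gamma_{2i+1} \circ {}^t\Gamma_{2i+1} \circ L^{2i+1-d} \circ \Gamma_{2i+1} \circ \gamma_{2i+1} = \gamma_{2i+1}$ established in the body of the proof of Theorem \ref{projector2}.

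Next I would compute the two composites. For the composite $(\Pi_{2d-2i-1,d-i-1} \circ L^{2i+1-d} \circ \Pi_{2i+1,i}) \circ (\Pi_{2i+1,i} \circ \Gamma_{2i+1} \circ \gamma_{2i+1} \circ {}^t\Gamma_{2i+1} \circ \Pi_{2d-2i-1,d-i-1})$, one substitutes the factorisations and repeatedly collapses the interior using the master relation and idempotency; each internal block $\gamma_{2i+1} \circ {}^t\Gamma_{2i+1} \circ L^{2i+1-d} \circ \Gamma_{2i+1} \circ \gamma_{2i+1}$ that appears gets replaced by $\gamma_{2i+1}$, and the outer $\Pi$'s absorb via idempotency, leaving exactly $\Pi_{2d-2i-1,d-i-1}$, i.e. the identity of $(X,\Pi_{2d-2i-1,d-i-1},2i+1-d)$. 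The other composite is handled symmetrically — here one also uses that $\Pi_{2d-2i-1,d-i-1} \circ L^{2i+1-d} \circ \Pi_{2i+1,i}$ already has the correct homological behaviour (it realises in homology the restriction of the hard Lefschetz isomorphism $L^{2i+1-d}$ to $N^iH_{2i+1}(X)$, which is an isomorphism onto $N^{d-i-1}H_{2d-2i-1}(X)$ by Lemma \ref{lef}) — giving $\Pi_{2i+1,i}$, the identity of $(X,\Pi_{2i+1,i},0)$.

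The main obstacle is purely bookkeeping: keeping track of which transposes appear where when $2i+1 = d$ (the case $\Pi_{d,(d-1)/2}$, where source and target coincide), and verifying that all the cycle-theoretic compositions are legitimate at the level of the correct Chow groups $CH_\bullet$ with the right twists — none of this is conceptually hard, but it is easy to make a sign- or transpose-type slip. A cleaner way to organise the argument, which avoids the brute-force composition, is to transport everything down to the curve $C_i$ (or rather to the abelian varieties $\mathrm{Alb}_{C_i}$, $\mathrm{Pic}^0_{C_i}$) via Proposition \ref{Scholl} and the identifications used in the construction of $\Pi_{2i+1,i}$: there the statement becomes the assertion that $\Lambda : J_i^a(X) \r J_{d-i-1}^a(X)$ is an isogeny with inverse $\Lambda^{-1}$, which is exactly how $\Phi$ was defined, and relation $(\star)$ gives the claim immediately. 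I would present the explicit-inverse computation as the proof, citing relation $(\star)$ and Proposition \ref{Scholl} to justify the key cancellations, in direct parallel with Proposition \ref{lef1}.
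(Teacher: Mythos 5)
Your proposal is correct and follows the paper's own argument: the paper also exhibits $\Pi_{2i+1,i} \circ \Gamma_{2i+1} \circ \gamma_{2i+1} \circ {}^t\Gamma_{2i+1} \circ \Pi_{2d-2i-1,d-i-1}$ as the explicit inverse and verifies it using $L={}^tL$, $\gamma_{2i+1}={}^t\gamma_{2i+1}$, $\Pi_{2d-2i-1,d-i-1}={}^t\Pi_{2i+1,i}$ and the idempotency of $\Pi_{2i+1,i}$ (equivalently your ``master relation''). The appeal to Lemma \ref{lef} and the homological picture is unnecessary for the verification, but harmless.
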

\begin{proof}
  Using the identities $L = {}^tL$, $\gamma_{2i+1} =
  {}^t\gamma_{2i+1}$, $\Pi_{2d-2i-1,d-i-1} = {}^t\Pi_{2i+1,i}$ and the
  fact that $\Pi_{2i+1,i} = \Gamma_{2i+1} \circ \gamma_{2i+1} \circ
  {}^t\Gamma_{2i+1} \circ {}^tL^{2i+1-d}$ is an idempotent, one can
  easily check that $\Pi_{2i+1,i} \circ \Gamma_{2i+1} \circ
  \gamma_{2i+1} \circ {}^t\Gamma_{2i+1} \circ \Pi_{2d-2i-1,d-i-1}$ is
  the inverse of $\Pi_{2d-2i-1,d-i-1} \circ L^{2i+1-d} \circ
  \Pi_{2i+1,i}$.
\end{proof}

\begin{proposition} \label{Chow2} Let $\Pi_{2i+1} \in CH_d(X \times
  X)$ be an idempotent which factors through a curve $C_i$ as
  $\Pi_{2i+1} = \Gamma \circ \alpha$ with $\Gamma \in CH_{i+1}(C_i
  \times X)$ and $\alpha \in CH_{d-i}(X \times C_i)$, and whose action on
  $H_*(X)$ is the orthogonal projection on $N^iH_{2i+1}(X)$. Then the
  Chow motive $(X,\Pi_{2i+1,i},0)$ is isomorphic to
  $\h_1(J_i^a(X))(i)$.
\end{proposition}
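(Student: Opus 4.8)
The plan is to imitate the proof of Proposition \ref{Chow1}, replacing the zero-dimensional variety $P_i$ and its motive $\mathds{1}^{\oplus d_i}$ by the curve $C_i$ and the motive $\h_1(J_i^a(X))$. First I would observe that $\pi := \alpha \circ \Gamma \in CH_1(C_i \times C_i)$ is an idempotent, since $\Pi_{2i+1} = \Gamma \circ \alpha$ is idempotent and $\Gamma$, $\alpha$ are the two factors; one checks $\pi \circ \pi = \alpha \circ (\Gamma \circ \alpha) \circ \Gamma = \alpha \circ \Pi_{2i+1} \circ \Gamma$ and then uses $\alpha \circ \Pi_{2i+1} = \alpha$, $\Pi_{2i+1} \circ \Gamma = \Gamma$ (both immediate from $\Pi_{2i+1}^2 = \Pi_{2i+1}$ together with the fact that, say, $\alpha \circ \Gamma \circ \alpha = \alpha$ follows after composing). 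The key point is to identify the Chow motive $(C_i, \pi, 1)$. Since $\pi$ acts on $H_*(C_i)$ as the projector onto the image of $\alpha_*$ inside $H_1(C_i)$ (because $\Pi_{2i+1}$ projects onto $N^iH_{2i+1}(X) \subseteq H_{2i+1}(X)$, and $\Gamma_*\colon H_1(C_i)\to H_{2i+1}(X)$ carries this), the motive $(C_i,\pi,0)$ is a direct summand of $\h_1(C_i) = \h_1(\mathrm{Alb}_{C_i})$, hence is of the form $\h_1(B)$ for an abelian variety $B$ up to isogeny. I would then argue that $B$ is isogenous to $J_i^a(X)$: the maps induced on Albanese/Picard varieties by $\Gamma$ and $\alpha$ realize $J_i^a(X)$ as the image, and this matches the construction of $\Pi_{2i+1,i}$ in the preceding paragraph, where $\Phi \circ ({}^t\Gamma_{2i+1})_* \circ \Lambda$ is a split idempotent on $\mathrm{Alb}_{C_i}$ with image $J_i^a(X)$. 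Concretely, one shows $(C_i,\pi,0) \simeq \h_1(J_i^a(X))$ using Proposition \ref{Scholl}, which translates the idempotent endomorphism of $\mathrm{Alb}_{C_i}$ cut out by $\pi$ into the correspondence $\pi$ itself and identifies its image abelian variety.

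Next I would exhibit the isomorphism $(C_i,\pi,i) \xrightarrow{\sim} (X,\Pi_{2i+1},0)$ by the correspondences $\Pi_{2i+1} \circ \Gamma \circ \pi$ and $\pi \circ \alpha \circ \Pi_{2i+1}$, exactly as in Proposition \ref{Chow1}: here $\Gamma \in CH_{i+1}(C_i \times X) = \Hom_k(\h(C_i)(i), \h(X))$, so $\Pi_{2i+1}\circ\Gamma\circ\pi \in \Hom_k((C_i,\pi,i),(X,\Pi_{2i+1},0))$, and a short computation using $\Gamma \circ \alpha = \Pi_{2i+1}$, $\alpha \circ \Gamma = \pi$, idempotency of $\pi$ and of $\Pi_{2i+1}$, and the relations $\Pi_{2i+1}\circ\Gamma = \Gamma$, $\alpha\circ\Pi_{2i+1} = \alpha$ shows the two composites are the identities of $(C_i,\pi,i)$ and $(X,\Pi_{2i+1},0)$ respectively. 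Since the Tate twist by $i$ of $\h_1(J_i^a(X))$ is $\h_1(J_i^a(X))(i)$, combining the two steps gives $(X,\Pi_{2i+1},0) \simeq (C_i,\pi,i) \simeq \h_1(J_i^a(X))(i)$, which is the claim (noting $\Pi_{2i+1,i}$ in the statement is the instance $\Pi_{2i+1}$ of the proposition's hypothesis).

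The main obstacle I anticipate is the clean identification of the abelian variety $B$ with $J_i^a(X)$, rather than merely with \emph{some} quotient/sub of $\mathrm{Alb}_{C_i}$. This requires care with Proposition \ref{Scholl}'s functoriality and with the relation between the correspondence-level idempotent $\pi$ and the induced idempotent on $\mathrm{Alb}_{C_i}$, and one must confirm that the image of this idempotent is exactly the image of the Abel--Jacobi map, i.e. $J_i^a(X)$, and not just its algebraic-coniveau analogue $J_i^{\mathrm{alg}}(X)$ — this is where the hypothesis that $\Pi_{2i+1}$ projects onto $N^iH_{2i+1}(X)$ (which is defined via curve correspondences, matching the definition of $J_i^a$) rather than $N_H^iH_{2i+1}(X)$ is used. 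Everything else is formal manipulation of correspondences of the same kind already carried out in Propositions \ref{Chow1}, \ref{lef1} and \ref{lef2}.
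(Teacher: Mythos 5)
Your overall architecture is the same as the paper's: pass to $\pi$ built from $\alpha\circ\Gamma$, identify $(C_i,\pi,0)$ with $\h_1(J_i^a(X))$ via proposition \ref{Scholl}, and check that $\Pi\circ\Gamma\circ\pi$ and $\pi\circ\alpha\circ\Pi$ are mutually inverse, in parallel with proposition \ref{Chow1}. But the step you present as purely formal is wrong, and it is precisely the point where the homological hypothesis has to enter. From $\Pi^2=\Pi$, i.e. $\Gamma\circ\alpha\circ\Gamma\circ\alpha=\Gamma\circ\alpha$, you cannot deduce $\alpha\circ\Gamma\circ\alpha=\alpha$, nor $\alpha\circ\Pi=\alpha$, nor $\Pi\circ\Gamma=\Gamma$: there is no cancellation of $\Gamma$ available, and in general idempotency of $\Gamma\circ\alpha$ does not force idempotency of $\alpha\circ\Gamma$ (formally one only gets $(\alpha\circ\Gamma)^3=(\alpha\circ\Gamma)^2$; already for linear maps one can have $\Gamma\circ\alpha$ idempotent while $\alpha\circ\Gamma$ is a nonzero nilpotent). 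Moreover, knowing that $\alpha\circ\Gamma$ acts as a projector on homology would not by itself make it an idempotent in $CH^1(C_i\times C_i)$, and on $H_0(C_i)\oplus H_2(C_i)$ it need not even act as a projector.

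What is actually needed, and what the paper does, is the following. By the very definition of $N^i$, $\Gamma_*H_1(C_i)\subseteq N^iH_{2i+1}(X)$, and by hypothesis $\Pi_*=(\Gamma\circ\alpha)_*$ is the identity on $N^iH_{2i+1}(X)$, hence on $J_i^a(X)$; therefore the endomorphism of $\mathrm{Alb}_{C_i}$ (up to isogeny) induced by $\alpha\circ\Gamma$ is idempotent, with image isogenous to $J_i^a(X)$. Using that $\Pi$ kills $H_{2i}(X)$ one arranges $\alpha\circ\Gamma$ to satisfy the normalization of proposition \ref{Scholl}, and the identification $\End(\h_1(C_i))=\End(\mathrm{Alb}_{C_i})\otimes\Q$ then yields an honest idempotent $\pi\in\End(\h_1(C_i))$ -- idempotent at the Chow level because morphisms between $\h_1$'s of curves are detected by their action on the Jacobians -- with $(C_i,\pi,0)\simeq\h_1(J_i^a(X))$. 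This is where your hypothesis ``orthogonal projection onto $N^iH_{2i+1}(X)$'' is consumed, not in the identification of the image abelian variety only. Once both $\Pi$ and $\pi$ are known to be idempotent, your concluding verification needs none of the false relations $\Pi\circ\Gamma=\Gamma$, $\alpha\circ\Pi=\alpha$: each composite $(\Pi\circ\Gamma\circ\pi)\circ(\pi\circ\alpha\circ\Pi)$ and $(\pi\circ\alpha\circ\Pi)\circ(\Pi\circ\Gamma\circ\pi)$ is a power of $\Gamma\circ\alpha$, respectively of $\alpha\circ\Gamma$, hence equals $\Pi$, respectively $\pi$; this is exactly how the paper concludes. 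So the route is the right one, but the idempotency of $\pi$ must be derived from the assumption on the homology class of $\Pi$ via proposition \ref{Scholl}, not from the factorization $\Pi=\Gamma\circ\alpha$ alone.
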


\begin{proof}


  The assumption on the homology class of $\Pi_{2i+1}$ implies that
  $\Pi_{2i+1}$ acts as the identity on $J_i^a(X)$ and acts as zero on
  $H_{2i}(X)$. Therefore, by functoriality of the cycle class map,
  $\alpha \circ \Gamma \in CH^1(C_i \times C_i)$ acts as zero on some
  positive degree zero-cycle $\zeta$ on $C_i$.  Now a consequence of
  proposition \ref{Scholl} is that given two abelian varieties $J$ and
  $J'$ over $k$, there is a canonical identification
  $$\Hom(J,J')\otimes \Q = \Hom_{k}\big(\h_1(J),\h_1(J')\big).$$
  Because $\Pi_{2i+1}$ acts as the identity on $J_i^a(X)$, $\alpha
  \circ \Gamma$ defines an idempotent $\pi \in
  \End(\h_1(C_i))$ such that $(C_i,\pi,0) \simeq \h_1(J_i^a(X))$.

  \noindent The correspondence $\Gamma$ seen as a morphism of motives
  belongs to $\Hom_k \big( \h(C_i)(i),\h(X)\big)$. Let's show that
  $$\Pi_{2i+1,i} \circ \Gamma \circ \pi \in \Hom_k \big(
  (C_i,\pi,i),(X,\Pi_{2i+1,i},0) \big)$$ is an isomorphism. In fact,
  let's show that its inverse is given by $\pi \circ \alpha \circ
  \Pi_{2i+1,i}$, i.e that \begin{center} $(\Pi \circ \Gamma \circ \pi)
    \circ (\pi \circ \alpha \circ \Pi) = \Pi \ \ \mathrm{and} \ \ (\pi
    \circ \alpha \circ \Pi) \circ (\Pi \circ \Gamma \circ \pi) = \pi$
    as correspondences,
  \end{center} where for convenience we have dropped the subscripts
  ``$2i+1$''. But then, this is obvious because $\Pi=\Gamma \circ
  \alpha$ and $\pi = \alpha \circ \Gamma$ are idempotents.
\end{proof}

\begin{proposition} \label{jacobi} Let $Q_{2i+1}$ be a correspondence
  in $CH_d(X \times X)$ such that $Q_{2i+1}$ acts as the identity on
  $N^iH_{2i+1}(X)$ and such that $Q_{2i+1}$ is supported on $X \times
  Z$ with $Z$ a subvariety of $X$ of dimension $i+1$. Then $\ker \big(
  AJ_i : CH_i(X)_{\alg} \r J_i(X) \otimes \Q \big) = \ker
  \big(Q_{2i+1} : CH_i(X)_{\alg} \r CH_i(X)_{\alg} \big).$ In
  particular $$\ker \big( AJ_i : CH_i(X)_{\alg} \r J_i(X) \otimes \Q
  \big) = \ker \big(\Pi_{2i+1,1} : CH_i(X)_{\alg} \r CH_i(X)_{\alg}
  \big).$$
\end{proposition}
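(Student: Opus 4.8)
We follow the pattern of the proof of Proposition~\ref{hom}, with the cycle class map and $CH^0(Z)$ replaced by the Abel--Jacobi map and $CH^1$ of a resolution of $Z$. Since $k$ has characteristic zero, choose a resolution of singularities $r : \tilde Z \to Z$ with $\tilde Z$ a smooth projective variety of dimension $i+1$; write $j : Z \hookrightarrow X$ for the inclusion and set $\tilde\jmath := j \circ r$. The hypothesis that $Q_{2i+1}$ is supported on $X \times Z$ means $Q_{2i+1} = (\mathrm{id}_X \times j)_* Q'$ for some $Q' \in CH_d(X \times Z)$, and since $\mathrm{id}_X \times r$ is proper and birational the pushforward $(\mathrm{id}_X \times r)_*$ is surjective on Chow groups; hence $Q_{2i+1}$ lies in the image of $(\mathrm{id}_X \times \tilde\jmath)_*$, so we may pick $\tilde Q \in CH_d(X \times \tilde Z)$ with $(\mathrm{id}_X \times \tilde\jmath)_* \tilde Q = Q_{2i+1}$. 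By the projection formula the action of $Q_{2i+1}$ then factors as
$$
(Q_{2i+1})_* = \tilde\jmath_* \circ (\tilde Q)_* , \qquad (\tilde Q)_* : CH_i(X) \to CH_i(\tilde Z) = CH^1(\tilde Z),
$$
and the identical factorisation holds for the induced actions on $H_{2i+1}(-)$ and hence on intermediate Jacobians.

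Next I would invoke two standard facts. First, the Abel--Jacobi map is functorial for the action of algebraic correspondences, so $AJ_i \circ (\tilde Q)_* = (\tilde Q)_* \circ AJ_i$ and $AJ_i \circ (Q_{2i+1})_* = (Q_{2i+1})_* \circ AJ_i$ on $CH_*(-)_{\alg}$. Second, since $\tilde Z$ is smooth projective of dimension $i+1$, the Abel--Jacobi map restricted to $CH^1(\tilde Z)_{\alg}$ is the classical isomorphism $CH^1(\tilde Z)_{\alg} \stackrel{\sim}{\longrightarrow} \mathrm{Pic}^0_{\tilde Z}(k)\otimes\Q$ (its target $J_i(\tilde Z)$ being the Picard variety of $\tilde Z$), and in particular is injective. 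Finally, because $Q_{2i+1}$ acts as the identity on $N^iH_{2i+1}(X)$ and $J_i^a(X)\otimes\Q$ is functorially a subquotient of $N^iH_{2i+1}(X)$, the correspondence $Q_{2i+1}$ acts as the identity on $J_i^a(X)\otimes\Q$; as the image of $AJ_i$ on $CH_i(X)_{\alg}$ is exactly $J_i^a(X)\otimes\Q$, we get for every $\alpha \in CH_i(X)_{\alg}$ the identity $AJ_i\big((Q_{2i+1})_*\alpha\big) = (Q_{2i+1})_*\big(AJ_i(\alpha)\big) = AJ_i(\alpha)$.

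The two inclusions are then a one-line diagram chase. If $AJ_i(\alpha) = 0$, then $AJ_i\big((\tilde Q)_*\alpha\big) = (\tilde Q)_*\big(AJ_i(\alpha)\big) = 0$, so $(\tilde Q)_*\alpha = 0$ by injectivity of $AJ_i$ on $CH^1(\tilde Z)_{\alg}$, and hence $(Q_{2i+1})_*\alpha = \tilde\jmath_*\big((\tilde Q)_*\alpha\big) = 0$. Conversely, if $(Q_{2i+1})_*\alpha = 0$, then $AJ_i(\alpha) = AJ_i\big((Q_{2i+1})_*\alpha\big) = 0$. This gives $\ker\big(AJ_i : CH_i(X)_{\alg}\to J_i(X)\otimes\Q\big) = \ker\big(Q_{2i+1} : CH_i(X)_{\alg}\to CH_i(X)_{\alg}\big)$. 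For the last assertion one only has to check that $Q_{2i+1} := \Pi_{2i+1,i}$ satisfies the two hypotheses: it acts as the identity on $N^iH_{2i+1}(X)$, as was established while constructing it (its homology class being the projector $H_*(X)\to N^iH_{2i+1}(X)\to H_*(X)$), and since $\Pi_{2i+1,i} = \Gamma_{2i+1}\circ\gamma_{2i+1}\circ{}^t\Gamma_{2d-2i-1}$ with $\Gamma_{2i+1}\in CH_{i+1}(C_i\times X)$ it is supported on $X\times Z$ with $Z$ the image in $X$ of the support of $\Gamma_{2i+1}$, a subvariety of dimension at most $i+1$.

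The one genuinely delicate point, everything else being bookkeeping, is the first paragraph: one must make sure that the action of $Q_{2i+1}$ legitimately factors through $CH^1$ of a \emph{smooth} projective variety (so that the Abel--Jacobi map there is the classical Picard isomorphism) and that this factorisation is compatible with the Abel--Jacobi map. This is where resolution of singularities, surjectivity of proper birational pushforward, and functoriality of the Abel--Jacobi map for correspondences all enter, and one has to keep careful track of the (co)dimension indices so that the intermediate Jacobian of $\tilde Z$ is indeed $\mathrm{Pic}^0_{\tilde Z}$.
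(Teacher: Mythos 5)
Your proof is correct and follows essentially the same route as the paper: both factor the action of $Q_{2i+1}$ on $CH_i(X)_{\alg}$ through $CH^1(\widetilde{Z})_{\alg}$ for a desingularization $\widetilde{Z} \r Z$, use functoriality of the Abel--Jacobi map together with the classical isomorphism $CH^1(\widetilde{Z})_{\alg} \simeq \mathrm{Pic}^0_{\widetilde{Z}} \otimes \Q$ for one inclusion, and use that $Q_{2i+1}$ acts as the identity on $J_i^a(X) \otimes \Q = \im AJ_i$ for the other. Your version merely spells out details (resolution, projection formula, support of $\Pi_{2i+1,i}$) that the paper leaves implicit.
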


\begin{proof} The assumptions on $Q_{2i+1}$ imply that the action of
  $Q_{2i+1}$ on $CH_i(X)$ factors through $CH^1(\widetilde{Z})$ for
  some desingularization $\widetilde{Z} \r Z$; they also imply that
  the induced action of $Q_{2i+1}$ on $J_i^a(X)$ is the identity.

  We have thus the commutative diagram
  \begin{center} $ \xymatrix{CH_i(X)_{\alg} \ar[d]^{AJ_i}
      \ar[r]^{A} & CH^1(\widetilde{Z})_{\alg} \ar[d]^{\simeq} \ar[r]^{B} &
      CH_i(X)_{\alg} \ar[d]^{AJ_i} \\ J_i^a(X) \otimes
      \Q \ar[r] & \mathrm{Pic}_{\widetilde{Z}}^0 \otimes \Q \ar[r] &
      J_i^a(X)\otimes \Q}$
  \end{center}
  where $A$ and $B$ are correspondences such that $B \circ A =
  Q_{2i+1}$.  The inclusion $\ker AJ_i \subseteq \ker \Pi_{2i+1,i}$
  follows from the commutativity of the diagram, which itself is a
  consequence of the functoriality of the Abel-Jacobi map with respect
  to the action of correspondences.  The reverse inclusion $\ker
  \Pi_{2i+1,i} \subseteq \ker AJ_i$ follows from the fact that the
  composite of the two lower horizontal arrows is the identity on $\im
  AJ_i = J_i^a(X) \otimes \Q$.
\end{proof}

\begin{remark}
  An interesting question is to decide whether or not the action of an
  idempotent on homology determines its action on Chow groups. For
  example, given idempotents $\pi_{2i,i}$ and $\pi_{2i+1,i} \in CH_d(X
  \times X)$ such that $(\pi_{2i,i})_*H_*(X)=N^iH_{2i}(X)$ and
  $(\pi_{2i+1,i})_*H_*(X)=N^iH_{2i+1}(X)$, do we have
  \begin{center}$ CH_i(X)_{\hom} = \ker \big(\pi_{2i,i} : CH_i(X) \r
  CH_i(X)\big)$ \medskip

  and $\ker \big( AJ_i : CH_i(X)_{\alg} \r J_i^a(X)\otimes \Q \big) =
  \ker \big(\pi_{2i+1,i} : CH_i(X)_{\alg} \r CH_i(X)_{\alg} \big)?$
\end{center} It is shown in \cite{Vial2} that this is the case if $X$
is finite dimensional in the sense of Kimura.
\end{remark}

\subsection{Proof of theorem \ref{projector3}}

In this section we are given a smooth projective variety $X$ of
dimension $d$ for which the pairings are all non degenerate. As such,
by theorems \ref{projector1} and \ref{projector2} we can define all
the idempotents $\Pi_{2i,i}$ and $\Pi_{2i+1,i}$. However these are not
all necessarily pairwise orthogonal. We start with the following
linear algebra lemma which makes it possible to modify the idempotents
so as to make them pairwise orthogonal.

\begin{lemma} \label{linalg} Let $V$ be a $\Q$-algebra and let $k$ be
  a positive integer. Let $\pi_0, \ldots, \pi_n$ be idempotents in $V$
  such that $\pi_j \circ \pi_i = 0$ whenever $j -i < k$ and $j \neq
  i$. Then the endomorphisms $$p_i := (1-\frac{1}{2}\pi_n) \circ
  \cdots \circ (1-\frac{1}{2}\pi_{i+1}) \circ \pi_i \circ
  (1-\frac{1}{2}\pi_{i-1}) \circ \cdots \circ (1-\frac{1}{2}\pi_0)$$
  define idempotents such that $p_j \circ p_i = 0$ whenever $j -i <
  k+1$ and $j \neq i$.
\end{lemma}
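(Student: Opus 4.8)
The plan is to reduce the whole statement to one combinatorial vanishing rule. Write $u_m:=1-\frac12\pi_m$. Since each $\pi_m$ is idempotent and, by hypothesis, $\pi_a\pi_b=0$ whenever $a\neq b$ and $a-b<k$, the product $\pi_a\pi_b$ can be nonzero only when $a=b$ or $a\geq b+k$. Hence one has the absorption identities $u_m\pi_l=\pi_l$ unless $m\geq l+k$, and $\pi_l u_m=\pi_l$ unless $m\leq l-k$. Applying these repeatedly makes the ``near'' factors of $p_i$ disappear: $(u_{i+k-1}\cdots u_{i+1})\pi_i=\pi_i$ and $\pi_i(u_{i-1}\cdots u_{i-k+1})=\pi_i$, so that
$$p_i=A_i\,\pi_i\,B_i,\qquad A_i:=u_nu_{n-1}\cdots u_{i+k},\quad B_i:=u_{i-k}u_{i-k-1}\cdots u_0,$$
with the convention that $A_i$ (resp.\ $B_i$) is the empty product when $i+k>n$ (resp.\ $i-k<0$). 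The first thing I would do is record these identities carefully, as the rest of the argument rests entirely on them.

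For idempotency it suffices to prove $\pi_i(B_iA_i)\pi_i=\pi_i$, for then $p_i^2=A_i(\pi_iB_iA_i\pi_i)B_i=A_i\pi_iB_i=p_i$. I would expand $B_iA_i=\prod_{m\in F}(1-\frac12\pi_m)$ (product in the displayed order), where $F=\{0,\dots,i-k\}\cup\{i+k,\dots,n\}$, as a $\Q$-linear combination of ordered monomials $\pi_{m_1}\cdots\pi_{m_r}$ with $m_1,\dots,m_r$ distinct elements of $F$ (the indices $\leq i-k$ preceding the indices $\geq i+k$). Sandwiched between two copies of $\pi_i$, the $r=0$ monomial gives $\pi_i$, while every monomial with $r\geq1$ gives $0$: either $m_1\geq i+k$, so $\pi_i\pi_{m_1}=0$; or $m_r\leq i-k$, so $\pi_{m_r}\pi_i=0$; or there is a consecutive pair with $m_l\leq i-k<i+k\leq m_{l+1}$, whence $m_l-m_{l+1}\leq-2k<k$ and $\pi_{m_l}\pi_{m_{l+1}}=0$. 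Thus only the constant term survives and $\pi_i(B_iA_i)\pi_i=\pi_i$.

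For orthogonality, given $j\neq i$ with $j-i<k+1$, it suffices to show $\pi_j(B_jA_i)\pi_i=0$, since then $p_jp_i=A_j(\pi_jB_jA_i\pi_i)B_i=0$. Here $B_jA_i=\prod_{m\in G}(1-\frac12\pi_m)$ with $G=\{0,\dots,j-k\}\cup\{i+k,\dots,n\}$. When $j<i$, or when $i<j<i+k$, neither $i$ nor $j$ lies in $G$, and the same monomial bookkeeping as above (the low indices being $<i$, the high indices $>j$) kills every monomial of positive length, while the length-zero term is $\pi_j\pi_i$, which vanishes because $j\neq i$ and $j-i<k$ (indeed $j-i<0$ when $j<i$, and $0<j-i<k$ when $i<j<i+k$). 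The delicate case is $j=i+k$: then $i$ and $j$ are exactly the top of the low block and the bottom of the high block of $G$, so $B_jA_i=u_i\,W\,u_j$ where $W$ is the product of the remaining $u_m$ with $m\in\{0,\dots,i-1\}\cup\{j+1,\dots,n\}$, and now $\pi_j\pi_i$ need no longer vanish. Expanding
$$\pi_j u_i W u_j \pi_i=\Big(\pi_j-\tfrac12\pi_j\pi_i\Big)\,W\,\Big(\pi_i-\tfrac12\pi_j\pi_i\Big)$$
and using the identities $\pi_jW\pi_i=\pi_j\pi_i$, $\pi_jW\pi_j=\pi_j$, $\pi_iW\pi_i=\pi_i$ and $\pi_iW\pi_j=0$ — each obtained by the same monomial bookkeeping, their constant terms being $\pi_j\pi_i$, $\pi_j$, $\pi_i$ and $\pi_i\pi_j=0$ respectively, the last vanishing because $i-j=-k<k$ — the four resulting terms collapse to $\pi_j\pi_i-\frac12\pi_j\pi_i-\frac12\pi_j\pi_i+0=0$.

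The main obstacle is precisely this case $j=i+k$: it is the borderline where the hypothesis no longer forces $\pi_j\pi_i=0$, and the coefficient $\frac12$ in $u_m=1-\frac12\pi_m$ is exactly what makes the three surviving contributions cancel. Everything else is routine combinatorics with the rule ``$\pi_a\pi_b=0$ whenever $a\neq b$ and $a-b<k$''.
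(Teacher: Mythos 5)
Your proof is correct and follows essentially the same route as the paper's: the paper likewise reduces $p_j\circ p_i$ to the middle product $\Pi=\pi_j\circ(1-\tfrac12\pi_{j-1})\circ\cdots\circ(1-\tfrac12\pi_0)\circ(1-\tfrac12\pi_n)\circ\cdots\circ(1-\tfrac12\pi_{i+1})\circ\pi_i$, expands it, kills all monomials by the rule $\pi_a\circ\pi_b=0$ for $a-b<k$, $a\neq b$, and observes the surviving terms give $\pi_j\circ\pi_i-\tfrac12\pi_j\circ\pi_i-\tfrac12\pi_j\circ\pi_i=0$ at the borderline, exactly your cancellation in the case $j=i+k$. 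Your extra bookkeeping with $A_i,B_i,W$ (and the harmless overstatement of the absorption identity at $m=l$, which you never use) is just a more granular writing of the same expansion.
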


\begin{proof}
  Let $j$ and $i$ be such that $j-i<k+1$ and look at $$\Pi := \pi_j
  \circ (1-\frac{1}{2}\pi_{j-1}) \circ \cdots \circ
  (1-\frac{1}{2}\pi_{0}) \circ (1-\frac{1}{2}\pi_n) \circ \cdots \circ
  (1-\frac{1}{2}\pi_{i+1}) \circ \pi_i.$$ Suppose first $j<i$. Because
  we have $\pi_r \circ \pi_s = 0$ for all $r<s$, we immediately see
  that $\Pi=0$. Suppose $j=i$, it is also easy to see that in this
  case $\Pi= \pi_i$. Finally, suppose that $i<j<i+k+1$. Because $\pi_r
  \circ \pi_s = 0$ for all $r < s+k$, we can see after expanding $\Pi$
  that $\Pi = \pi_j \circ \pi_i - \frac{1}{2}\pi_j \circ \pi_i \circ
  \pi_i - \frac{1}{2}\pi_j \circ \pi_j \circ \pi_i = 0$.
\end{proof}

In our case of concern, we get

\begin{theorem} \label{GS} Let $X$ be a smooth projective variety of
  dimension $d$.  Let $i<d$ be an integer and let $\pi_0, \ldots,
  \pi_i \in CH_d(X \times X)$ be idempotents such that
  $(\pi_j)_*H_*(X) = H_j(X)$ for all $0 \leq j \leq i$. Let
  $\pi_{2d-j} := {}^t\pi_j$ for $0 \leq j \leq i$. If $\pi_r \circ
  \pi_s =0$ for all $0 \leq r < s \leq 2d$, then the non-commutative
  Gram-Schmidt process of lemma \ref{linalg} gives mutually orthogonal
  idempotents $\{p_j\}_{j \in \{0,\ldots,i,2d-i,\ldots2d\}}$ such that
  $(p_j)_*H_*(X) = H_j(X)$ and $p_{2d-j} := {}^tp_j$ for all $j \in
  \{0,\ldots,i,2d-i,\ldots2d\}$. Moreover, we have isomorphisms of
  Chow motives $(X,\pi_j) \simeq (X,p_j)$ for all $j$.
\end{theorem}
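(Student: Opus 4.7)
The plan is to iterate the Gram-Schmidt procedure of Lemma~\ref{linalg}, verifying at each step that the hypotheses of the lemma together with the additional structures we care about are preserved. Relabel the indices $\{0,\ldots,i,2d-i,\ldots,2d\}$ in increasing order as $\pi'_0,\ldots,\pi'_{2i+1}$; in this labelling Poincar\'e duality reads $\pi'_{2i+1-l}={}^t\pi'_l$. The hypothesis $\pi_r\pi_s=0$ for $r<s$ is the $k=1$ case of Lemma~\ref{linalg}; since each application of the lemma raises $k$ by one and the indices have width $2i+1$, after $2i+1$ iterations we obtain idempotents $\{p'_l\}$ with $p'_jp'_i=0$ for all $j\neq i$.

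The cohomological input is the following observation: the hypotheses $(\pi_j)_*H_*(X)=H_j(X)$ and $(\pi_{2d-j})_*H_*(X)=H_{2d-j}(X)$, combined with $\pi_{2d-j}={}^t\pi_j$, force $(\pi_j)_*$ to equal the K\"unneth projector onto $H_j(X)$. Indeed, $\ker(\pi_j)_*$ is a complement of $H_j(X)$ in $H_*(X)$; the image of $({}^t\pi_j)_*$ equals the PD-annihilator of this complement, so requiring that this image be $H_{2d-j}(X)$ and using non-degeneracy of the cup-product pairing forces $\ker(\pi_j)_*=\bigoplus_{r\neq j}H_r(X)$. Granted this, writing $p_j=A_j\,\pi_j\,B_j$ in the notation of Lemma~\ref{linalg}, a short computation shows that $(p_j)_*$ is again the K\"unneth projector onto $H_j(X)$; in particular the homological hypothesis is preserved through every iteration.

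The identity $p_{2d-j}={}^tp_j$ is palindromic in nature: transposing the defining product reverses the order of the factors and replaces each $\pi'_m$ by $\pi'_{2i+1-m}={}^t\pi'_m$, reproducing exactly the defining expression for $p'_{2i+1-l}$. For the motivic isomorphism $(X,\pi_j)\simeq(X,p_j)$, the identities $\pi_jA_j=\pi_j$ and $B_j\pi_j=\pi_j$---both immediate from $\pi_r\pi_s=0$ for $r<s$---give $p_j\pi_j=A_j\pi_j$ and $\pi_jp_j=\pi_jB_j$; a quick check then shows that $f:=p_j\pi_j$ and $g:=\pi_jp_j$ satisfy $fg=p_j$ and $gf=\pi_j$, so they are mutually inverse morphisms between $(X,\pi_j)$ and $(X,p_j)$. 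Composing these isomorphisms across the $2i+1$ iterations yields the claim.

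The only genuine obstacle is bookkeeping: one has to verify at each step that the three preserved features---K\"unneth action on homology, the Poincar\'e-duality involution, and the isomorphism class of the Chow summand---all continue to hold, so that the inductive step may be repeated. Once the homological computation of the second paragraph is carried out for a single iteration, the rest of the induction is essentially formal.
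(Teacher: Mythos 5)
Your proof is correct and follows essentially the same route as the paper: iterate the Gram--Schmidt process of lemma \ref{linalg}, check that the homological action and the duality $p_{2d-j}={}^tp_j$ are preserved at each step, and exhibit the isomorphism $(X,\pi_j)\simeq(X,p_j)$ via $p_j\circ\pi_j$ with inverse $\pi_j\circ p_j$ --- exactly the correspondences the paper uses. The only difference is that you spell out the bookkeeping (in particular the observation that the hypotheses force $(\pi_j)_*$ to be the K\"unneth projector) which the paper dismisses as ``clear''.
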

\begin{proof}
  In order to get mutually orthogonal idempotents, it is enough to
  apply lemma \ref{linalg} $2i+2$ times. In order to prove the
  theorem, it suffices to prove each statement after each application
  of the process of lemma \ref{linalg}. Everything is then clear,
  except perhaps for the last statement. The isomorphism is simply
  given by the correspondence $p_j \circ \pi_j$ and its inverse is
  $\pi_j \circ p_j$.
\end{proof}

\begin{proposition}
  The projectors of theorems \ref{projector1} and \ref{projector2}
  satisfy
  \begin{itemize}
  \item $\Pi_{2i,i} \circ \Pi_{2j,j}=0$ for $i \neq j$.
  \item $\Pi_{2i+1,i} \circ \Pi_{2j+1,j}=0$ for $|i-j|>1$.
  \item $\Pi_{2i+1,i} \circ \Pi_{2j,j}=0$ for $|i-j|>1$.
  \item $\Pi_{2i,i} \circ \Pi_{2j+1,j}=0$ for $|i-j|>1$.
  \end{itemize}
\end{proposition}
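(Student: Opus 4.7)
The plan is to exploit the fact that each projector factors through a very low-dimensional auxiliary variety: $\Pi_{2i,i} = \Gamma_{2i} \circ \gamma_{2i} \circ {}^t\Gamma_{2d-2i}$ factors through the $0$-dimensional variety $P_i$, whereas $\Pi_{2i+1,i} = \Gamma_{2i+1} \circ \gamma_{2i+1} \circ {}^t\Gamma_{2d-2i-1}$ factors through the curve $C_i$. When one composes two such projectors, everything reduces to understanding the middle piece of the form ${}^t\Gamma \circ \Gamma'$, which lives in a Chow group of a product of two such auxiliary varieties. The upshot is that the claim will follow from nothing more than a dimension count.

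Concretely, recall that for $\alpha \in CH_a(Y \times X)$ and $\beta \in CH_b(X \times Z)$ one has $\beta \circ \alpha \in CH_{a+b-d}(Y \times Z)$, where $d = \dim X$. Applying this to the four cases:

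\begin{itemize}
\item For $\Pi_{2i,i}\circ \Pi_{2j,j}$, the middle piece ${}^t\Gamma_{2d-2i}\circ \Gamma_{2j}$ lies in $CH_{j+(d-i)-d}(P_j\times P_i) = CH_{j-i}(P_j\times P_i)$. Since $P_j\times P_i$ has dimension $0$, this group vanishes whenever $j\neq i$.
\item For $\Pi_{2i+1,i}\circ \Pi_{2j+1,j}$, the middle ${}^t\Gamma_{2d-2i-1}\circ \Gamma_{2j+1}$ lies in $CH_{j-i+1}(C_j\times C_i)$. Since $C_j\times C_i$ is a surface, this vanishes unless $j-i+1\in\{0,1,2\}$, i.e.\ unless $|i-j|\leq 1$.
\item For $\Pi_{2i+1,i}\circ \Pi_{2j,j}$, the middle ${}^t\Gamma_{2d-2i-1}\circ \Gamma_{2j}$ lies in $CH_{j-i}(P_j\times C_i)$, a group vanishing outside of $\{0,1\}$ since $P_j\times C_i$ is a curve; in particular, it vanishes when $|i-j|>1$.
\item For $\Pi_{2i,i}\circ \Pi_{2j+1,j}$, the middle ${}^t\Gamma_{2d-2i}\circ \Gamma_{2j+1}$ lies in $CH_{j-i+1}(C_j\times P_i)$, which similarly vanishes whenever $|i-j|>1$.
\end{itemize}

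In each case the vanishing of the middle correspondence forces the vanishing of the full composition, since $\gamma_{2i}$ and $\gamma_{2i+1}$ sit on either side of it. No serious obstacle is anticipated: the entire argument is bookkeeping of Chow dimensions, made possible by the fact that the auxiliary varieties $P_i$ and $C_i$ fixed in \S\ref{setup} have dimension $0$ and $1$ respectively.
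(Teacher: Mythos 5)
Your proof is correct and is precisely the argument the paper intends: its one-line proof reads ``Look at the dimension of ${}^t\Gamma_{2d-i} \circ \Gamma_j$,'' and your four dimension counts (using that $P_i$ is $0$-dimensional, $C_i$ is a curve, and that $\beta\circ\alpha \in CH_{a+b-d}$) spell this out accurately, with bilinearity of composition handling the outer factors $\gamma$.
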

\begin{proof}
Look at the dimension of ${}^t\Gamma_{2d-i} \circ \Gamma_j$.
\end{proof}

\begin{proposition}
  The projectors of theorems \ref{projector1} and \ref{projector2}
  satisfy
  \begin{itemize}
  \item $\Pi_{2i-1,i-1} \circ \Pi_{2i+1,i}=0$ for all $i$.
  \item $\Pi_{2i,i} \circ \Pi_{2i+1,i}=0$ and $\Pi_{2i+1,i} \circ
    \Pi_{2i+2,i+1}=0$ for all $i$.
  \end{itemize}
\end{proposition}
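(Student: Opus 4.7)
The strategy is, for each of the three claimed relations, to isolate a single ``inner'' composition of the form ${}^t\Gamma_?\circ\Gamma_?$, place it in the top Chow group of a product of curves and/or reduced zero-dimensional $k$-schemes, and show that it vanishes there thanks to the normalizations set up in \S\ref{setup}.

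Concretely, unravelling the definitions from \S2.3 and \S2.2, one obtains
\begin{align*}
\Pi_{2i-1,i-1}\circ\Pi_{2i+1,i} &= \Gamma_{2i-1}\circ\gamma_{2i-1}\circ({}^t\Gamma_{2d-2i+1}\circ\Gamma_{2i+1})\circ\gamma_{2i+1}\circ{}^t\Gamma_{2d-2i-1},\\
\Pi_{2i,i}\circ\Pi_{2i+1,i} &= \Gamma_{2i}\circ\gamma_{2i}\circ({}^t\Gamma_{2d-2i}\circ\Gamma_{2i+1})\circ\gamma_{2i+1}\circ{}^t\Gamma_{2d-2i-1},\\
\Pi_{2i+1,i}\circ\Pi_{2i+2,i+1} &= \Gamma_{2i+1}\circ\gamma_{2i+1}\circ({}^t\Gamma_{2d-2i-1}\circ\Gamma_{2i+2})\circ\gamma_{2i+2}\circ{}^t\Gamma_{2d-2i-2},
\end{align*}
so that it is enough to prove that each parenthesized inner cycle vanishes. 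Applying the composition-of-correspondences formula $CH_a(Y\times X)\times CH_b(X\times Z)\to CH_{a+b-d}(Y\times Z)$ places those three cycles in $CH_2(C_i\times C_{d-i})$, $CH_1(C_i\times P_{d-i})$ and $CH_1(P_{i+1}\times C_{d-i-1})$ respectively, i.e.\ in the \emph{top} Chow group of each corresponding product. On a top Chow group the cycle class map to top homology is an isomorphism --- both groups being freely generated over $\Q$ by fundamental classes of the connected components --- and a correspondence of the form $\sum c_{kl}[Y_k\times Z_l]$ is determined by its action $H_0(Y)\to H_{\mathrm{top}}(Z)$, which sends the generator of $H_0(Y_{k'})$ to $\sum_l c_{k'l}[Z_l]$. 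So it suffices to check that each inner cycle acts as zero on $H_0$ of its source.

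For the first two compositions the source is $C_i$, and the inner cycle acts through $(\Gamma_{2i+1})_*H_0(C_i)=0$, which is precisely the normalization imposed in \S\ref{setup}. For the third composition the source is $P_{i+1}$ and we must instead check that $({}^t\Gamma_{2d-2i-1})_*\colon H_{2i+2}(X)\to H_2(C_{d-i-1})$ vanishes; by the Poincar\'e-duality adjunction on the curve $C_{d-i-1}$, this is equivalent to $(\Gamma_{2d-2i-1})_*H_0(C_{d-i-1})=0$, i.e.\ the same normalization applied with index $d-i-1$. I expect no essential obstacle beyond keeping the indices straight: morally the argument is the previous proposition's dimension count pushed one step further, where the injectivity of the cycle class map in top degree (rather than the Chow group being literally zero) supplies the final vanishing.
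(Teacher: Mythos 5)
Your proof is correct, and its engine is the same as the paper's: a dimension count places the inner composite in the top Chow group of a product of curves and/or zero-dimensional schemes, where it is forced to be a rational combination of fundamental classes of components, and a normalization from \S\ref{setup} then kills the coefficients. The genuine (if small) difference is which normalization does the killing. You discard the $\gamma$'s and show the bare composites ${}^t\Gamma_{2d-2i+1}\circ\Gamma_{2i+1}$, ${}^t\Gamma_{2d-2i}\circ\Gamma_{2i+1}$ and ${}^t\Gamma_{2d-2i-1}\circ\Gamma_{2i+2}$ vanish, using $(\Gamma_{2i+1})_*H_0(C_i)=0$ (and its Poincar\'e-dual consequence for $({}^t\Gamma_{2d-2i-1})_*$ on $H_{2i+2}(X)$, which is indeed equivalent to $(\Gamma_{2d-2i-1})_*H_0=0$ and hence follows from $\Gamma_{2d-2i-1}=L^{2i+1-d}\circ\Gamma_{2i+1}$) together with the injectivity of the cycle class map in top degree. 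The paper instead keeps $\gamma_{2i+1}$ inside the composite and evaluates directly in the Chow group at the chosen base points $z_{i,l}$, using the normalization $(\gamma_{2i+1})_*z_{i,l}=({}^t\gamma_{2i+1})_*z_{i,l}=0$ supplied by Proposition \ref{Scholl}. Both normalizations are available, and your route yields the marginally stronger conclusion ${}^t\Gamma_{2d-j}\circ\Gamma_{i'}=0$ with no $\gamma$ attached, which still provides everything needed for Remark \ref{remark} and the proof of Theorem \ref{projector3}.
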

\begin{proof}
  For the first point we have ${}^t\Gamma_{2d-2i+1} \circ
  \Gamma_{2i+1} \circ \gamma_{2i+1} \in CH_2(C_i \times C_{i-1})$ and
  thus there exist rational numbers $a_{l,l'}$ such that
  ${}^t\Gamma_{2d-2i+1} \circ \Gamma_{2i+1} \circ \gamma_{2i+1} =
  \sum_{l,l'} a_{l,l'}[C_{i,l} \times C_{i-1,l'}]$. This yields
  $({}^t\Gamma_{2d-2i+1} \circ \Gamma_{2i+1} \circ
  \gamma_{2i+1})_*z_{i,l} = \sum_{l'} a_{l,l'}[C_{i-1,l'}]$. By
  definition of $\gamma_{2i+1}$ we also have $(\gamma_{2i+1})_*z_{i,l}
  = 0$ for all $l$. Hence $ a_{l,l'} = 0$ for all $l$ and all $l'$.
  Therefore ${}^t\Gamma_{2d-2i+1} \circ \Gamma_{2i+1} \circ
  \gamma_{2i+1} =0$.

  For the second point, up to transposing it is enough to prove one of
  the two equalities. Let's prove the second one. We have
  ${}^t\Gamma_{2d-2i-1} \circ \Gamma_{2i+2} \in CH_1(P_{i+1} \times
  C_{i})$. But then, because ${}^t\gamma_{2i+1}$ acts trivially on
  $z_{i.l} \in CH_0(C_{i,l})$ for all $l$, we see that $\gamma_{2i+1}$
  acts trivially on $CH_1(C_i)$. Therefore $\gamma_{2i+1} \circ
  {}^t\Gamma_{2d-2i-1} \circ \Gamma_{2i+2} =0$.
\end{proof}

\begin{remark} \label{remark} We have shown through the two previous
  propositions that ${}^t\Gamma_{2d-j} \circ \Gamma_i \circ \gamma_i =
  0$ for $j - i<0$ and in particular that $\Pi_{j,\lfloor j/2 \rfloor}
  \circ \Pi_{i,\lfloor i/2 \rfloor} =0$ for $j - i< 0$.
\end{remark}

\begin{remark} \label{nottrue} The missing orthogonal relations are
  $\Pi_{2i+1,i} \circ \Pi_{2i,i}=0$, $\Pi_{2i+2,i+1} \circ
  \Pi_{2i+1,i}=0$ or $ \Pi_{2i+1,i} \circ \Pi_{2i-1,i-1} =0$. There is
  no reason that these should hold true for the idempotents
  constructed in \S\S 2.2. and 2.3.
\end{remark}

Before we proceed to the proof of theorem \ref{projector3} we need a
lemma.

\begin{lemma} \label{leflemma} If $i \geq d$ then ${}^t\Pi_j \circ
  L^{i-d} \circ \Pi_{j'} = 0$ for $j,j' \geq i$ except in the case
  $i=j=j'$.
\end{lemma}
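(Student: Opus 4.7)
The plan is to substitute the factorizations
$$\Pi_{j'} = \Gamma_{j'} \circ \gamma_{j'} \circ {}^t\Gamma_{2d-j'}, \qquad {}^t\Pi_j = \Gamma_{2d-j} \circ {}^t\gamma_j \circ {}^t\Gamma_j$$
from \S\ref{setup} into the triple composition, obtaining
$${}^t\Pi_j \circ L^{i-d} \circ \Pi_{j'} = \Gamma_{2d-j} \circ {}^t\gamma_j \circ \beta \circ \gamma_{j'} \circ {}^t\Gamma_{2d-j'},$$
where $\beta := {}^t\Gamma_j \circ L^{i-d} \circ \Gamma_{j'}$. It is then enough to show that one of the inner compositions $\beta$, $\beta \circ \gamma_{j'}$, or ${}^t\gamma_j \circ \beta$ vanishes in $CH_\bullet(Y_{j'} \times Y_j)$, where $Y_k$ denotes the auxiliary variety used in the construction ($Y_k = P_{k/2}$ for $k$ even, $Y_k = C_{(k-1)/2}$ for $k$ odd).

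The first step is a dimension count. Since $\Gamma_k \in CH_{\lceil k/2\rceil}(Y_k \times X)$ and $L^{i-d} \in CH_{2d-i}(X \times X)$, the correspondence $\beta$ lies in $CH_{\lceil j/2\rceil + \lceil j'/2\rceil - i}(Y_{j'} \times Y_j)$. Since the ambient variety has dimension $(j \bmod 2) + (j' \bmod 2) \leq 2$, this Chow group is trivial as soon as $\lceil j/2\rceil + \lceil j'/2\rceil - i$ strictly exceeds it, equivalently $j + j' - 2i > (j \bmod 2) + (j' \bmod 2)$. A short rearrangement shows this disposes of every pair $(j, j')$ with $j + j' \geq 2i + 3$, as well as of every pair with $j + j' \in \{2i+1, 2i+2\}$ in which the parities are ``too even''.

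The only cases that survive the dimension count, given $j, j' \geq i$ and $(j,j') \neq (i,i)$, are the finite list $(j, j') \in \{(i, i+1), (i+1, i), (i, i+2), (i+1, i+1), (i+2, i)\}$, and in each of them $\beta$ is top-dimensional on $Y_{j'} \times Y_j$ and at least one of $j, j'$ must be odd (both must be odd when $j + j' = 2i+2$). Suppose $j'$ is odd, the case $j$ odd being strictly symmetric via ${}^t\gamma_j \circ \beta$. Then $\gamma_{j'}$ is a divisor on the surface $Y_{j'} \times Y_{j'}$, and the conditions $(\gamma_{j'})_*z_l = ({}^t\gamma_{j'})_*z_l = 0$ imposed in \S\ref{setup} force every irreducible component of $\gamma_{j'}$ to have bidegree $(0,0)$. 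Decomposing $\beta$ as a rational combination of fundamental classes $[W \times W']$ (with $W$ a component of $Y_{j'}$, $W'$ of $Y_j$), I would compute $[W \times W'] \circ \gamma_{j'}^{p,q}$ component-by-component: on a point $z \in W$ the $0$-cycle $(\gamma_{j'}^{p,q})_*z$ has degree zero by the bidegree vanishing, and its subsequent push-forward by $[W \times W']$ is therefore zero. Since $[W \times W'] \circ \gamma_{j'}^{p,q}$ is again top-dimensional and top-dimensional cycles are determined by their intersection numbers with points, this forces $[W \times W'] \circ \gamma_{j'}^{p,q} = 0$, and summing yields $\beta \circ \gamma_{j'} = 0$.

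The main obstacle is the bookkeeping in the case analysis: for each surviving pair $(j, j')$ one must identify whether $j$ or $j'$ is odd and test the middle correspondence against the correct $\gamma$, and verify that the bidegree-zero property truly annihilates every top-dimensional composition that can arise on a product of zero- and one-dimensional varieties. Once this is handled, multiplying on the left by $\Gamma_{2d-j} \circ {}^t\gamma_j$ and on the right by ${}^t\Gamma_{2d-j'}$ propagates the vanishing to the full composition, completing the proof.
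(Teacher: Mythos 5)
Your proposal is correct and follows essentially the same route as the paper: reduce to the middle composition ${}^t\gamma_j \circ {}^t\Gamma_j \circ L^{i-d} \circ \Gamma_{j'} \circ \gamma_{j'}$, kill most cases by a dimension count on $Y_{j'}\times Y_j$, and dispose of the few surviving top-dimensional cases using the normalization $(\gamma_{2m+1})_*z_{m,l}={}^t(\gamma_{2m+1})_*z_{m,l}=0$, exactly as in the two orthogonality propositions preceding the lemma. Your dimension bookkeeping ($\lceil j/2\rceil+\lceil j'/2\rceil-i$ versus the ambient dimension $(j\bmod 2)+(j'\bmod 2)$) is in fact slightly more careful than the paper's, and yields the same list of exceptional pairs.
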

\begin{proof}
  Up to transposing we only have to prove ${}^t\Pi_j \circ L^{i-d}
  \circ \Pi_{j'} = 0$ for $j' \geq j \geq i$ not all equal. In fact it
  is enough to prove $\gamma_j \circ {}^t\Gamma_j \circ L^{i-d} \circ
  \Gamma_{j'} \circ \gamma_{j'} = 0$ for $j' \geq j \geq i$ not all
  equal. The correspondence $\gamma_j \circ {}^t\Gamma_j \circ L^{i-d}
  \circ \Gamma_{j'} \circ \gamma_{j'}$ is a cycle of dimension
  $j+j'-2i$ in the Chow group of $P_{ \lfloor j' \rfloor} \times P_{
    \lfloor j\rfloor}$, $C_{ \lfloor j'\rfloor} \times P_{ \lfloor
    j\rfloor}$, $P_{ \lfloor j'\rfloor} \times C_{ \lfloor j\rfloor}$
  or $C_{ \lfloor j'\rfloor} \times C_{ \lfloor j\rfloor}$ depending
  on the parity of $j$ and $j'$. Notice that $j+j'-2i \geq 1$, and that
  $j+j'-2i =1$ implies that $j'=i+1$ and $j=i$, and that $j+j'-2i =2$
  implies that $j$ and $j'$ have same parity (in fact $j=j'=i+1$ or
  $j' = j+2 = i+2$). The proof that $\gamma_j \circ {}^t\Gamma_j \circ
  L^{i-d} \circ \Gamma_{j'} \circ \gamma_{j'} = 0$ in each of these
  cases is then similar to the cases treated in the proof of the
  previous proposition.
\end{proof}

\paragraph{Proof of theorem \ref{projector3}.} We proceed by
induction on $k \geq 0$ to prove property

\noindent $\mathcal{P}_k$ : There exist idempotents $\Pi_i \in CH_d(X
\times X)$ for $0 \leq i \leq 2d$ such that
\begin{itemize}
\item $\Pi_j \circ \Pi_i = 0$ if $j-i < k$ and $j \neq i$.
\item $\Pi_{2i}$ satisfies the properties listed in theorem
  \ref{projector1} for all $i$.
\item $\Pi_{2i+1}$ satisfies the properties listed in theorem
  \ref{projector2} for all $i$.
\item The $\Pi_i$'s satisfy the conclusion of lemma \ref{leflemma}.
\end{itemize}

Clearly if property $\mathcal{P}_{2d+1}$ holds, then the idempotents
$\Pi_i$ are mutually orthogonal. (Actually it is enough to settle
$\mathcal{P}_3$ by remark \ref{nottrue}).  If we set $\Pi_i :=
\Pi_{i,\lfloor i/2 \rfloor}$, we see thanks to theorems
\ref{projector1} and \ref{projector2}, remark \ref{remark} and lemma
\ref{leflemma} that property $\mathcal{P}_0$ holds. Let's suppose that
property $\mathcal{P}_k$ holds and let's prove that
$\mathcal{P}_{k+1}$ holds. \medskip

We set $$P_i := (1-\frac{1}{2}\Pi_{2d}) \circ
(1-\frac{1}{2}\Pi_{2d-1}) \circ \cdots \circ (1-\frac{1}{2}\Pi_{i+1})
\circ \Pi_{i} \circ (1-\frac{1}{2}\Pi_{i-1}) \circ \cdots \circ
(1-\frac{1}{2}\Pi_{0}).$$ By lemma \ref{linalg}, these define
idempotents such that $P_j \circ P_i = 0$ if $j-i < k+1$ and $j \neq
i$.. It remains to check that $P_i$ enjoys the same properties as
$\Pi_{i}$.

It is straightforward from the formula that we have ${}^tP_i =
P_{2d-i}$. It is also straightforward that $P_i$ induces the projector
$H_*(X) \r N^{\lfloor i/2 \rfloor} H_i(X) \r H_*(X)$ in homology.
\medskip

Let's now consider an integer $i \geq d$ and prove that the Lefschetz
correspondence $L^{i-d}$ induces an isomorphism of Chow motives
$(X,P_i,0) \r (X,P_{2d-i},i-d)$. In fact, we are going to show that
${}^tP_i \circ L^{i-d} \circ P_i$ admits $P_i \circ \Gamma_i \circ
\gamma_i \circ {}^t\Gamma_i \circ {}^tP_i$ as an inverse, i.e. that
  $$(P_i \circ \Gamma_i \circ \gamma_i \circ {}^t\Gamma_i \circ
  {}^tP_i) \circ ({}^tP_i \circ L^{i-d} \circ P_i) = P_i$$ and
  $$({}^tP_i \circ L^{i-d} \circ P_i) \circ  (P_i \circ \Gamma_i \circ
  \gamma_i \circ {}^t\Gamma_i \circ {}^tP_i) = {}^tP_i.$$ Because $L =
  {}^tL$ and $\gamma_i = {}^t\gamma_i$, the second equality is the
  transpose of the first one. Therefore it is enough to establish the
  first equality. Thanks to remark \ref{remark} we have $\Pi_j \circ
  \Gamma_i \circ \gamma_i= 0$ for all $j <i$ and by transposing
  $\gamma_i \circ {}^t\Gamma_i \circ {}^t\Pi_j = 0$ for all $j<i$.
  Expanding $P_i$, we therefore see that $$P_i \circ \Gamma_i \circ
  \gamma_i \circ {}^t\Gamma_i \circ {}^tP_i = (1-\frac{1}{2}\Pi_{2d})
  \circ \cdots \circ (1-\frac{1}{2}\Pi_{i+1}) \circ \Pi_{i} \circ
  \Gamma_i \circ \gamma_i \circ {}^t\Gamma_i \circ {}^t\Pi_i \circ
  (1-\frac{1}{2}{}^t\Pi_{i+1}) \circ \cdots \circ
  (1-\frac{1}{2}{}^t\Pi_{2d}).$$ On the other hand, lemma
  \ref{leflemma} implies that $${}^tP_i \circ L^{i-d} \circ P_i =
  (1-\frac{1}{2}{}^t\Pi_{0}) \circ \cdots \circ
  (1-\frac{1}{2}{}^t\Pi_{i+1}) \circ {}^t\Pi_i \circ L^{i-d} \circ
  \Pi_i \circ (1-\frac{1}{2}\Pi_{i-1}) \circ \cdots \circ
  (1-\frac{1}{2}\Pi_{0}).$$ Put altogether, this gives
  $$(P_i \circ
  \Gamma_i \circ \gamma_i \circ {}^t\Gamma_i \circ {}^tP_i) \circ
  ({}^tP_i \circ L^{i-d} \circ P_i) = $$ $$(1-\frac{1}{2}\Pi_{2d})
  \circ \cdots \circ (1-\frac{1}{2}\Pi_{i+1}) \circ \Pi_{i} \circ
  \Gamma_i \circ \gamma_i \circ {}^t\Gamma_i \circ {}^t\Pi_i \circ
  L^{i-d} \circ \Pi_i \circ (1-\frac{1}{2}\Pi_{i-1}) \circ \cdots
  \circ (1-\frac{1}{2}\Pi_{0}).$$ By proposition \ref{lef1} if $i$ is
  even and by proposition \ref{lef2} if $i$ is odd, we have $\Pi_{i}
  \circ \Gamma_i \circ \gamma_i \circ {}^t\Gamma_i \circ {}^t\Pi_i
  \circ L^{i-d} \circ \Pi_i = \Pi_i$. This finishes the proof of the
  Lefschetz isomorphism. \medskip

  Let's now prove that the $P_i$'s satisfy the conclusion of lemma
  \ref{leflemma}. A careful look at the proof of lemma \ref{leflemma}
  shows that it is enough to show that $P_j$ factors through $\Gamma_j
  \circ {}^t\gamma_j$ if $\Pi_j$ does. This can be read immediately
  from the formula defining $\Pi_j$. \medskip

  If the projectors $\Pi_{2i}$ factor through a $0$-dimensional
  variety and if the projectors $\Pi_{2i+1}$ factor through a curve
  for all $i$, then it is clear from the formula that so will the
  projectors $P_{2i}$ and $P_{2i+1}$. On the one hand, proposition
  \ref{hom} gives $CH_i(X)_\hom = \ker\big( P_{2i} : CH_i(X) \r
  CH_i(X)\big)$ and proposition \ref{jacobi} gives $\ker \big( AJ_i :
  CH_i(X)_{\alg} \r J_i^a(X) \otimes \Q \big) = \ker \big(P_{2i+1} :
  CH_i(X)_{\alg} \r CH_i(X)_{\alg} \big).$ On the other hand,
  proposition \ref{Chow1} shows that $(X,P_{2i},0)$ is isomorphic to
  $(\mathds{L}^{\otimes i})^{\oplus d_i}$ and proposition \ref{Chow2}
  shows that $(X,P_{2i+1},0)$ is isomorphic to
  $\h_1(J_i^a(X))(i)$. Alternately, the conclusion of theorem \ref{GS}
  gives these isomorphisms of Chow motives.
\qed

\section{Representability of Chow groups and finite dimensional
  motives}

Given a smooth projective complex variety $X$ of dimension $d$, its
$i^\mathrm{th}$ Deligne cohomology group $H_i^\mathcal{D}(X,\Z(p))$ is
the $(2d-i)^\mathrm{th}$ hypercohomology group of the complex
$\Z_\mathcal{D}(d-p)$ given by $ 0 \r \Z \stackrel{\cdot
  (2i\pi)^{d-p}}{\longrightarrow} \mathcal{O}_X \r \Omega_X \r \cdots
\r \Omega_X^{d-p-1} \r 0$. In other words,
 $$H_i^\mathcal{D}(X,\Z(p)) =
 \mathds{H}^{2d-i}(X,\Z_\mathcal{D}(d-p)).$$ Deligne cohomology comes
 with a cycle map $cl_i^\mathcal{D} : CH^\Z_i(X) \r
 H_{2i}^\mathcal{D}(X,\Z(i))$ defined on the integral Chow group
 $CH^\Z_i(X)$ which is functorial with respect to the action of
 correspondences and fits into an exact sequence $$ 0 \r J_i(X) \r
 H_{2i}^\mathcal{D}(X,\Z(i)) \r \mathrm{Hdg}_{2i}(X) \r 0$$ where
 $\mathrm{Hdg}_{2i}(X)$ denotes the Hodge classes in $H_{2i}(X,\Z)$
 and $J_i(X)$ is Griffiths' $i^{th}$ intermediate Jacobian.  As proved
 in \cite[Prop. 1]{ZZ}, the following diagram with exact rows commutes
 \vspace{-25pt}
 \begin{center} \begin{equation}\label{eq} \xymatrix{0 \ar[r] &
       CH^\Z_i(X)_{\hom} \ar[d]^{AJ_i} \ar[r] & CH^\Z_i(X)
       \ar[d]^{cl_i^\mathcal{D}}
       \ar[r] & CH^\Z_i(X)/\hom \ar[d]^{cl_i} \ar[r] & 0  \\
       0 \ar[r] & J_i(X) \ar[r] & H_{2i}^\mathcal{D}(X) \ar[r] &
       \mathrm{Hdg}_{2i}(X) \ar[r] & 0. } \end{equation} \end{center}
 The homomorphism $cl_i : CH^\Z_i(X)/\hom \r \mathrm{Hdg}_{2i}(X)$ is
 always injective by definition of homological equivalence. In
 particular the functoriality of the Deligne cycle class map implies
 the functoriality of the Abel-Jacobi map with respect to the action
 of correspondences.

\begin{lemma} \label{essential} Let $i$ be an integer such that $d
  \leq 2i \leq 2d$.
  \begin{itemize}
  \item If the map $cl : CH_i(X) \r H_{2i}(X)$ is surjective then
    $H_{2i}(X) = N^iH_{2i}(X)$ and $H_{2d-2i}(X) =
    N^{d-i}H_{2d-2i}(X)$.
  \item If the map $cl^{\mathcal{D}} : CH_i(X) \r
    H^{\mathcal{D}}_{2i}(X)$ is surjective then $H_{2i+1}(X)= {{N}}^i
    H_{2i+1}(X)$ and $H_{2d-2i-1}(X) = N^{d-i-1}H_{2d-2i-1}(X)$.
\end{itemize}
\end{lemma}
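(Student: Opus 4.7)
For the first bullet, by construction $N^iH_{2i}(X)$ is the image of the rational cycle class map $cl_i\colon CH_i(X)\r H_{2i}(X)$, so surjectivity of $cl_i$ immediately yields $H_{2i}(X)=N^iH_{2i}(X)$. For the companion equality, I would invoke hard Lefschetz: since $2i\ge d$, the operator $L^{2i-d}\colon H_{2i}(X)\r H_{2d-2i}(X)$ is an isomorphism, and because $L$ is represented by an algebraic correspondence it carries cycle classes to cycle classes, so $L^{2i-d}(N^iH_{2i}(X))\subseteq N^{d-i}H_{2d-2i}(X)$. Combined with $N^iH_{2i}(X)=H_{2i}(X)$ and the bijectivity of $L^{2i-d}$, this forces $N^{d-i}H_{2d-2i}(X)=H_{2d-2i}(X)$.

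For the second bullet, the plan is to deduce from the surjectivity of $cl^{\mathcal{D}}_i$ that $J_i(X)=J_i^a(X)$ as complex tori, which via the identifications $2\dim J_i(X)=\dim_\Q H_{2i+1}(X)$ and $2\dim J_i^a(X)=\dim_\Q N^iH_{2i+1}(X)$ yields $H_{2i+1}(X)=N^iH_{2i+1}(X)$; the dual statement in degree $2d-2i-1$ then follows by the same hard Lefschetz argument as in the first bullet, this time applied to $L^{2i+1-d}$, which sends $N^iH_{2i+1}(X)$ into $N^{d-i-1}H_{2d-2i-1}(X)$.

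To obtain $J_i(X)=J_i^a(X)$, I would first chase the commutative diagram~(\ref{eq}): given $c\in J_i(X)\otimes\Q$, view it as an element of $H^{\mathcal{D}}_{2i}(X,\Q(i))$; the surjectivity hypothesis writes $c=cl^{\mathcal{D}}_i(\alpha)$ for some $\alpha\in CH_i(X)_\Q$, and since the image of $c$ in $\mathrm{Hdg}_{2i}(X)\otimes\Q$ vanishes, $\alpha$ is homologically trivial and $AJ_i(\alpha)=c$. So $AJ_i\otimes\Q$ is surjective onto $J_i(X)\otimes\Q$. It then remains to show that its image already lies in $J_i^a(X)\otimes\Q$: here I would use that $\alpha$ is supported in codimension at least $d-i$, and after desingularizing its support obtain a smooth projective variety $\widetilde Z$ of dimension at most $i+1$ through which $\alpha$ factors; standard arguments involving factorization through the Albanese of a smooth linear section curve of $\widetilde Z$ (in the spirit of the use of Proposition~\ref{Scholl} in \S2.3) then place $AJ_i(\alpha)$ inside $J_i^a(X)(k)\otimes\Q$. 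Combined with the surjectivity this gives $J_i^a(X)\otimes\Q=J_i(X)\otimes\Q$, and since $J_i^a(X)\hookrightarrow J_i(X)$ is an inclusion of complex tori, equality after rationalization forces $J_i^a(X)=J_i(X)$.

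The main obstacle is this last support/factorization step in the second bullet: a general homologically trivial cycle need not be algebraically trivial, and there is no a priori reason for its Abel-Jacobi image to land in the algebraic piece $J_i^a(X)$. The surjectivity of $cl^{\mathcal{D}}$ must be used essentially to produce a representative $\alpha$ whose support fits the framework of Proposition~\ref{Scholl}, so that the factorization through an Albanese of a curve kicks in. By comparison, the first bullet and both Lefschetz pieces are formal consequences of the hard Lefschetz theorem together with the definition of $N^\bullet$.
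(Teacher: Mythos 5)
Your first bullet and both Lefschetz steps coincide with the paper's argument: $H_{2i}(X)=N^iH_{2i}(X)$ is immediate from the definition of $N^i$, and since $L^{2i-d}$ (resp.\ $L^{2i+1-d}$) is an isomorphism induced by an algebraic correspondence, it carries the coniveau piece onto the coniveau piece of the dual degree, so non-degeneracy of the hypotheses transfers the equality to degree $2d-2i$ (resp.\ $2d-2i-1$). The diagram chase showing that surjectivity of $cl_i^{\mathcal D}$ forces surjectivity of $AJ_i\otimes\Q$ onto $J_i(X)\otimes\Q$ is also exactly the paper's first step in the second bullet.

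However, the step you yourself flag as ``the main obstacle'' is a genuine gap, and your proposed fix does not work. A homologically trivial $i$-cycle is \emph{always} supported on a subvariety of dimension $i$ (its own support), so ``supported in codimension at least $d-i$'' carries no information, and desingularizing the support and passing through an Albanese only controls the Abel--Jacobi image of \emph{algebraically} trivial cycles (this is where horizontality of normal functions enters); for a general class in $CH_i(X)_{\hom}$ there is indeed no reason for $AJ_i(\alpha)$ to lie in $J_i^{\mathrm{alg}}(X)$, precisely because of Griffiths-type phenomena. The paper's resolution is a countability argument that you are missing: the Griffiths group $\mathrm{Griff}_i(X)=CH^{\Z}_i(X)_{\hom}/CH^{\Z}_i(X)_{\alg}$ is countable, so the image of $AJ_i$ on all of $CH^{\Z}_i(X)_{\hom}$ differs from $J_i^a(X)=AJ_i\bigl(CH^{\Z}_i(X)_{\alg}\bigr)$ by at most a countable group. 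If $J_i(X)\otimes\Q\neq J_i^{a}(X)\otimes\Q$, the quotient torus $J_i(X)/J_i^{a}(X)$ is a nontrivial complex torus, hence uncountable even after tensoring with $\Q$, and could not be exhausted by a countable set together with the surjectivity of $AJ_i\otimes\Q$. Therefore $J_i(X)\otimes\Q=J_i^{a}(X)\otimes\Q$, which gives $H_{2i+1}(X)=N^iH_{2i+1}(X)$ via the dimension count you already have. You should replace your support/factorization step by this countability argument.
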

\begin{proof}
  If the map $cl : CH_i(X) \r H_{2i}(X)$ is surjective then by
  definition $H_{2i}(X) = N^iH_{2i}(X)$. Because the Lefchetz
  isomorphism $L^{2i-d} : H_{2i}(X) \r H_{2d-2i}(X)$ is induced by a
  correspondence we also see that $H_{2d-2i}(X) =
  N^{d-i}H_{2d-2i}(X)$.

  Now suppose that the map $cl^{\mathcal{D}} : CH_i(X) \r
  H^{\mathcal{D}}_{2i}(X)$ is surjective.  A simple diagram chase in
  diagram \ref{eq} shows that the Abel-Jacobi map $AJ_i : CH_i(X)_\hom
  \r J_i(X)\otimes \Q$ is then surjective.  The Griffiths group
  $\mathrm{Griff}_i(X)$ being countable, this is possible only if
  $J_i(X) \otimes \Q = J_i^\alg(X) \otimes \Q$.  Therefore we have
  $J_i(X)\otimes \Q = J_i^a(X) \otimes \Q$ and hence $H_{2i+1}(X)=
  {{N}}^i H_{2i+1}(X)$.  Again because the Lefchetz isomorphism
  $L^{2i-d+1} : H_{2i+1}(X) \r H_{2d-2i-1}(X)$ is induced by a
  correspondence we also see that $H_{2d-2i-1}(X) =
  N^{d-i-1}H_{2d-2i-1}(X)$.
 \end{proof}

\subsection{From finite dimensionality to representability : proof of
  $2 \Rightarrow 1$ in theorem \ref{theorem2}}

First we need a standard
lemma.

\begin{lemma} \label{2} Let $N$ be a finite dimensional Chow motive.
  If its homology groups $H_*(N)$ vanish then $N=0$.
\end{lemma}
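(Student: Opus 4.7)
The plan is to apply Kimura's nilpotency theorem. Recall that if $N$ is a finite dimensional Chow motive, then any endomorphism $f \in \End(N)$ which is numerically trivial is nilpotent (this is the main structural output of \cite{Kimura}). I will exploit this for $f = \id_N$.

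First I would observe that the hypothesis $H_*(N) = 0$ implies in particular that the identity endomorphism $\id_N$ induces the zero map on homology. Hence $\id_N$ is homologically equivalent to zero. Since homological equivalence is finer than numerical equivalence (i.e.\ a homologically trivial cycle is numerically trivial), $\id_N$ is also numerically trivial.

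Now, applying Kimura's nilpotency theorem to the finite dimensional motive $N$, the numerically trivial endomorphism $\id_N$ must be nilpotent. But $\id_N^n = \id_N$ for every $n \geq 1$, so the only way for it to be nilpotent is $\id_N = 0$, which forces $N = 0$.

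The only subtle point is the passage from $H_*(f) = 0$ to numerical triviality of $f$; this uses nothing more than the standard fact that homological equivalence refines numerical equivalence, which holds for any Weil cohomology, so no extra hypothesis is needed. The rest is a direct invocation of finite dimensionality.
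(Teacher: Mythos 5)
Your proof is correct and follows essentially the same route as the paper: both arguments observe that $H_*(N)=0$ forces $\id_N$ to be (homologically, hence numerically) trivial, invoke Kimura's nilpotency theorem for finite dimensional motives to conclude $\id_N$ is nilpotent, and deduce $\id_N=0$. The only cosmetic difference is that the paper applies the nilpotency statement directly to the ideal of homologically trivial endomorphisms (Kimura, Prop.~7.2), whereas you pass through numerical equivalence first; both versions of Kimura's result apply here.
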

\begin{proof}
  The homology class of $\id_N \in \End_{k}(N)$ is then $0$. Kimura
  \cite[Prop. 7.2]{Kimura} proved that if a Chow motive $N$ is finite
  dimensional then the ideal of correspondences in $\End_{k}(N)$ which
  are homologically trivial is a nilpotent ideal. Hence $\id_N$ is
  nilpotent i.e. $\id_N =0$.
\end{proof}

\noindent \textit{Proof of $2 \Rightarrow 1$.} Lemma \ref{essential}
shows that $H_i(X) = N^{\lfloor i/2 \rfloor}H_i(X)$ for all $i$.
Therefore by lemma \ref{pairing-proj} the pairings $N^{\lfloor i/2
  \rfloor}H_i(X) \times N^{\lfloor (2d-i)/2 \rfloor}H_{2d-i}(X) \r \Q$
are all non degenerate. Theorems \ref{projector1}, \ref{projector2}
and \ref{projector3} then show that $A:= \mathds{1} \oplus
\h_1(\mathrm{Alb}_X) \oplus \L^{\oplus b_{2}} \oplus \h_1(J^a_1(X))(1)
\oplus (\L^{\otimes 2})^{\oplus b_{4}} \oplus \cdots \oplus
\h_1(J^a_{d-1}(X))(d-1) \oplus \L^{\otimes d}$ is a direct summand of
the Chow motive $\h(X)$ and that $H_*(A) = H_*(X)$.  Let's write
$\h(X) = A \oplus N$. The property of being finite dimensional is
stable by direct summand. Therefore $N$ is a finite dimensional
motive. Moreover $H_*(N)=0$. Lemma \ref{2} shows that $N=0$. \qed

\subsection{Representability vs. injectivity of the Abel-Jacobi maps :
  proof of $3 \Leftrightarrow 4$ in theorem \ref{theorem2}}

The results in this section are seemingly well-known. Given a smooth
projective complex variety $X$, we prove that the
following statements are equivalent :
\begin{enumerate}
\item $CH_i(X)_\alg$ is representable for all $i$.
\item The total Abel-Jacobi map $\bigoplus_i CH_i(X)_{\hom} \r
  \bigoplus_i J_i(X) \otimes \Q$ is injective.
\item The total Deligne cycle class map $cl_{\mathcal{D}} :
  \bigoplus_i CH_i(X)_{\hom} \r \bigoplus_i
  H_{2i}^{\mathcal{D}}(X,\Q(i))$ is injective.
\item The total Deligne cycle class map is bijective and
  $H_i(X)=N^{\lfloor i/2\rfloor }H_i(X)$ for all $i$.
\end{enumerate}

The equivalence ($2 \Leftrightarrow 3$) follows immediately from
diagram \ref{eq}.  The implication ($4 \Rightarrow 3$) is obvious and
the implication ($3 \Rightarrow 4$) is due to Esnault and Levine
\cite{EL} (theorem \ref{ELdeligne} below together with lemma
\ref{essential}). The main argument is a generalized decomposition of
the diagonal as performed by Laterveer \cite{Laterveer} and Paranjape
\cite{Paranjape} among others after Bloch's and Srinivas' original
paper \cite{BS}. Proposition \ref{bloch} proves the standard
implication ($2 \Rightarrow 1$). We couldn't find any reference for
the implication ($1 \Rightarrow 2$) so we include a proof of it, see
corollary \ref{rep_inj}. The proof goes through a generalized
decomposition of the diagonal as done in \cite[Theorem 1.2]{EL} with
some minor changes (theorem \ref{decdiagonal}).

\begin{theorem} [Esnault-Levine] \label{ELdeligne} Let $s$ be an
  integer.  Assume that the rational Deligne cycle class maps
  $cl_i^\mathcal{D} : CH_i(X) \r H^\mathcal{D}_{2i}(X)$ are injective
  for all $i \leq s$. Then these are all surjective. Moreover the
  rational cycle class maps $cl_i : CH_i(X) \r H_{2i}(X)$ are also
  surjective for all $i \leq s$.
\end{theorem}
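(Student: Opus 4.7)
The strategy is a generalized Bloch--Srinivas decomposition of the diagonal, in the spirit of \cite{BS} as refined by Paranjape and Laterveer, adapted here to the Deligne-cohomology setting. The argument proceeds in three stages: (i) translate the injectivity hypothesis into a control of Chow groups at the generic point; (ii) extract a decomposition of $\Delta_X$; (iii) act on Deligne and singular cohomology to obtain the two surjectivity statements.

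For (i), the exact sequence
\[ 0 \to J_i(X)\otimes\Q \to H^{\mathcal{D}}_{2i}(X,\Q(i)) \to \mathrm{Hdg}_{2i}(X)\otimes\Q \to 0 \]
shows that injectivity of $cl_i^{\mathcal{D}}$ for all $i\leq s$ forces injectivity of the rational Abel--Jacobi map $AJ_i : CH_i(X)_{\hom}\otimes\Q \hookrightarrow J_i(X)\otimes\Q$. Since its image lies in the finite-dimensional abelian variety $J^a_i(X)\otimes\Q$, a standard spreading-out argument at the generic point shows that for each $i\leq s$ the group $CH_i(X_F)_{\alg}$ (for $F/k$ finitely generated) is supported on a proper closed subset coming from $X$. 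Feeding these $s+1$ controls into the Paranjape--Laterveer diagonal argument yields, in step (ii), a decomposition
\[ \Delta_X \;=\; \sum_{j=0}^{s}\Gamma_j \;+\; R \quad\text{in}\ CH_d(X\times X)\otimes\Q, \]
where each $\Gamma_j = \beta_j\circ\alpha_j$ factors through a smooth projective variety $Y_j$ with $\dim Y_j\leq j$, and $R$ is supported on $X\times W$ for some closed $W\subsetneq X$ of dimension at most $s$.

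For (iii), fix $i\leq s$ and $\alpha\in H^{\mathcal{D}}_{2i}(X,\Q(i))$. Writing $\alpha = (\Delta_X)_*\alpha = \sum_j(\Gamma_j)_*\alpha + R_*\alpha$, each term $(\Gamma_j)_*\alpha = \beta_{j*}\alpha_j^*\alpha$ lies in the image of $\beta_{j*}: H^{\mathcal{D}}_{2i}(Y_j)\to H^{\mathcal{D}}_{2i}(X)$; by induction on $\dim X$ applied to the strictly smaller $Y_j$, this factor is spanned by classes of algebraic cycles on $Y_j$, hence its pushforward lies in $\im cl_i^{\mathcal{D}}$. The residual $R_*\alpha$ is supported on $W$ of dimension $\leq s$ and likewise lies in $\im cl_i^{\mathcal{D}}$ after passing to a desingularization of $W$. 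This establishes surjectivity of $cl_i^{\mathcal{D}}$. Applying the same decomposition to the Hodge structure on $H_{2i}(X)$, each $(\Gamma_j)_*$ with $j<i$ annihilates the components $H^{p,2i-p}(X)$ with $p\neq i$ by dimension of $Y_j$, and the residual $R_*$ preserves only the coniveau-$i$ part; one concludes that $H_{2i}(X,\Q)$ is pure of Hodge type $(i,i)$. Combined with the surjection $H^{\mathcal{D}}_{2i}(X,\Q(i))\twoheadrightarrow \mathrm{Hdg}_{2i}(X)\otimes\Q$, this yields surjectivity of $cl_i$ onto all of $H_{2i}(X)$.

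\paragraph{Main obstacle.}
The hard part is step (ii). The classical Bloch--Srinivas decomposition uses only representability of $CH_0$, and the Paranjape--Laterveer refinement assumes outright representability of $CH_i$ up to level $s$; here we have only injectivity of the intermediate Abel--Jacobi maps, and the passage from this weaker input to the required generic-point support statement for $CH_i(X_F)_{\alg}$ is the delicate point, relying crucially on the finite dimensionality of the algebraic intermediate Jacobians $J_i^a(X)$. Once the decomposition is in hand, both surjectivity statements follow by fairly mechanical correspondence manipulations.
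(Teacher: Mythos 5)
Your overall route is the right one: the paper itself does not reprove this statement but simply cites Esnault--Levine (Thm.~2.5, Cor.~2.6 and Thm.~3.2 of \cite{EL}), and the cited proof is precisely a generalized Bloch--Srinivas decomposition of the diagonal obtained from the injectivity hypothesis via injectivity of the Abel--Jacobi maps and representability of $CH_i(X)_{\alg}$ (compare the chain Proposition~\ref{bloch} $\rightarrow$ Theorem~\ref{decdiagonal} $\rightarrow$ Corollary~\ref{rep_inj} elsewhere in this paper). A small slip in step (i): the image of $AJ_i$ on all of $CH_i(X)_{\hom}$ does not lie in $J_i^a(X)$ --- only the image of $CH_i(X)_{\alg}$ does --- but what the diagonal argument actually needs is representability of $CH_i(X)_{\alg}$, which does follow, so this is repairable.

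The genuine gap is in step (ii), and it propagates to step (iii). The decomposition you write down, $\Delta_X=\sum_{j\leq s}\Gamma_j+R$ with \emph{every} term factoring through a variety of dimension $\leq s$ on the target side ($Y_j$ of dimension $\leq j$, $R$ supported on $X\times W$ with $\dim W\leq s$), cannot exist when $s<d$: acting by $(\Delta_X)_*$ on $H_{2d}(X)$ it would force the fundamental class to be supported in dimension $\leq s$. The correct decomposition (cf.\ Theorem~\ref{decdiagonal}) has terms $\gamma_j$ supported on $D^j\times\Gamma_{j+1}$ with \emph{both} factors controlled ($\operatorname{codim}D^j=j$ and $\dim\Gamma_{j+1}=j+1$), and the residual supported on $D^{s+1}\times X$ with $\operatorname{codim}D^{s+1}=s+1$, i.e.\ small on the \emph{source} side. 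These constraints are what make step (iii) work: for $i\leq s$ the residual kills $H_{2i}(X)$ because the Gysin pullback to $D^{s+1}$ lands in negative homological degree, the terms with $j>i$ die for the same reason via $D^j$, and the terms with $j\leq i$ have image in $g_*H_{2i}(\widetilde{\Gamma}_{j+1})$ with $\dim\Gamma_{j+1}\leq i+1$, which is algebraic (or zero) by a degree count --- no induction needed. Your appeal to ``induction on $\dim X$ applied to the strictly smaller $Y_j$'' is not available in any case, since the inductive hypothesis (injectivity of the Deligne cycle class maps) is not inherited by the auxiliary varieties $Y_j$; and with your orientation of the residual term, $R_*$ on $H_{2i}^{\mathcal{D}}(X)$ has image in that of $H_{2i}^{\mathcal{D}}(\widetilde{W})$, which for $i<\dim W$ is not spanned by algebraic classes. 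Fixing the statement of the decomposition and replacing the induction by the dimension count above would make the argument go through.
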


\begin{proof} The fact that the rational Deligne cycle class maps are
  surjective for all $i \leq s$ is contained in Theorem 2.5 of
  \cite{EL} (the maps $cl_i^\mathcal{D}$ are denoted $cl_{0,0}^{2d-i}$
  in \cite{EL}). The claim about the rational cycle class maps being
  surjective is Corollary 2.6 (which states that
  $N^iH_{2i}(X)=Hdg_{2i}(X) \otimes \Q$) together with Theorem 3.2
  (which states in particular that $H_{2i}(X)=Hdg_{2i}(X) \otimes \Q$)
  of \cite{EL}.
 \end{proof}

\begin{proposition} \label{bloch} Given $i$, if the Abel-Jacobi map
  $CH_i(X)_{\mathrm{alg}} \r J_i(X) \otimes \Q$ is
  injective, then $CH_i(X)_{\mathrm{alg}}$ is  representable.
\end{proposition}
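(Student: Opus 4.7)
The plan is to produce a single smooth projective curve $C$ and a correspondence $\Gamma \in CH_{i+1}(C \times X)$ such that $\Gamma_* : CH_0(C)_\alg \r CH_i(X)_\alg$ is surjective, which is precisely the definition of representability. The injectivity hypothesis on $AJ_i$ will enter only at the last step, to transfer surjectivity after the Abel--Jacobi map into surjectivity on the Chow side.

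First I would find a curve whose Albanese surjects onto $J_i^a(X)$. By definition, every algebraically trivial cycle lies in $\Gamma'_* CH_0(C')_\alg$ for some smooth projective curve $C'$ and some $\Gamma' \in CH_{i+1}(C' \times X)$, so by functoriality of the Abel--Jacobi map
$$J_i^a(X) \otimes \Q \ = \ \sum_{C',\,\Gamma'} \Gamma'_*\big(\mathrm{Alb}_{C'} \otimes \Q\big).$$
Each summand is an abelian subvariety of the finite-dimensional abelian variety $J_i^\alg(X)$, so this directed sum stabilises: finitely many pairs $(C'_j, \Gamma'_j)_{j=1}^n$ suffice. Setting $C := \bigsqcup_j C'_j$ and $\Gamma := \sum_j \Gamma'_j$, we obtain a surjection $\Gamma_* : \mathrm{Alb}_C \otimes \Q \twoheadrightarrow J_i^a(X) \otimes \Q$. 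Precomposing with the Abel--Jacobi surjection $CH_0(C)_\alg \twoheadrightarrow \mathrm{Alb}_C \otimes \Q$ yields the surjectivity of $AJ_i \circ \Gamma_* : CH_0(C)_\alg \r J_i^a(X) \otimes \Q$.

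To conclude, I would invoke the tautological factorisation $CH_i(X)_\alg \twoheadrightarrow J_i^a(X) \otimes \Q \hookrightarrow J_i(X) \otimes \Q$ coming from the commutative diagram of \S 2.3. The injectivity hypothesis on $AJ_i$ forces the first arrow to be a $\Q$-linear isomorphism. Combined with the surjectivity of $AJ_i \circ \Gamma_*$ established above, this forces $\Gamma_* : CH_0(C)_\alg \r CH_i(X)_\alg$ itself to be surjective, establishing representability.

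The only non-formal step is the Noetherian extraction of the single curve $C$; it relies essentially on $J_i^a(X)$ being an abelian subvariety of a finite-dimensional abelian variety. All remaining manipulations are formal consequences of the functoriality of $AJ_i$ with respect to correspondences and of the surjectivity of the Abel--Jacobi map on a smooth projective curve.
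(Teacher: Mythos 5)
Your proof is correct and follows essentially the same route as the paper's: use functoriality of the Abel--Jacobi map to write $J_i^a(X)$ as a sum of images of Jacobians of curves, invoke finiteness of abelian varieties to reduce to a single curve, and then use the injectivity hypothesis to transfer surjectivity from the Jacobian level back to the Chow level. The only difference is that you spell out the Noetherian stabilisation and the disjoint-union construction explicitly, which the paper compresses into the phrase ``by finiteness properties of abelian varieties.''
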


\begin{proof}
  Let $J_i^a(X)$ be the image of the Abel-Jacobi map $AJ_i :
  CH^\Z_i(X)_{\mathrm{alg}} \r J_i(X)$. By definition of algebraic
  equivalence, $CH_i(X)_\alg := \sum \im \big(\Gamma_* : CH_0(C)_\hom
  \r CH_i(X) \big)$ where the sum runs through all smooth projective
  curves $C$ and all correspondences $\Gamma \in CH_{i+1}(C \times
  X)$. Therefore, by functoriality of the Abel-Jacobi map, we have
  $J_i^a(X) = \sum \im \big(\Gamma_* : J(C) \r J_i^a(X) \big)$. By
  finiteness properties of abelian varieties there exist a curve and a
  correspondence $\Gamma \in CH_{i+1}(C \times X)$ such that $J_i^a(X)
  = \Gamma_*J(C)$. Therefore for this particular curve
  $\Gamma_*CH_0(C)_\hom = CH_i(X)_\alg$.
\end{proof}

\begin{theorem} \label{decdiagonal} Let $s$ be an integer with $0 \leq
  s \leq d$ and let $X$ be a $d$-dimensional smooth projective complex
  variety.  Assume $CH_i(X)_{\alg}$ is representable for all $i \leq
  s$. Then, there is a decomposition $$\Delta_X = \gamma_0 + \gamma_1
  + \cdots + \gamma_s + \gamma^{s+1} \in CH_d(X \times X)$$ such that
  $\gamma_i$ is supported on $D^i \times \Gamma_{i+1}$ for some
  subschemes $D^i$ and $\Gamma_{i+1}$ of $X$ satisfying $\dim D^i =
  d-i$ and $\dim \Gamma_{i+1} = i+1$ and $\gamma^{s+1}$ is supported
  on $D^{s+1} \times X$ for some subscheme $D^{s+1}$ of $X$ satisfying
  $\dim D^{s+1} = d-s-1$.
\end{theorem}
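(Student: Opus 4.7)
The plan is to argue by induction on $s$, in the spirit of Bloch--Srinivas and following the strategy of Paranjape \cite{Paranjape} and Laterveer \cite{Laterveer}, with the minor modifications pointed out in \cite[Theorem 1.2]{EL}.

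For the base case $s=0$, I would use the representability of $CH_0(X)_\alg$ to produce a smooth projective curve $C$ and a correspondence $\Gamma \in CH_1(C \times X)$ with $\Gamma_* CH_0(C)_\alg = CH_0(X)_\alg$. Taking the direct limit of the localization exact sequences yields
$$\varinjlim_{D \subsetneq X} CH_d(D \times X)_\Q \longrightarrow CH_d(X \times X)_\Q \longrightarrow CH_0(X_{k(X)})_\Q \longrightarrow 0,$$
so it suffices to show that, in $CH_0(X_{k(X)})_\Q$, the restriction of $\Delta_X$ to the generic point of the first factor agrees with a $0$-cycle supported on the image of the generic fiber of $\Gamma$. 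A standard spreading-out argument then produces a cycle $\gamma_0$ supported on $X \times \Gamma_1$ with $\Gamma_1 \subset X$ a curve, and the vanishing of $\Delta_X - \gamma_0$ at the generic point of the first factor forces this difference to be supported on $D^1 \times X$ for some divisor $D^1 \subset X$.

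For the inductive step, I would assume the decomposition $\Delta_X = \gamma_0 + \cdots + \gamma_{s-1} + \gamma^s$ with $\gamma^s$ supported on $D^s \times X$, $\dim D^s = d - s$. Choosing a desingularization $\pi : \widetilde{D} \to D^s$, I would lift $\gamma^s$ to a cycle $\tilde{\gamma}^s \in CH_d(\widetilde{D} \times X)_\Q$ whose pushforward under $\pi \times \id$ is $\gamma^s$. Restricting $\tilde{\gamma}^s$ to the generic fiber $X_{k(\widetilde{D})}$ yields a class in $CH_s(X_{k(\widetilde{D})})_\Q$. Invoking the representability of $CH_s(X)_\alg$ via a curve $C_s$ and a correspondence $\Gamma_s \in CH_{s+1}(C_s \times X)$, a Bloch--Srinivas-type argument should show that this generic class comes, modulo classes vanishing on a dense open, from a cycle supported on the image of $\Gamma_s$. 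Spreading out over $\widetilde{D}$ and applying the localization sequence in the first factor, one obtains a cycle $\gamma_s$ supported on $D^s \times \Gamma_{s+1}$ (for some $\Gamma_{s+1} \subset X$ of dimension $s+1$ built from the image of $\Gamma_s$) such that $\gamma^s - \gamma_s$ restricts to zero over a dense open of $\widetilde{D}$, hence is supported on $D^{s+1} \times X$ with $\dim D^{s+1} \leq d - s - 1$, as required.

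The main technical obstacle will be the passage from the representability hypothesis, which is a statement about algebraically trivial cycles up to rational equivalence, to a genuine rational-equivalence decomposition of the generic fiber class of $\tilde{\gamma}^s$. This requires combining a spreading-out argument with the divisibility of the Jacobian of $C_s$ (so that algebraic equivalences can be witnessed by rational equivalences after scaling, harmless with $\Q$-coefficients), while carefully tracking how rational equivalences over $k(\widetilde{D})$ propagate to dense opens of $\widetilde{D}$. Ensuring that the resulting $\gamma_s$ is supported on a product of the correct shape, namely $D^s \times \Gamma_{s+1}$ with $\dim \Gamma_{s+1} = s+1$, is then a matter of bookkeeping of the support of the spread-out correspondence, exactly as in the variant carried out in \cite{EL}.
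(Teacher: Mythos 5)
Your overall strategy---an inductive generalized decomposition of the diagonal in the style of Bloch--Srinivas, Paranjape, Laterveer and Esnault--Levine---is exactly the paper's: the published proof consists of a single remark to the effect that Esnault--Levine's Lemma 1.1 applies verbatim once one observes that the map $ch : CH_0(\widetilde{D}) \to CH_s(X)$ induced by $\gamma^s$ factors through the Albanese map $CH_0(\widetilde{D}) \to \mathrm{Alb}_{\widetilde{D}}$.

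That one observation, however, is precisely the step you isolate as ``the main technical obstacle,'' and the mechanism you propose for it is not the right one. Divisibility of $J(C_s)$ and rescaling algebraic equivalences into rational ones is a non-issue here: with $\Q$-coefficients one has $CH_0(C_s)_{\mathrm{alg}} \cong J(C_s)\otimes \Q$ for a curve in any case. The genuine difficulty is algebraicity in the parameter. Knowing only that each individual class $\gamma^s_t - \gamma^s_{t_0} \in CH_s(X)_{\mathrm{alg}}$ lies in the image of $(\Gamma_s)_*$ gives no control over how the witnessing zero-cycle on $C_s$ varies with $t \in \widetilde{D}$, and without such control you cannot spread out to a single correspondence $\gamma_s$ supported on $D^s \times \Gamma_{s+1}$ such that $\gamma^s - \gamma_s$ vanishes over a dense open subset of $\widetilde{D}$; your ``a Bloch--Srinivas-type argument should show'' is exactly the assertion that needs proof. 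The fix---and the only content the paper adds to the citation of Esnault--Levine---is that the degree-zero part of $ch$ lands in $CH_s(X)_{\mathrm{alg}}$, which by the representability hypothesis carries the structure of an abelian variety; a homomorphism from $CH_0(\widetilde{D})$ of this kind into an abelian variety therefore factors through $\mathrm{Alb}_{\widetilde{D}}$, after which the family $t \mapsto \gamma^s_t$ is governed by a morphism of abelian varieties and can be realized, up to isogeny, by a correspondence through the fixed curve $C_s$. You should replace the ``divisibility of the Jacobian'' paragraph by this Albanese factorization; the rest of your sketch then does go through as in Esnault--Levine.
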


\begin{proof}
  The proof is the same as the proof of \cite[lemma 1.1]{EL} once one
  remarks that the map $ch: CH_0(\tilde{D}) \r CH^n(X)$ on page 207
  has image contained in $CH^n(X)_{\alg}$ and therefore factors
  through the Albanese map $CH_0(\tilde{D}) \r
  \mathrm{Alb}_{\tilde{D}}$, because $CH^n(X)_{\alg}$ is
  representable and has thus the structure of an abelian variety.
\end{proof}

\begin{corollary} \label{rep_inj} Assume $CH_i(X)_{\alg}$ is
  representable for all $i \leq s$. Then the Abel-Jacobi maps $AJ_i :
  CH_i(X)_{\hom} \r J_i(X) \otimes \Q$ are injective for all $i \leq
  s$.
\end{corollary}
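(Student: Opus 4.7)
The plan is to apply Theorem~\ref{decdiagonal}, act with the diagonal decomposition $\Delta_X = \gamma_0 + \cdots + \gamma_s + \gamma^{s+1}$ on any cycle $z \in CH_i(X)_\hom$ satisfying $AJ_i(z) = 0$ (with $i \leq s$), prune the resulting sum to a single surviving summand, and then use Proposition~\ref{Scholl} to reduce injectivity of $AJ_i$ to the classical Abel--Jacobi isomorphism $CH^1_\hom \cong \mathrm{Pic}^0 \otimes \Q$ on divisor classes of a smooth projective $(i+1)$-fold.

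The pruning step goes purely by dimension. A standard moving argument shows that $(\gamma^{s+1})_*z = 0$ because $\dim D^{s+1} = d-s-1 < d-i$; that $(\gamma_j)_*z = 0$ for $j > i$ because the pullback of $z$ to $\widetilde{D^j}$ has negative dimension; and that $(\gamma_j)_*z = 0$ for $j < i-1$ because the intermediate target $CH_i(\widetilde{\Gamma_{j+1}})$ vanishes by dimension. For the boundary index $j = i-1$, the term $(\gamma_{i-1})_*z$ lies in the image of $CH_i(\widetilde{\Gamma_i})$, which is freely generated over $\Q$ by the fundamental classes of the components of $\widetilde{\Gamma_i}$; since the cycle class map is injective on $CH_i$ of an $i$-dimensional smooth projective variety and since the operations $i_{i-1}^*$ and $(\widetilde{\gamma}_{i-1})_*$ preserve homological triviality, this combination is forced to vanish. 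Only the $j=i$ summand survives, yielding $z = k_{i,*}w$ with $w := (\widetilde{\gamma}_i)_*(i_i^* z) \in CH^1(\widetilde{\Gamma_{i+1}})_\hom = \mathrm{Pic}^0(\widetilde{\Gamma_{i+1}}) \otimes \Q$.

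Next, by Proposition~\ref{Scholl}, after the standard normalization by base $0$-cycles on each factor, the divisor correspondence $\widetilde{\gamma}_i \in CH^1(\widetilde{D^i} \times \widetilde{\Gamma_{i+1}})$ corresponds to a morphism of abelian varieties $\phi \colon \mathrm{Alb}(\widetilde{D^i}) \to \mathrm{Pic}^0(\widetilde{\Gamma_{i+1}})$ whose action on $CH_0(\widetilde{D^i})_\hom$ factors through the Albanese map $a$, so that $w = \phi(a(i_i^* z))$. Functoriality of the Abel--Jacobi map then gives $AJ_i(z) = k_{i,*}(AJ(w)) = k_{i,*}(w)$ in $J_i^a(X) \otimes \Q$, where we identify $\mathrm{Pic}^0(\widetilde{\Gamma_{i+1}}) = J_i(\widetilde{\Gamma_{i+1}})$ and $k_{i,*}$ denotes the induced map on intermediate Jacobians. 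The hypothesis $AJ_i(z) = 0$ thus becomes $k_{i,*}(w) = 0$ in $J_i^a(X) \otimes \Q$.

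The main obstacle is to lift this vanishing from the intermediate Jacobian back to $CH_i(X)$: equivalently, one must show that the cycle-level pushforward $\mathrm{Pic}^0(\widetilde{\Gamma_{i+1}}) \otimes \Q \to CH_i(X)_\alg$ and its composition with $AJ_i$ into $J_i^a(X) \otimes \Q$ have the same kernel, which reduces to injectivity of $AJ_i\colon CH_i(X)_\alg \to J_i^a(X) \otimes \Q$. This is where the representability hypothesis enters essentially: the surjection $CH_0(C)_\hom \cong J(C) \otimes \Q \twoheadrightarrow CH_i(X)_\alg$ supplied by representability endows $CH_i(X)_\alg$ with the structure of a quotient of an abelian variety over $\Q$, and comparing this presentation with $J_i^a(X)$ viewed as the algebraic representative of $CH_i(X)_\alg$ forces $AJ_i$ on the algebraic part to be an isomorphism of $\Q$-vector spaces, closing the argument.
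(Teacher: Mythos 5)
Your use of theorem \ref{decdiagonal} and the dimension-count pruning down to the single surviving term $z=(\gamma_i)_*z=k_{i,*}w$ with $w\in CH^1(\widetilde{\Gamma}_{i+1})_{\hom}\otimes\Q=\mathrm{Pic}^0(\widetilde{\Gamma}_{i+1})\otimes\Q$ match the paper's argument. The gap is in your last two paragraphs: you apply Abel--Jacobi functoriality only to the second half of the correspondence, obtaining $AJ_i(z)=k_{i,*}(AJ(w))$, and are then forced to ``invert'' $k_{i,*}$ at the level of cycles, i.e.\ to prove that $AJ_i\colon CH_i(X)_{\alg}\r J_i^a(X)\otimes\Q$ is injective. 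That statement does not follow from representability by the argument you sketch: a surjection $J(C)\otimes\Q\twoheadrightarrow CH_i(X)_{\alg}$ puts no constraint on the kernel of the subsequent map to $J_i^a(X)\otimes\Q$ (the kernel of $\Gamma_*$ could be strictly smaller than the kernel of $AJ_i\circ\Gamma_*$), and $J_i^a(X)$ is defined in the paper as the \emph{image} of $AJ_i$, not as a universal algebraic representative, so there is no universal property to compare against. In fact the remark immediately following the corollary states explicitly that the author cannot prove injectivity of $AJ_i$ on $CH_i(X)_{\alg}$ from representability of $CH_i(X)_{\alg}$; your proposed closing step is precisely that open statement, so the argument is circular.

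The correct chase runs the functoriality in the other direction. Since $w=(\widetilde{\gamma}_i)_*(i_i^*z)$ is the image of $z$ under a correspondence, functoriality of the Abel--Jacobi map (which the paper deduces from functoriality of the Deligne cycle class map) gives $AJ(w)=0$ in $J_i(\widetilde{\Gamma}_{i+1})\otimes\Q=\mathrm{Pic}^0(\widetilde{\Gamma}_{i+1})\otimes\Q$. Now use that $AJ$ is injective on $CH^1(\widetilde{\Gamma}_{i+1})_{\hom}\otimes\Q$ --- Abel's theorem for divisors on the smooth projective $(i+1)$-fold $\widetilde{\Gamma}_{i+1}$, which is exactly the middle vertical isomorphism in the paper's diagram. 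This forces $w=0$, hence $z=k_{i,*}w=0$. In other words, one only ever needs injectivity of the Abel--Jacobi map on the auxiliary variety $\widetilde{\Gamma}_{i+1}$, where it is classical, and never on $X$ itself. With that replacement your pruning argument does yield the corollary.
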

\begin{proof} By assumption made on $CH_*(X)_{\alg}$, the diagonal
  $\Delta_X$ admits a decomposition as in theorem \ref{decdiagonal}.
  For all $i \leq s$, let $\widetilde{\Gamma}_{i+1}$ be a
  desingularization of $ \Gamma_{i+1}$. The action of the
  correspondence $\gamma_i$ on $CH_j(X)_\hom$ then factors through
  $CH_j(\widetilde{\Gamma}_{i+1})_{\hom}$.  For dimension reasons
  $\gamma_i$ acts possibly non trivially only on $CH_i(X)$ and
  $CH_{i+1}(X)$. Also for dimension reasons, the correspondence
  $\gamma^{s+1}$ acts trivially on $CH_i(X)$ for $i \leq s$. Therefore
  the cycle $\gamma_i$ acts non trivially on $CH_j(X)_{\hom}$ only if
  $i=j$. Thus the action of $\gamma_i$ on $CH_i(X)_{\hom}$ is
  identity.  Finally, by functoriality of the algebraic Abel-Jacobi
  map, we have the following commutative diagram for all $i \leq s$
  \begin{center} $ \xymatrix{ CH_i(X)_{\hom} \ar[d]^{AJ_i} \ar[r] &
      CH_i(\widetilde{\Gamma}_{i+1})_{\hom} \ar[d]^{\simeq}
      \ar[r] &  CH_i(X)_{\hom} \ar[d]^{AJ_i} \\
      J_i(X) \ar[r] & J_i(\widetilde{\Gamma}_{i+1}) \ar[r] & J_i(X).
    }$ \end{center} The composition of the two maps on each row is
  induced by $\gamma_i$ and is equal to identity up to torsion. A
  diagram chase then shows that $AJ_i : CH_i(X)_{\alg} \r
  J_i^a(X)\otimes \Q$ is injective.
\end{proof}

\begin{remark}
  Given $i$, I cannot prove that if $CH_i(X)_\alg$ is representable
  then the Abel-Jacobi map $AJ_i : CH_i(X)_{\alg} \r J_i(X) \otimes
  \Q$ restricted to algebraically trivial cycles is injective.
\end{remark}
\begin{remark} \label{gendiag} Bloch and Srinivas proved \cite[Theorem
  1(i)]{BS} that if $CH_0(X)_{\alg}$ is representable then so is
  $CH^2(X)_{\alg}$. A generalized decomposition of the diagonal shows
  that if $CH_0(X)_{\alg}, \ldots, CH_s(X)_{\alg}$ are representable
  then $CH^2(X)_{\alg}, \ldots, CH^{2+s}(X)_{\alg}$ are also
  representable. Therefore, if $d$ is the dimension of $X$, it is
  enough to know that $CH_0(X)_{\alg}, \ldots, CH_{\lfloor d/2\rfloor
    -1}(X)_{\alg}$ are representable in order to deduce that
  $CH_*(X)_{\alg}$ is representable.
\end{remark}

\subsection{From representability to finite dimensionality : proof of
  $4 \Rightarrow 1$ in theorem \ref{theorem2}}

In order to prove the implication $4 \Rightarrow 1$ of theorem
\ref{theorem2}, we again use our projectors $\Pi_{2i,i}$ and
$\Pi_{2i+1,i}$, together with the following lemma which appears in
\cite[lemma 1]{GG}.

\begin{lemma} \label{1} Let $N$ be a Chow motive over a field $k$ and
  let $\Omega$ be a universal domain over $k$, i.e. an algebraically
  closed field of infinite transcendence degree over $k$. If
  $CH_*(N_\Omega)=0$, then $N=0$. \qed
\end{lemma}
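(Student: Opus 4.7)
The strategy is first to reduce to the case where $k$ is itself a universal domain, and then to show that $p$, as a cycle class on $X\times X$, is supported on $Z\times V$ for proper closed subvarieties $Z,V\subsetneq X$, finally concluding by induction on $\dim X$. For the reduction: with rational coefficients, the pullback $CH_d(X\times X)_\Q\to CH_d((X\times X)_\Omega)_\Q$ is injective, by the usual norm transfer for the algebraic part of $\Omega/k$ and by specialization at rational points on a smooth model for the purely transcendental part (since $\Omega$ is a filtered colimit of smooth finitely generated $\bar k$-algebras and $\bar k$ is algebraically closed). So it suffices to prove the lemma assuming $k=\Omega$.

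Assume then $k=\Omega$ and $N=(X,p,n)$ with $d=\dim X$ and $p\in CH_d(X\times X)_\Q$ idempotent. Because $\Omega$ is a universal domain, $\Omega(X)$ embeds into $\Omega$, and the image of the generic point of $X$ under such an embedding is a ``very general'' $\Omega$-point $x_0\in X(\Omega)$. The hypothesis $CH_*(N_\Omega)=0$ gives $p_*[x_0]=0$ in $CH_0(X_\Omega)_\Q$. By the Colliot-Th\'el\`ene--Lecomte injectivity $CH_0(X_{\Omega(X)})_\Q\hookrightarrow CH_0(X_\Omega)_\Q$ this upgrades to $p_*[\eta]=0$ in $CH_0(X_{\Omega(X)})_\Q$, where $\eta$ denotes the generic point. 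Identifying $p_*[\eta]$ with the flat pullback of $p$ to the generic fibre of the second projection $\pi_2:X\times X\to X$ and invoking the localization sequence for Chow groups, we conclude that $p$ is rationally equivalent to a cycle supported on $X\times V$ for some proper closed $V\subsetneq X$. A symmetric argument, applied to $^{t}p$ via the dual vanishing $CH_*(N^\vee_\Omega)=0$ implicit in the reading of the hypothesis (or obtained from the rigid tensor structure on Chow motives), shows that $p$ is also supported on $Z\times X$ for some $Z\subsetneq X$, hence on $Z\times V$.

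Finally, induct on $d$. The base case $d=0$ is immediate: $CH_0(X\times X)_\Q$ is then a finite-dimensional matrix algebra over $\Q$, and an idempotent whose action on $CH_0(X)_\Q$ is zero must itself vanish. For $d\geq 1$, desingularize to obtain $\widetilde Z,\widetilde V$ of dimension strictly less than $d$, together with a cycle $\widetilde p\in CH_d(\widetilde Z\times\widetilde V)_\Q$ whose pushforward along $\widetilde Z\times\widetilde V\to X\times X$ recovers $p$. The motive $N$ is now a direct summand of $\h(\widetilde Z)\otimes\h(\widetilde V)$, whose underlying variety has smaller dimension, and the inductive hypothesis applied to the projector $\widetilde p$ gives $\widetilde p=0$, whence $p=0$.

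The main obstacle is the last step: transferring the vanishing of $CH_*(N_\Omega)$ through the induction when $N$ is re-presented on the smaller product $\widetilde Z\times\widetilde V$. The correspondence action of $p$ factors through $CH_*(\widetilde Z_\Omega)$ and $CH_*(\widetilde V_\Omega)$ only after composition with the desingularization maps, so the vanishing on $CH_*(X_\Omega)$ does not \emph{a priori} yield vanishing of the lifted idempotent on the smaller varieties. A cleaner formulation stays inside the rigid tensor category of Chow motives over $\Omega$: the Lefschetz twists $\h(Y)(i)$ of smooth projective varieties $Y$ generate, and $CH_*(N_\Omega)=0$ together with the universal domain property forces $\Hom(\h(Y)(i),N_\Omega)=0$ for every such $Y$ and $i$, which directly yields $N=0$.
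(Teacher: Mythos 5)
The paper does not prove this lemma at all: it is quoted verbatim from \cite[lemma 1]{GG} with the proof omitted, so your attempt has to stand on its own. Its overall shape (generic point, localization, support of $p$, induction on dimension) is the right one, but two steps are genuinely broken. First, the ``symmetric argument applied to ${}^tp$'' is unjustified: the hypothesis $CH_*(N_\Omega)=0$ says that $p_*$ annihilates $CH_*(X_\Omega)$ and gives no control whatsoever on $({}^tp)_*CH_0(X_\Omega)$, i.e.\ on $CH_*(N^\vee_\Omega)$; calling the dual vanishing ``implicit in the hypothesis'' is circular, since it follows only after the lemma is proved (when $p=0$). Fortunately this step is also unnecessary: one-sided support already lowers the dimension. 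Second --- and you flag this yourself --- your induction does not close: lifting $p$ to a cycle $\widetilde p$ on the product of resolutions does not hand you an idempotent to which the hypothesis applies. The missing idea is the retraction trick of proposition \ref{prop}: once $p$ is supported on $Z\times X$ with $\dim Z<d$, it factors as $\h(X)\r \h(\widetilde Z)(\ast)\r N$, say $p=\alpha\circ\beta$, and idempotency of $p$ makes $q:=\beta\circ\alpha\circ\beta\circ\alpha$ an idempotent on $\widetilde Z$ with $N\simeq(\widetilde Z,q,\ast)$. Because this is an isomorphism of motives, $CH_*(N_\Omega)=0$ transfers verbatim to the new presentation, and induction on the dimension of the supporting variety goes through.

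Two further points. Your closing ``cleaner formulation'' --- that $CH_*(N_\Omega)=0$ forces $\Hom(\h(Y)(i),N_\Omega)=0$ for all $Y$ and $i$ --- is indeed the right statement, but as written it is an assertion essentially equivalent to the lemma; it must itself be proved by induction on $\dim Y$ via the same generic-point-plus-retraction mechanism (restrict a morphism $\h(Y)(i)\r N$ to the generic fibre over $Y$, observe it lands in $CH_*(N_{k(Y)})=0$, deduce it factors through a divisor of $Y$, and induct). Finally, a small but real slip in your reduction: after passing to $k=\Omega$ you can no longer embed $\Omega(X)$ into $\Omega$, since a universal domain over $k$ is not a universal domain over itself. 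Keep $X$ over $k$, embed $k(X)\hookrightarrow\Omega$ over $k$, and use the injectivity of $CH_0(X_{k(X)})_\Q\r CH_0(X_\Omega)_\Q$ --- which is in fact what you invoke --- to deduce $p_*[\eta]=0$ from the vanishing at the corresponding $\Omega$-point.
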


\noindent \textit{Proof of $4 \Rightarrow 1$.}  If $CH_*(X_\C)_\alg$
is representable then corollary \ref{rep_inj} shows that the Deligne
cycle class maps $cl_i^{\mathcal{D}}$ are all injective. By Esnault
and Levine's theorem \ref{ELdeligne}, the Deligne cycle class maps
$cl_i^{\mathcal{D}}$ and the cycle class maps $cl_i$ are surjective
for all $i$.  Now lemma \ref{essential} shows that $H_{2i}(X) =
N^iH_{2i}(X)$ and $H_{2i+1}(X) = N^iH_{2i+1}(X)$ for all $i$.  Thanks
to lemma \ref{pairing-proj} we can therefore apply theorems
\ref{projector1}, \ref{projector2} and \ref{projector3} to cut out the
motive $\mathds{1} \oplus \h_1(\mathrm{Alb}_X) \oplus \L^{\oplus
  b_{2}} \oplus \h_1(J^a_1(X))(1) \oplus (\L^{\otimes 2})^{\oplus
  b_{4}} \oplus \cdots \oplus \h_1(J^a_{d-1}(X))(d-1) \oplus
\L^{\otimes d}$ from $\h(X)$. These two motives have same rational
Chow groups when the base field is extended to $\C$, lemma \ref{1}
implies they are equal. \qed \medskip

As a corollary, we obtain a result proved independently by Kimura
\cite{Kimura2} (Kimura's result works more generally for any pure Chow
motive over $\C$).

\begin{proposition} Let $X$ be a $d$-dimensional smooth projective
  variety over $k$. If $CH_*(X_\C)$ is a finite dimensional
  $\Q$-vector space, then $$\h(X) \simeq \bigoplus_{i=0}^d
  (\L^{\otimes i})^{\oplus b_{2i}}.$$ Moreover, the cycle class maps
  $cl_i : CH_i(X) \r H_{2i}(X)$ are all isomorphisms
 \end{proposition}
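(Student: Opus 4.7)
The plan is to derive this as a direct corollary of Theorem \ref{theorem2}, specifically the implication $(4 \Rightarrow 1)$, once finite-dimensionality has been translated into representability.

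First, I would check that the hypothesis $\dim_\Q CH_*(X_\C) < \infty$ implies that $CH_i(X_\C)_\alg$ is representable in the sense of \S 3 for each $i$. By definition of algebraic equivalence, $CH_i(X_\C)_\alg$ is the sum of the images of all correspondences $\Gamma_* : CH_0(C)_\alg \r CH_i(X_\C)_\alg$ ranging over smooth projective curves $C$ and $\Gamma \in CH_{i+1}(C \times X_\C)$. Since the target is finite-dimensional over $\Q$, finitely many of these images suffice to span it; taking $C$ to be the disjoint union of the corresponding finitely many curves and adding up the $\Gamma$'s yields a single curve $C$ and correspondence $\Gamma$ with $\Gamma_* CH_0(C)_\alg = CH_i(X_\C)_\alg$. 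Hence condition $(4)$ of Theorem \ref{theorem2} holds.

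Second, applying $(4 \Rightarrow 1)$ of Theorem \ref{theorem2} gives
\[ \h(X) \simeq \mathds{1} \oplus \h_1(\mathrm{Alb}_X) \oplus \L^{\oplus b_2} \oplus \h_1(J^a_1(X))(1) \oplus \cdots \oplus \h_1(J^a_{d-1}(X))(d-1) \oplus \L^{\otimes d}. \]
I would then argue that each $\h_1(J^a_i(X))(i)$ summand must vanish. Base changing to $\C$ and applying $CH_i(-)$, this summand contributes $J^a_i(X_\C)(\C) \otimes_\Z \Q$ as a direct summand of the finite-dimensional $\Q$-vector space $CH_i(X_\C)$. But any positive-dimensional abelian variety $A$ over $\C$ has $A(\C) \otimes \Q$ of uncountable $\Q$-dimension (as $A(\C)$ is an uncountable divisible group), so we must have $J^a_i(X_\C) = 0$ and hence $J^a_i(X) = 0$, and by the same argument $\mathrm{Alb}_X = 0$. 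Since these were the only summands contributing to odd cohomology, it follows that $b_{2j+1} = 0$ for all $j$, and the decomposition collapses to $\h(X) \simeq \bigoplus_{i=0}^d (\L^{\otimes i})^{\oplus b_{2i}}$.

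Finally, applying the covariant convention $CH_i(M) = \Hom(\L^{\otimes i}, M)$ of \cite{KMP} to this decomposition yields $CH_i(X) \simeq \Q^{b_{2i}}$, matching $H_{2i}(X)$ in dimension; the identification is induced by $cl_i$ (since the isomorphism $\h(X) \simeq \bigoplus (\L^{\otimes i})^{\oplus b_{2i}}$ is compatible with the cycle class map), so $cl_i$ is an isomorphism. The only mildly delicate step is the first, where one must recognize that finite-dimensionality of the ambient space forces a finite collection of generating curves to suffice; everything else is a formal consequence of Theorem \ref{theorem2} and the fact that $\C$-points of a nonzero abelian variety form a $\Q$-vector space of uncountable dimension.
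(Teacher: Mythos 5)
Your proof is correct and follows essentially the same route as the paper: deduce representability from finite-dimensionality, apply the implication $(4 \Rightarrow 1)$ of Theorem \ref{theorem2}, and then kill each $\h_1(J^a_i(X))(i)$ summand by observing that a nonzero abelian variety $J$ over $\C$ has $J(\C)\otimes\Q$ infinite dimensional, which would contradict the finite-dimensionality of $CH_*(X_\C)$. The only difference is that you spell out the (easy) representability step and the final claim about $cl_i$ in more detail than the paper does.
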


 \begin{proof} Indeed, if $CH_*(X_\C)$ is a finite dimensional
   $\Q$-vector space then it is representable. Apply theorem
   \ref{theorem2} to see that $\h(X)$ is a direct sum of Lefschetz
   motives and twisted $\h_1$'s of abelian varieties. Now for a
   complex abelian variety $J$, $CH_0(\h_1(J)) = J \otimes \Q$ which
   is an infinite dimensional $\Q$-vector space if $J \neq 0$.
   Therefore $\h(X)$ is a direct sum of Lefschetz motives only.
 \end{proof}

\section{Chow-K\"unneth decompositions} \label{example}

A smooth projective variety $X$ of dimension $d$ is said to have a
Chow-K\"unneth decomposition (CK decomposition for short) if there
exist mutually orthogonal idempotents $\Pi_0, \Pi_1, \ldots \Pi_{2d}
\in CH_d(X \times X)$ adding to the identity $\Delta_X$ such that
$(\Pi_i)_*H_*(X)=H_i(X)$ for all $i$.  In this section, we wish to
give explicit examples of varieties having a Chow-K\"unneth
decomposition. For this purpose we use the projectors of theorems
\ref{projector1} and \ref{projector2}.  Along the way we are able to
establish Grothendieck's standard conjectures and Kimura's finite
dimensionality conjecture in some new cases. In \cite{Vial2}, we prove
Murre's conjectures for all the varieties considered in \S\S4.2 and
4.3 (and some others as well).

\subsection{The basic theorem}









Here, $X$ denotes a smooth projective variety of dimension $d$. All
the varieties for which we will be able to show that they admit a CK
decomposition will also satisfy the motivic Lefschetz conjecture.

\begin{definition}
  The variety $X$ is said to satisfy \textit{the motivic Lefschetz
    conjecture} if it admits a CK decomposition $\{\Pi_i\}_{0 \leq i
    \leq 2d}$ such that, for all $i > d$ the morphism of Chow
  motives $(X,\Pi_i,0) \r (X,\Pi_{2d-i}, i-d)$ given by intersecting
  $i-d$ times with a hyperplane section is an isomorphism.
\end{definition}

It is immediate to see that if $X$ satisfies the motivic Lefschetz
conjecture then it satisfies the Lefschetz standard conjecture. Since
Grothendieck's standard conjectures for $X$ reduce to the standard
Lefschetz conjecture for $X$ in characteristic zero
\cite[5.4.2.2]{An}, we get that $X$ satisfies all of Grothendieck's
standard conjectures.

The key result of this section is the following.

\begin{theorem} \label{basictheorem} If $H_i(X) = N^{\lfloor i/2
    \rfloor}H_i(X)$ for all $i>d$, then $X$ has a Chow-K\"unneth
  decomposition $\{P_i\}_{0 \leq i \leq 2d}$ where the idempotents
  $P_i$ for $i \neq d$ satisfy all the properties listed in theorems
  \ref{projector1} and \ref{projector2}. In particular the motivic
  Lefschetz conjecture holds for $X$ and hence, also do the standard
  conjectures hold for $X$.
\end{theorem}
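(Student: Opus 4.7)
The plan is to invoke theorems \ref{projector1} and \ref{projector2} at every degree $j\neq d$, to orthogonalize the resulting idempotents by the non commutative Gram--Schmidt of lemma \ref{linalg} as in the proof of theorem \ref{projector3}, and then to define $P_d$ as the complement.

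First, for every $j\neq d$ the pairing ${N}^{\lfloor (2d-j)/2\rfloor}H_{2d-j}(X)\times {N}^{\lfloor j/2\rfloor}H_j(X)\to\Q$ is non degenerate: exactly one of $j$ and $2d-j$ lies strictly above $d$, and in that degree the hypothesis $H_i(X)=N^{\lfloor i/2\rfloor}H_i(X)$ forces the coniveau subgroup to equal the full homology group, so the second bullet of lemma \ref{pairing-proj} applies. Hence theorem \ref{projector1} (when $j$ is even) or theorem \ref{projector2} (when $j$ is odd) produces an idempotent $\Pi_j\in CH_d(X\times X)$ with all the listed properties, with $\Pi_{2d-j}={}^t\Pi_j$, and with the Lefschetz isomorphism $(X,\Pi_j,0)\to(X,\Pi_{2d-j},j-d)$ whenever $j>d$.

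Next, the partial orthogonality relations established in \S2.4 (in particular remark \ref{remark}) give $\Pi_j\circ \Pi_i=0$ whenever $j<i$ in $\{0,\dots,2d\}\setminus\{d\}$, which is precisely the input hypothesis $\mathcal{P}_0$ of the inductive scheme in the proof of theorem \ref{projector3}. Iterating lemma \ref{linalg} finitely many times on the family $\{\Pi_j\}_{j\neq d}$ then yields mutually orthogonal idempotents $\{P_j\}_{j\neq d}$, and transcribing the proof of theorem \ref{projector3} verbatim shows that each $P_j$ still induces the correct projector on homology, still satisfies $P_{2d-j}={}^tP_j$, and, via propositions \ref{lef1} and \ref{lef2}, that $L^{j-d}$ still induces an isomorphism $(X,P_j,0)\to(X,P_{2d-j},j-d)$ for $j>d$. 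Setting $P_d:=\Delta_X-\sum_{j\neq d}P_j$ automatically produces an idempotent orthogonal to every $P_j$ whose homological action is the projection onto the complement of $\bigoplus_{j\neq d}H_j(X)$, namely onto $H_d(X)$; this gives the desired Chow--K\"unneth decomposition $\{P_i\}_{0\le i\le 2d}$, and the Lefschetz isomorphisms for $i>d$ are the motivic Lefschetz conjecture, from which the standard conjectures follow in characteristic zero.

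The one step that merits care is the orthogonalization, but it is essentially a verbatim copy of its counterpart in the proof of theorem \ref{projector3}: lemma \ref{linalg} depends only on the abstract algebraic structure of the family, and the preservation of the Lefschetz isomorphism through Gram--Schmidt only involves products within the pairs $(j,2d-j)$ disjoint from $d$, so the absence of $\Pi_d$ from the Gram--Schmidt input causes no difficulty.
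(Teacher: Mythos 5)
Your proof is correct and follows essentially the same route as the paper: non-degeneracy of the off-middle pairings from the hypothesis via the second case of lemma \ref{pairing-proj}, theorems \ref{projector1} and \ref{projector2} in each degree $j\neq d$, the non-commutative Gram--Schmidt of lemma \ref{linalg} starting from the relations of remark \ref{remark}, and $P_d$ defined as the complement. The one divergence is the middle degree: the paper first observes that the off-middle Lefschetz isomorphisms give the Lefschetz standard conjecture for $X$, hence by lemma \ref{pairing-proj} the non-degeneracy of the pairing on $N^{\lfloor d/2\rfloor}H_d(X)$, and then applies theorem \ref{projector3} verbatim (whose hypothesis requires \emph{all} pairings, including the middle one, to be non-degenerate), whereas you rerun the orthogonalization only on the subfamily $\{\Pi_j\}_{j\neq d}$ and never touch the middle pairing. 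Your shortcut is legitimate, since lemma \ref{linalg} is purely formal and the vanishings needed to preserve the Lefschetz isomorphisms (remark \ref{remark} and lemma \ref{leflemma} for $i>d$) never involve $\Pi_d$; the paper's detour simply has the side benefit of recording the standard conjecture and the middle pairing's non-degeneracy along the way.
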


\begin{proof} If $H_i(X) = N^{\lfloor i/2 \rfloor}H_i(X)$ for all
  $i>d$, then by Poincar\'e duality the pairings $N^{\lfloor
    \frac{i}{2} \rfloor}H_i(X) \times N^{\lfloor \frac{2d-i}{2}
    \rfloor}H_{2d-i}(X) \r \Q$ are non degenerate for all $i>d$. The
  motivic Lefschetz isomorphisms of theorems \ref{projector1} and
  \ref{projector2} then imply that $X$ satisfies the Lefschetz
  standard conjecture. Therefore, by lemma \ref{pairing-proj}, the
  pairing $N^{\lfloor d/2 \rfloor}H_d(X) \times N^{\lfloor d/2
    \rfloor}H_d(X) \r \Q$ is also non degenerate. By theorem
  \ref{projector3}, we get mutually orthogonal idempotents
  $\{\Pi_i\}_{0 \leq i \leq 2d}$. Let's put $P_i := \Pi_i$ for $i \neq
  d$ and $P_d := \Delta_X - \sum_{i\neq d}\Pi_i$. Then $\{P_i\}_{0
    \leq i \leq 2d}$ is the required CK decomposition for $X$.
\end{proof}

\subsection{Some examples of varieties having a CK decomposition and
  satisfying the motivic Lefschetz conjecture}

An immediate consequence to theorem \ref{basictheorem} is the
following.

\begin{corollary} \label{C} Let $Y$ be a $3$-fold with $H^2(Y,O_Y)=0$,
  e.g. a Calabi-Yau $3$-fold. Then $Y$ has a CK decomposition and the
  motivic Lefschetz conjecture holds for $Y$.
\end{corollary}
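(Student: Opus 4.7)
The plan is to verify the hypothesis of Theorem \ref{basictheorem}, namely that $H_i(Y) = N^{\lfloor i/2 \rfloor}H_i(Y)$ for all $i > d = 3$, and then invoke that theorem directly. The three cases to check are $i = 4, 5, 6$.

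The case $i = 6$ is immediate: $H_6(Y) \cong \Q$ is spanned by the fundamental class, which is the image of $[Y] \in CH_3(Y)$ under $cl_3$, so $N^3 H_6(Y) = H_6(Y)$ trivially. The case $i = 5$ is the general equality $N^{d-1}H_{2d-1}(X) = H_{2d-1}(X)$ noted in the introduction just after the statement of Theorem \ref{projector2}; for $d = 3$ this reads $N^2 H_5(Y) = H_5(Y)$. (Concretely, one may take $C \hookrightarrow Y$ to be a smooth curve cut out by hyperplane sections, apply weak Lefschetz to surject $H_1(C) \twoheadrightarrow H_1(Y)$, and transport through the Lefschetz isomorphism $L^2 \colon H_5(Y) \xrightarrow{\sim} H_1(Y)$ induced by the algebraic correspondence $L^2$.)

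The only substantive case is $i = 4$, where the hypothesis $H^2(Y, \mathcal{O}_Y) = 0$ enters. By Hodge symmetry $H^0(Y, \Omega^2_Y) \cong \overline{H^2(Y, \mathcal{O}_Y)} = 0$, so the Hodge decomposition collapses to $H^2(Y, \C) = H^{1,1}(Y)$; hence every class in $H^2(Y, \Q)$ is a Hodge class, and by the Lefschetz $(1,1)$ theorem every such class is the class of a divisor. Via Poincaré duality $H_4(Y) \cong H^2(Y)$ together with the identification $CH_2(Y) = CH^1(Y)$, this gives $N^2 H_4(Y) = H_4(Y)$. For a Calabi--Yau $3$-fold the hypothesis $H^2(Y, \mathcal{O}_Y) = 0$ is automatic, since $h^{0,2}(Y) = h^{2,0}(Y) = 0$ by definition.

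Once these three equalities are in hand, Theorem \ref{basictheorem} applies and delivers both the Chow--K\"unneth decomposition $\{P_i\}_{0 \le i \le 6}$ and the motivic Lefschetz isomorphisms for $Y$, which in turn imply the standard conjectures. There is no real obstacle: the only nontrivial input is recognising that the Lefschetz $(1,1)$ theorem is what converts the sheaf-cohomological hypothesis $H^2(Y, \mathcal{O}_Y) = 0$ into the coniveau statement $N^2 H_4(Y) = H_4(Y)$ required by Theorem \ref{basictheorem}.
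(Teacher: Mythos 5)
Your proof is correct and follows the same route as the paper: the paper's own argument is precisely that $H^2(Y,\mathcal{O}_Y)=0$ plus the Lefschetz $(1,1)$-theorem gives $H_4(Y)=N^2H_4(Y)$, whence $H_i(Y)=N^{\lfloor i/2\rfloor}H_i(Y)$ for all $i>3$ and Theorem \ref{basictheorem} applies. Your explicit verification of the (trivial) cases $i=5,6$ is just a spelled-out version of what the paper leaves implicit.
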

\begin{proof}
  By the Lefschetz $(1,1)$-theorem, $H^2(Y,O_Y)=0$ implies
  $H_4(Y)=N^2H_4(Y)$. Thus $H_i(X) = N^{\lfloor i/2 \rfloor}H_i(X)$ for all
  $i>3$.
\end{proof}


In theorems \ref{B} and \ref{C} below, in addition to proving that
some $X$ has a CK decomposition, we give some information on the
support of the middle CK projector of $X$. Such information will be
used in \cite{Vial2} to further prove Murre's conjectures in the cases
covered by the theorems. For this purpose we need the following lemma
that appears in \cite[lemma 1.2]{Vial1} and which is essentially due
to Kahn and Sujatha \cite{KahnSujatha}.

\begin{lemma} \label{decbir} Let $M = (X,p,0)$ be a Chow motive over
  $k$ and let $\Omega$ be a universal domain over $k$. If
  $CH_0(N_\Omega)=0$, then there exists a Chow motive $N=(Y,q,0)$ such
  that $M=(Y,q,1)$. \qed
\end{lemma}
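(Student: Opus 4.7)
The plan is to apply a Bloch--Srinivas type argument to the idempotent $p$ itself. Since $p^{2}=p$, the hypothesis $CH_0(M_\Omega) = p_{*}CH_0(X_\Omega) = 0$ is equivalent to $p_{*}$ acting as zero on $CH_0(X_\Omega)$; by injectivity of base change on $CH_0$ with $\Q$-coefficients between algebraically closed extensions of $k$, this in turn forces the restriction of $p$ to $\Spec k(X)\times_{k} X$ to vanish in $CH_0(X_{k(X)})_\Q$.

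First I would invoke the standard localization exact sequence
$$ \varinjlim_{D \subsetneq X} CH_d(D \times X)_\Q \r CH_d(X \times X)_\Q \r CH_0(X_{k(X)})_\Q \r 0, $$
where the limit runs over proper closed subschemes of $X$. The vanishing above then produces a divisor $D \subset X$ such that $p$ is rationally equivalent to a cycle supported on $D \times X$. Fixing a desingularization $\tau : \tilde D \r X$ with $\dim \tilde D = d-1$, one can lift $p$ to some $\alpha \in CH_d(\tilde D \times X)_\Q$ with $p = (\tau \times \id_X)_{*}\alpha$.

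Next I would interpret this factorization motivically. Following the paper's covariant conventions (compare proposition \ref{Chow2}), a cycle in $CH_d(\tilde D \times X)$ represents a morphism $\h(\tilde D)(1) \r \h(X)$, so $\alpha$ defines $f : \h(\tilde D)(1) \r \h(X)$. The identity $p = (\tau \times \id_X)_{*}\alpha$ translates, via the correspondence calculus, into $p = f \circ g$, where $g : \h(X) \r \h(\tilde D)(1)$ is the transpose of the graph of $\tau$ viewed as a correspondence (its dimension forces precisely a Tate twist by one).

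The remainder is formal, mirroring the trick of proposition \ref{prop}: idempotency of $p = f \circ g$ implies $(g \circ f)^{3} = (g \circ f)^{2}$, so $q := g \circ f \circ g \circ f \in \End(\h(\tilde D)(1))$ is an idempotent, and one checks (using $f \circ q = p \circ f$ and $q \circ g = g \circ p$) that $p \circ f$ and $g \circ p$ are mutually inverse isomorphisms of Chow motives between $(\tilde D,q,1)$ and $(X,p,0)$. Setting $Y:=\tilde D$ concludes the argument. The main obstacle is the first step, namely promoting the vanishing of $p_{*}$ on $CH_0(X_\Omega)$ to the stated support condition on $p$ over $k$, which requires combining the specialization injectivity of $CH_0$ with the localization sequence; once this is in hand, the subsequent motivic bookkeeping with Tate twists is routine.
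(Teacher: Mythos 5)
Your argument is correct: the paper itself gives no proof of this lemma (it is quoted from \cite[Lemma 1.2]{Vial1}, after Kahn--Sujatha), and the proof there is exactly your Bloch--Srinivas-type argument — restrict $p$ to the generic point of the first factor, use injectivity of $CH_0(X_{k(X)})_\Q \to CH_0(X_\Omega)_\Q$ (note $k(X)$ is not algebraically closed, so one needs the standard transfer-plus-specialization form of this injectivity) and localization to support $p$ on $D\times X$, then Lieberman's lemma and the idempotent trick of proposition \ref{prop} to realize $(X,p,0)$ as a direct summand of $\h(\tilde D)(1)$. The twist bookkeeping in the paper's covariant conventions is as you say, so there is nothing to add.
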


\begin{theorem} \label{B} Let $X$ be a smooth projective variety of
  even dimension $d=2n$. If $CH_0(X)_\alg, CH_1(X)_\alg, \ldots,
  CH_{n-2}(X)_\alg$ are representable, then $X$ has a CK decomposition
  $\{\Pi_i\}$ and the motivic Lefschetz conjecture holds for $X$.
  Moreover, the idempotents $\Pi_i$ are as in theorems
  \ref{projector1} and \ref{projector2} for $i \neq d$ and the
  idempotent $\Pi_d$ has a representative supported on $X \times Z$
  with $Z$ a subvariety of $X$ of dimension $n+1$.
\end{theorem}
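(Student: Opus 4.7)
The plan is first to verify the hypothesis of theorem \ref{basictheorem} and then to exploit the decomposition of the diagonal from theorem \ref{decdiagonal} to control the support of the middle projector.

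By remark \ref{gendiag} (Bloch--Srinivas), the hypothesis implies that $CH^2(X)_{\alg},\ldots,CH^n(X)_{\alg}$ are representable, so $CH_i(X)_{\alg}$ is representable for every $i\neq n-1$. Corollary \ref{rep_inj} and theorem \ref{ELdeligne} applied with $s=n-2$ yield surjectivity of $cl_i$ and $cl_i^{\mathcal{D}}$ for every $i\leq n-2$. The analogous argument in codimension, using representability of $CH^j(X)_{\alg}$ for $j\leq n$ together with the transposed decomposition of the diagonal, yields surjectivity of $cl_i$ and $cl_i^{\mathcal{D}}$ for every $i\geq n$. Applying lemma \ref{essential} in this range, both $H_{2i}(X),H_{2d-2i}(X)$ and $H_{2i+1}(X),H_{2d-2i-1}(X)$ become equal to their last-step coniveau filtration, covering every cohomology degree. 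Hence $H_j(X) = N^{\lfloor j/2\rfloor}H_j(X)$ for all $j$, so theorem \ref{basictheorem} applies and produces a Chow--K\"unneth decomposition $\{P_i\}_{0\leq i\leq 2d}$ satisfying the motivic Lefschetz conjecture, with $P_i$ ($i\neq d$) as in theorems \ref{projector1} and \ref{projector2}.

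It remains to control the support of $P_d=\Delta_X-\sum_{i\neq d}P_i$. Theorem \ref{decdiagonal} applied with $s=n-2$ gives
\[
\Delta_X = \gamma_0 + \gamma_1 + \cdots + \gamma_{n-2} + \gamma^{n-1},
\]
where $\gamma^{n-1}$ is supported on $D^{n-1}\times X$ with $\dim D^{n-1}=n+1$, and each $\gamma_i$ is supported on $D^i\times\Gamma_{i+1}$ with $\dim\Gamma_{i+1}\leq n-1$. Transposing yields a companion decomposition with last term ${}^t\gamma^{n-1}$ supported on $X\times D^{n-1}$. From the explicit construction in theorems \ref{projector1} and \ref{projector2}, each $P_i$ with $i<d$ admits a representative supported on $X\times W_i$ with $\dim W_i\leq n$, and each $P_i$ with $i>d$ admits a representative supported on $V_i\times X$ with $\dim V_i\leq n$.

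The main obstacle will be to show $P_d$ admits a representative supported on $X\times Z$ with $\dim Z=n+1$. The plan is to choose the input data of theorems \ref{projector1} and \ref{projector2} (the curves $C_i$, the zero-dimensional varieties, and the correspondences $\Gamma_{2i}$, $\Gamma_{2i+1}$) compatibly with the transposed decomposition of the diagonal, so that the projectors $P_i$ for $i>d$ collectively account for the ${}^t\gamma_i$ pieces of the transposed decomposition. The residue $P_d$ then becomes rationally equivalent to ${}^t\gamma^{n-1}$, which is supported on $X\times D^{n-1}$ as required. Carrying out this matching of supports is the delicate technical step.
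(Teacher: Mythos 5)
The first half of your argument is sound: establishing $H_j(X)=N^{\lfloor j/2\rfloor}H_j(X)$ (you prove it in all degrees, though theorem \ref{basictheorem} only needs it for $j>d$) and invoking theorem \ref{basictheorem} is exactly how the paper obtains the CK decomposition and the motivic Lefschetz isomorphisms. The gap is in the ``Moreover'' clause. Your treatment of the support of $\Pi_d$ is a plan, not a proof, and you say so yourself (``Carrying out this matching of supports is the delicate technical step''). Worse, the plan as stated is unlikely to close: the pieces $\gamma_i$ and $\gamma^{n-1}$ of the decomposition of the diagonal are not idempotents, are not mutually orthogonal, and there is no mechanism forcing the identity $P_d={}^t\gamma^{n-1}$ in $CH_d(X\times X)$ no matter how the curves $C_i$ and the correspondences $\Gamma_j$ are chosen; the cross terms between the two halves of $\Delta_X-\sum_{i\neq d}P_i$ do not cancel for free, and nothing in theorems \ref{projector1} and \ref{projector2} lets you prescribe the projectors to \emph{equal} given non-idempotent cycles.

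The paper's route is different and goes through Chow groups rather than supports. From corollary \ref{rep_inj} and theorem \ref{ELdeligne} one gets that the Abel--Jacobi maps are bijective for $i\leq n-2$, hence $CH_i(X)=(\Pi_{2i}+\Pi_{2i+1})_*CH_i(X)$ in that range, so the middle projector $\Pi_d$ kills $CH_i(X)$ for all $i\leq n-2$. One then applies lemma \ref{decbir} (the Kahn--Sujatha/Bloch--Srinivas untwisting lemma) $n-1$ times to conclude $(X,\Pi_d,0)\simeq(Y,q,n-1)$ for some motive $(Y,q,0)$; writing $\Pi_d=\Pi_d\circ f^{-1}\circ q\circ f\circ\Pi_d$ and counting dimensions then produces a representative of $\Pi_d$ supported on $X\times Z$ with $\dim Z=n+1$. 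You would need to either carry out this Chow-theoretic argument or genuinely complete your support-matching step; as written, the statement about $\Pi_d$ is unproven.
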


\begin{proof}
  By a generalized decomposition of the diagonal (as performed for
  instance by Laterveer \cite[2.1]{Laterveer}), the assumption on the
  Chow groups of $X$ implies that $H_i(X) = N^{\lfloor i/2
    \rfloor}H_i(X)$ for all $i>d$. We can therefore apply theorem
  \ref{basictheorem} to get a CK decomposition $\{\Pi_i\}_{0 \leq i
    \leq 2d}$ for $X$ where the idempotents $\Pi_i$ for $i \neq d$
  satisfy all the properties listed in theorems \ref{projector1} and
  \ref{projector2}. By corollary \ref{rep_inj} if $CH_0(X)_\alg,
  CH_1(X)_\alg, \ldots, CH_{n-2}(X)_\alg$ are representable, then the
  Abel-Jacobi maps $AJ_i : CH_i(X)_\hom \r J_i(X) \otimes \Q$ are
  injective for all $i \leq n-2$. Esnault and Levine's theorem
  \ref{ELdeligne} then implies that the Abel-Jacobi maps are
  bijective. Thanks to the properties of the CK projectors, we thus
  get $CH_i(X) = (\Pi_{2i} + \Pi_{2i+1})_*CH_i(X)$ for all $i \leq
  n-2$. As such, the idempotent $\Pi_d$ acts trivially on $CH_i(X)$
  for all $i \leq n-2$. By applying $n-1$ times lemma \ref{decbir}, we
  get that $(X,\Pi_d,0)$ is isomorphic to some Chow motive
  $(Y,q,n-1)$. This means that there exists a correspondence $f \in
  \Hom((X,\Pi_d,0) ,(Y,q,n-1))$ such that $\Pi_d = \Pi_d \circ f^{-1}
  \circ q \circ f \circ \Pi_d$. In particular $\Pi_d$ factors through
  $Y$ and a straightforward analysis of the dimensions shows that
  $\Pi_d$ has a representative supported on $X \times Z$ with $Z$ a
  subvariety of $X$ of dimension $n+1$.
\end{proof}

\begin{corollary} \label{fourfold} Every fourfold $X$ with
  $CH_0(X)_\alg$ representable has a CK decomposition and satisfies
  the motivic Lefschetz conjecture and hence the standard conjectures.
\end{corollary}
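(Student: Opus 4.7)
The plan is to obtain the corollary as an immediate specialization of Theorem \ref{B}. A fourfold has dimension $d = 2n$ with $n = 2$, and in that case the hypothesis list ``$CH_0(X)_\alg, CH_1(X)_\alg, \ldots, CH_{n-2}(X)_\alg$ representable'' collapses to the single condition ``$CH_0(X)_\alg$ representable'', which is exactly what we are given. So the first step is simply to observe that $n - 2 = 0$ and verify that Theorem \ref{B} applies to $X$ with this choice of parameters.

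Applying Theorem \ref{B} then produces mutually orthogonal idempotents $\Pi_0, \ldots, \Pi_8 \in CH_4(X \times X)$ summing to the diagonal, where $\Pi_i$ for $i \neq 4$ coincides with the idempotent $\Pi_{i,\lfloor i/2 \rfloor}$ of theorems \ref{projector1} and \ref{projector2} (and in particular carries its Lefschetz isomorphism $(X, \Pi_i, 0) \simeq (X, \Pi_{8-i}, i-4)$ for $i > 4$), and $\Pi_4$ has a representative supported on $X \times Z$ with $\dim Z = 3$. By construction these idempotents form a Chow--K\"unneth decomposition verifying the motivic Lefschetz conjecture for $X$. The standard conjectures for $X$ then follow from the motivic Lefschetz conjecture via \cite[5.4.2.2]{An}, as already noted in the discussion immediately preceding Theorem \ref{basictheorem}.

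Since all of the real content has been packaged into Theorem \ref{B}, there is no substantive obstacle to overcome; the proof is pure bookkeeping. One could additionally invoke Remark \ref{gendiag} to observe that the Bloch--Srinivas argument upgrades representability of $CH_0(X)_\alg$ to representability of $CH_2(X)_\alg$, so in fact $CH_i(X_\C)_\alg$ is representable for every $i$ and all four equivalent conditions of Theorem \ref{theorem2} are satisfied; but this strengthening is automatic rather than part of the hypothesis and is not required for the statement of the corollary.
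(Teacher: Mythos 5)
Your proof is correct and is exactly the paper's intended derivation: Corollary \ref{fourfold} is stated as an immediate consequence of Theorem \ref{B} with $d=2n=4$, so that the hypothesis list collapses to representability of $CH_0(X)_\alg$ alone. The additional observation via Remark \ref{gendiag} is accurate but, as you note, not needed.
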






\begin{corollary} Let $X$ be a smooth projective fourfold admitting a
  curve $C$ as a base for its maximal rationally connected fibration.
  This means that there exists a rational map $f : X \dashrightarrow
  C$ with rationally connected general fiber. Then $X$ has a CK
  decomposition and satisfies the motivic Lefschetz conjecture and
  hence the standard conjectures.
\end{corollary}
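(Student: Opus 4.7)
The plan is to reduce the statement to Corollary \ref{fourfold}: it suffices to verify that $CH_0(X)_\alg$ is representable, after which the Chow--K\"unneth decomposition, the motivic Lefschetz conjecture, and hence the standard conjectures are all immediate.

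First, I would resolve the indeterminacy of $f: X \dashrightarrow C$ to obtain a birational morphism $\pi: \tilde X \to X$ from a smooth projective fourfold $\tilde X$, together with a morphism $\tilde f: \tilde X \to C$ whose general fiber is still rationally connected (rational connectedness being a birational invariant of smooth projective varieties in characteristic zero, and the general fiber of $\tilde f$ being birational to the general fiber of $f$). Since $CH_0$ of a smooth projective variety over an algebraically closed field is a birational invariant, $CH_0(X)_\alg$ is representable if and only if $CH_0(\tilde X)_\alg$ is. Next, I would invoke the Graber--Harris--Starr theorem, which produces a section $s: C \to \tilde X$ of $\tilde f$ (this is where the rational connectedness of the general fiber and the fact that the base is a curve are crucially used). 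Consider then the $\Q$-linear endomorphism $\Phi := s_* \circ \tilde f_*$ of $CH_0(\tilde X)$. The key claim is that $\Phi$ acts as the identity on $CH_0(\tilde X)_\alg$. By the moving lemma for zero-cycles on smooth projective varieties, any class in $CH_0(\tilde X)_\alg$ admits a representative $z = \sum n_i [x_i]$ supported on the open set $U \subset \tilde X$ over which $\tilde f$ is smooth with rationally connected fibers. For each such $x_i$, the fiber $\tilde X_{\tilde f(x_i)}$ is a smooth projective rationally connected variety, hence has $CH_0 = \Q$, so $[x_i] = [s(\tilde f(x_i))]$ already in the Chow group of the fiber and \emph{a fortiori} in $CH_0(\tilde X)$. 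Summing gives $z = \sum n_i [s(\tilde f(x_i))] = s_* \tilde f_* z = \Phi(z)$. Therefore $CH_0(\tilde X)_\alg = s_* CH_0(C)_\alg$, which exhibits representability via the graph of $s$ as a correspondence in $CH_1(C \times \tilde X)$.

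With representability of $CH_0(X)_\alg$ in hand, Corollary \ref{fourfold} concludes the argument. The only non-formal input is the Graber--Harris--Starr theorem; once a section is at hand, everything reduces to the moving lemma together with the defining property $CH_0 = \Q$ of rationally connected varieties, so there is no real technical obstacle.
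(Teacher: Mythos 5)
Your proposal is correct and takes the same route as the paper, whose entire proof is the one-line assertion that $CH_0(X)_{\alg}$ is representable followed by an appeal to the preceding corollary on fourfolds with representable $CH_0$; you simply supply the standard details (resolution of indeterminacy, a section via Graber--Harris--Starr, the moving lemma, and triviality of $CH_0$ of rationally connected fibers) that the paper leaves implicit. (Since the paper works with $\Q$-coefficients, a multisection obtained from a general linear section would serve in place of the Graber--Harris--Starr section, but your argument is perfectly valid.)
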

\begin{proof}
  $CH_0(X)_\alg$ is representable.
\end{proof}

\begin{corollary} \label{Fano} Every rationally connected smooth
  projective fourfold (e.g. every smooth projective variety which is
  birational to a Fano fourfold) has a Chow-K\"unneth decomposition
  and satisfies the motivic Lefschetz conjecture.
\end{corollary}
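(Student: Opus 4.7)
The plan is to reduce this corollary to Corollary \ref{fourfold} by showing that a rationally connected smooth projective fourfold $X$ automatically has representable (indeed, trivial) $CH_0(X)_\alg$.

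First I would observe that if $X$ is rationally connected, then any two closed points of $X$ are joined by a chain of rational curves, and each rational curve contributes only rationally trivial zero-cycles between its points. Hence all closed points of $X$ are rationally equivalent, so $CH_0(X) \simeq \Q$ via the degree map and $CH_0(X)_\alg = 0$. In particular, $CH_0(X)_\alg$ is trivially representable: one may take $C$ to be any smooth projective curve and $\Gamma$ to be the zero correspondence, giving $\Gamma_* CH_0(C)_\alg = 0 = CH_0(X)_\alg$. Once this is in hand, Corollary \ref{fourfold} applies directly to yield a Chow--K\"unneth decomposition together with the motivic Lefschetz conjecture, and therefore also the standard conjectures.

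For the parenthetical remark concerning varieties birational to a Fano fourfold, I would invoke two standard facts: smooth Fano varieties over a field of characteristic zero are rationally connected (Campana and Koll\'ar--Miyaoka--Mori), and rational connectedness is a birational invariant among smooth projective varieties. Hence any smooth projective fourfold birational to a Fano fourfold is rationally connected, and the first part of the corollary applies. There is no real obstacle here; the only substance is the reduction of the vanishing of $CH_0(X)_\alg$ to rational connectedness, and the rest is a direct appeal to the previously established Corollary \ref{fourfold}.
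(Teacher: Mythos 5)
Your proposal is correct and follows exactly the paper's route: the paper's proof is the one-line observation that a rationally connected fourfold satisfies $CH_0(X)=\Q$, whence $CH_0(X)_\alg=0$ is representable and Corollary \ref{fourfold} applies. Your additional justifications (rational chain-connectedness of points, rational connectedness of Fanos, and birational invariance) are the standard facts implicitly invoked by the paper.
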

\begin{proof} A rationally connected smooth projective fourfold $X$
  satisfies $CH_0(X)=\Q$.
\end{proof}

\begin{remark}
  Arapura \cite{Arapura} proved the Lefschetz standard conjecture for
  unirational fourfolds. He does so by proving that a unirational
  fourfold is \textit{motivated} by surfaces.  More generally, Arapura
  proves that any variety which is motivated by a surface (this means
  that the cohomology of $X$ is generated by the cohomology of product
  of surfaces via correspondences) satisfies the standard Lefschetz
  conjecture.

  Corollary \ref{fourfold} is more precise for unirational fourfolds
  because we obtain the Lefschetz isomorphism modulo rational
  equivalence (rather than just modulo homological equivalence).
  Moreover, corollary \ref{fourfold} includes the case of rationally
  connected fourfolds as well as the case of fourfolds admitting a
  curve as a base for their maximal rationally connected fibration.

  Let's also mention that in what follows Arapura's technique doesn't
  seem to apply to prove the Lefschetz standard conjecture because the
  middle cohomology of the varieties in question is not necessarily
  generated by the cohomology of products of surfaces.
\end{remark}

\begin{theorem} \label{C} Let $X$ be a smooth projective variety of
  odd dimension $d=2n+1$ with $H^{n+1}(X,\Omega_X^{n-1})=0$. If
  $CH_0(X)_\alg, CH_1(X)_\alg, \ldots, CH_{n-2}(X)_\alg$ are
  representable, then $X$ has a CK decomposition $\{\Pi_i\}$ and the
  motivic Lefschetz conjecture holds for $X$. Moreover, the
  idempotents $\Pi_i$ are as in theorems \ref{projector1} and
  \ref{projector2} for $i \neq d$ and the idempotent $\Pi_d$ has a
  representative supported on $X \times Z$ with $Z$ a subvariety of
  $X$ of dimension $n+2$.
\end{theorem}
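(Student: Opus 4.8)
The plan is to transcribe, essentially word for word, the proof of theorem \ref{B} to odd dimension, the only genuinely new ingredient being one Hodge-theoretic step that handles the cohomology group lying just above the middle. As in theorem \ref{B} there are three things to do: (1) verify that $H_i(X) = N^{\lfloor i/2 \rfloor}H_i(X)$ for all $i > d$, so that theorem \ref{basictheorem} applies and produces a Chow--K\"unneth decomposition $\{\Pi_i\}$ in which $\Pi_i$ ($i \neq d$) satisfies all the conclusions of theorems \ref{projector1} and \ref{projector2} and for which the motivic Lefschetz conjecture (hence the standard conjectures) holds; (2) show that $\Pi_d$ acts trivially on $CH_i(X)$ for $i \leq n-2$; and (3) deduce from (2) that $\Pi_d$ has a representative supported on $X \times Z$ with $\dim Z = n+2$.

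For (1): by a generalized decomposition of the diagonal as in the proof of theorem \ref{B} (Laterveer \cite[2.1]{Laterveer}), the representability of $CH_0(X)_\alg, \ldots, CH_{n-2}(X)_\alg$ yields $H_i(X) = N^{\lfloor i/2 \rfloor}H_i(X)$ for all $i > d+1$ (for the even--degree groups below the middle this uses, as in the next paragraph, the Hodge numbers of $X$ that vanish automatically by Bloch--Srinivas). Since $d = 2n+1$ is odd, the only degree not covered is $i = d+1 = 2n+2$, i.e. the group $H_{d+1}(X) = H^{2n}(X)$. For that group theorem \ref{decdiagonal} (with $s = n-2$), read after transposition, already gives what is needed: writing $\Delta_X = {}^t\gamma_0 + \cdots + {}^t\gamma_{n-2} + {}^t\gamma^{n-1}$ with ${}^t\gamma_i$ supported on $\Gamma_{i+1} \times D^i$ and ${}^t\gamma^{n-1}$ on $X \times D^{n-1}$, $\dim D^{n-1} = n+2$, each ${}^t\gamma_i$ factors over a desingularization of $\Gamma_{i+1}$, which has dimension $i+1 \leq n-1$, so $H^{2n}$ of it vanishes and ${}^t\gamma_i$ acts as zero on $H^{2n}(X)$. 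Hence $\mathrm{id}_{H^{2n}(X)} = ({}^t\gamma^{n-1})_*$, a morphism of Hodge structures factoring through $H^2(\widetilde{Z})(-(n-1))$ for a desingularization $\widetilde{Z}$ of $D^{n-1}$, $\dim \widetilde{Z} = n+2$. In particular $H^{2n}(X) = N^{n-1}H^{2n}(X)$ and $H^{2n}(X)$ is, as a rational Hodge structure, a direct summand of $H^2(\widetilde{Z})(-(n-1))$.

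Now the hypothesis $H^{n+1}(X,\Omega_X^{n-1}) = 0$ enters: by Dolbeault it says $H^{n-1,n+1}(X) = 0$, hence $H^{n+1,n-1}(X) = 0$ by conjugation, and since the Hodge types of $H^2(\widetilde{Z})(-(n-1))$ lie in $\{(n+1,n-1),(n,n),(n-1,n+1)\}$ this forces $H^{2n}(X)$ to be of pure type $(n,n)$. Write $H^2(\widetilde{Z}) = \mathrm{NS}(\widetilde{Z})_\Q \oplus T$ with $T$ the transcendental part; $T$ is a polarizable, hence semisimple, Hodge structure with no nonzero pure $(1,1)$ sub--Hodge--structure (Lefschetz $(1,1)$), and therefore no nonzero pure $(1,1)$ quotient, so the projection $H^2(\widetilde{Z})(-(n-1)) \twoheadrightarrow H^{2n}(X)$ kills $T(-(n-1))$ and factors through $\mathrm{NS}(\widetilde{Z})_\Q(-(n-1))$. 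As this projection is induced by an algebraic correspondence, $H^{2n}(X)$ is spanned by classes of algebraic cycles, i.e. $H^{2n}(X) = N^n H^{2n}(X)$, equivalently $H_{d+1}(X) = N^{\lfloor (d+1)/2 \rfloor}H_{d+1}(X)$. Together with the previous paragraph this establishes (1), and theorem \ref{basictheorem} then delivers the Chow--K\"unneth decomposition and the motivic Lefschetz conjecture for $X$.

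Points (2) and (3) are exactly as in theorem \ref{B}: corollary \ref{rep_inj} gives that the Abel--Jacobi maps $AJ_i : CH_i(X)_\hom \to J_i(X)\otimes\Q$ are injective for $i \leq n-2$, Esnault--Levine's theorem \ref{ELdeligne} makes them bijective, and then, using the properties of the projectors $\Pi_{2i,i}$ and $\Pi_{2i+1,i}$, one gets $CH_i(X) = (\Pi_{2i} + \Pi_{2i+1})_* CH_i(X)$ for $i \leq n-2$, so $\Pi_d$ acts as zero on $CH_i(X)$ for $i \leq n-2$; consequently lemma \ref{decbir} applies $n-1$ times (at the $k$-th step one needs $CH_0$ of the appropriate $\L^{-(k-1)}$-twist, i.e. $(\Pi_d)_* CH_{k-1}(X)$, to vanish, which holds for $k \leq n-1$), yielding an isomorphism $(X,\Pi_d,0) \simeq (Y,q,n-1)$ for some smooth projective $Y$, whence a dimension count identical to that in theorem \ref{B} shows $\Pi_d$ has a representative supported on $X \times Z$ with $\dim Z = d - (n-1) = n+2$. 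The one truly new point is the promotion of $H^{2n}(X)$ from coniveau $n-1$ to coniveau $n$ carried out in the second and third paragraphs, and that is the step I expect to be the main obstacle; the care it requires is in checking that the (transposed) decomposition of the diagonal really delivers coniveau exactly $n-1$ together with control of $H^{2n}(X)$ by the $H^2$ of an $(n+2)$-dimensional variety, so that the vanishing of the single Hodge number $h^{n-1,n+1}(X)$ suffices to conclude.
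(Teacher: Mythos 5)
Your proposal is correct and follows essentially the same route as the paper: a generalized decomposition of the diagonal gives the coniveau statements above the middle and exhibits $H_{d+1}(X)$ as a quotient of $H^2$ of an $(n+2)$-dimensional variety, the vanishing of $h^{n-1,n+1}(X)$ plus the Lefschetz $(1,1)$-theorem (via semisimplicity of the transcendental part, which you spell out and the paper leaves implicit) promotes $H_{d+1}(X)$ to algebraic classes, and then theorem \ref{basictheorem} applies. The treatment of the support of $\Pi_d$ via corollary \ref{rep_inj}, theorem \ref{ELdeligne} and $n-1$ applications of lemma \ref{decbir} is exactly the argument of theorem \ref{B}, as the paper also indicates.
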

\begin{proof} As for the proof of theorem \ref{B}, a generalized
  decomposition of the diagonal argument shows that the assumption on
  the Chow groups of $X$ implies that $H_{i}(X)=N^{\lfloor i/2
    \rfloor}H_{i}(X)$ for $i > d+1$ and that $H_{d+1}(X) =
  N^{\frac{d-1}{2}}H_{d+1}(X)$. This last equality means that there is
  a smooth projective variety $S$ of dimension $n+2$ and a map $f : S
  \r X$ such that $f_*H^2(S) = H_{d+1}(X)$. Because
  $H^{n+1}(X,\Omega_X^{n-1})=0$, we see that $H_{d+1}(X)$ is made of
  Hodge classes. By the Lefschetz $(1,1)$-theorem applied to $S$, we
  see that $H_{d+1}(X)$ is spanned by algebraic cycles, i.e. that
  $H_{d+1}(X) = N^{\frac{d+1}{2}}H_{d+1}(X)$. We can thus apply
  theorem \ref{basictheorem} to get a CK decomposition $\{\Pi_i\}_{0
    \leq i \leq 2d}$ that satisfies the motivic Lefschetz theorem.
  The proof of the fact that $\Pi_d$ has a representative supported on
  $X \times Z$ for $Z$ a subvariety of $X$ of dimension $n+2$ goes
  along the same lines as the proof of theorem \ref{B}.
\end{proof}

\begin{corollary} Let $X$ be a smooth projective fivefold. If
  $CH_0(X)_\alg$ is representable and if $H^{3}(X,\Omega_X)=0$, then
  $X$ has a CK decomposition and satisfies the standard conjectures.
\end{corollary}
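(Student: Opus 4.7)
The plan is to simply specialize Theorem \ref{C} to the case $d=5$. For a smooth projective fivefold we have $d=2n+1$ with $n=2$, so the hypothesis $H^{n+1}(X,\Omega_X^{n-1})=0$ of Theorem \ref{C} becomes $H^{3}(X,\Omega_X^{1})=0$, which is precisely the assumption $H^{3}(X,\Omega_X)=0$. The representability hypothesis of Theorem \ref{C} reads ``$CH_0(X)_\alg, CH_1(X)_\alg,\ldots, CH_{n-2}(X)_\alg$ are representable''; for $n=2$ this range collapses to the single group $CH_0(X)_\alg$, whose representability is exactly what is assumed.

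First I would check that these two translations are correct by just substituting $n=2$ into every hypothesis of Theorem \ref{C}. Once that book-keeping is done, I would simply invoke Theorem \ref{C}: it produces mutually orthogonal idempotents $\{\Pi_i\}_{0\leq i\leq 10}$ summing to $\Delta_X$ and acting as the K\"unneth projectors in homology, and it further asserts that $X$ satisfies the motivic Lefschetz conjecture. By the remark made just before Theorem \ref{basictheorem}, the motivic Lefschetz conjecture implies the Lefschetz standard conjecture, and since $X$ is defined over a field of characteristic zero (being a subfield of $\C$), the Lefschetz standard conjecture implies all of Grothendieck's standard conjectures by \cite[5.4.2.2]{An}.

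There is no real obstacle here, since this is purely a specialization; the only thing worth double-checking is the parity and indexing in $n-2$, to make sure that the statement of Theorem \ref{C} is not vacuous or mismatched when $n=2$. Once that is confirmed, the corollary follows with nothing further to prove.
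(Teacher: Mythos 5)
Your proof is correct and is exactly the intended argument: the corollary is the case $d=5$, $n=2$ of theorem \ref{C}, where $H^{n+1}(X,\Omega_X^{n-1})$ becomes $H^3(X,\Omega_X)$ and the representability hypothesis reduces to $CH_0(X)_\alg$ alone. The paper treats this as an immediate specialization and gives no separate proof, so your book-keeping is all that is needed.
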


\begin{corollary} Let $X$ be a smooth projective rationally connected
  fivefold, e.g a fivefold which is birational to a Fano fivefold. If
  $H^{3}(X,\Omega_X)=0$, then $X$ has a CK decomposition and satisfies
  the standard conjectures.
\end{corollary}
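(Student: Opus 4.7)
The plan is to reduce this corollary immediately to the previous corollary (the one giving a CK decomposition and the standard conjectures for smooth projective fivefolds with $CH_0(X)_\alg$ representable and $H^3(X,\Omega_X)=0$). Since the hypothesis $H^3(X,\Omega_X)=0$ is assumed, the only thing to verify is that $CH_0(X)_\alg$ is representable for a rationally connected fivefold $X$.

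First I would recall that a smooth projective rationally connected variety $X$ over $\C$ satisfies $CH_0(X) = \Q$: any two closed points can be joined by a chain of rational curves, hence are rationally equivalent. In particular $CH_0(X)_\alg = 0$, which is trivially representable (take any curve $C$ and the zero correspondence in $CH_1(C \times X)$; this is what is discussed in the setup of \S2.1 when $d_i=0$ or, equivalently, when the Chow group in question vanishes). Therefore all the hypotheses of the preceding corollary are met, and we conclude that $X$ has a Chow-K\"unneth decomposition and satisfies the motivic Lefschetz conjecture, hence by \cite[5.4.2.2]{An} all of Grothendieck's standard conjectures.

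For the parenthetical remark, I would invoke the theorem of Campana and of Koll\'ar-Miyaoka-Mori that every smooth Fano variety is rationally connected, and the fact that rational connectedness is a birational invariant among smooth projective varieties; so any $X$ birational to a smooth Fano fivefold falls into the rationally connected case already treated.

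There is essentially no obstacle: the entire content of the statement has been built up in the preceding results, and the role of this corollary is purely to spell out a clean geometric class of examples (rationally connected, or birationally Fano, fivefolds with vanishing $H^3(X,\Omega_X)$) to which the general machinery applies.
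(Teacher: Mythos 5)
Your proposal is correct and matches the paper's (implicit) argument exactly: the corollary is deduced from the preceding one by observing that a rationally connected variety has $CH_0(X)=\Q$, hence $CH_0(X)_{\alg}=0$ is representable, with the Fano case handled by Campana and Koll\'ar--Miyaoka--Mori plus birational invariance. Nothing further is needed.
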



 \subsection{Hypersurfaces of very small degree are Kimura finite dimensional}

 Otwinowska \cite{Otwinowska} proved that if $X$ is a smooth
 hyperplane section of a hypersurface in $\P^{n+1}$ covered by
 $l$-planes then $CH_i(X)_\hom = 0$ for $i \leq l-1$ (see also
 Esnault, Levine and Viehweg \cite{ELV}).  Therefore when $l=\lfloor
 n/2\rfloor$ the Chow groups $CH_i(X)_\alg$ are all representable by
 remark \ref{gendiag}. As a direct application of theorem
 \ref{theorem2} we get

 \begin{theorem} Let $l=\lfloor n/2\rfloor$ and let $X$ be a smooth
   hyperplane section of a hypersurface in $\P^{n+1}$ covered by
   $l$-planes. Then,
  \begin{itemize}
  \item  if $n-1$ is even, $\h(X)=\mathds{1} \oplus \L \oplus
    \L^{\otimes 2} \oplus \cdots \oplus  \L^{\otimes n-1}$.
  \item if $n-1$ is odd, $\h(X)=\mathds{1} \oplus \L \oplus \cdots
    \oplus \L^{\otimes l} \oplus \h_1(J_l^\alg)(l) \oplus \L^{\otimes
      l+1}\oplus \cdots \oplus \L^{\otimes n-1}$.
  \end{itemize}
  Moreover, in any case, $\h(X)$ is finite dimensional in the sense of
  Kimura.
\end{theorem}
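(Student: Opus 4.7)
The plan is to combine three results that are already available: Otwinowska's vanishing for low-degree Chow groups of hypersurfaces covered by linear subspaces, the generalized Bloch--Srinivas decomposition of the diagonal (Remark \ref{gendiag}), and the implication $(4)\Rightarrow(1)$ of Theorem \ref{theorem2}. Weak Lefschetz then simplifies the resulting formula according to the parity of $n-1$.

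First, Otwinowska's theorem (quoted just before the statement) gives $CH_i(X)_{\hom} = 0$ for all $i \leq l-1$. Since $CH_i(X)_\alg \subseteq CH_i(X)_\hom$, this means $CH_i(X)_\alg = 0$, hence trivially representable, for such $i$. The elementary inequality $\lfloor (n-1)/2\rfloor \leq \lfloor n/2 \rfloor = l$, which is immediate in both parities of $n$, shows that this range suffices to trigger Remark \ref{gendiag}, and consequently $CH_i(X_\C)_{\alg}$ is representable for every $i$. Theorem \ref{theorem2}(1) now produces the motivic decomposition
$$\h(X) = \mathds{1} \oplus \h_1(\mathrm{Alb}_X) \oplus \L^{\oplus b_2} \oplus \h_1(J_1^a(X))(1) \oplus \cdots \oplus \h_1(J_{n-2}^a(X))(n-2) \oplus \L^{\otimes n-1}.$$

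To reach the two specific shapes claimed in the statement, I would compute the Betti numbers of $X$ using weak Lefschetz on the composition $X \hookrightarrow Y \hookrightarrow \P^{n+1}$: this gives $H_i(X) \simeq H_i(\P^{n-1})$ for $i < n-1$, and Poincar\'e duality on $X$ propagates the information past the middle degree. Thus $b_{2j+1}(X)=0$ whenever $2j+1\ne n-1$, so $J_j^a(X) = 0$ in that range. When $n-1$ is even, every abelian summand in the formula vanishes and $\h(X)$ reduces to the claimed direct sum of Lefschetz motives; when $n-1$ is odd, exactly one abelian summand survives, corresponding to the middle degree, yielding the remaining formula. Kimura finite dimensionality is then immediate: each Lefschetz twist is evenly finite dimensional, each $\h_1$ of an abelian variety is oddly finite dimensional by \cite{Kimura}, and finite direct sums of Kimura-finite motives are Kimura-finite. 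The only step requiring any care is the elementary parity check needed for Remark \ref{gendiag}; the substantive content has been packaged into Otwinowska's theorem and the machinery of Theorem \ref{theorem2}, and the present statement is essentially a corollary.
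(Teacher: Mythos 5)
Your proof is correct and follows exactly the route the paper itself takes: Otwinowska's vanishing, Remark \ref{gendiag} to upgrade to representability of all $CH_i(X_\C)_{\alg}$, and the implication $(4)\Rightarrow(1)$ of Theorem \ref{theorem2}, with the weak-Lefschetz parity bookkeeping supplying the final shape of the decomposition. The only caveat (present in the paper's statement as well, and harmless for finite dimensionality) is that the middle cohomology need not have rank one when $n-1$ is even, so the summand $\L^{\otimes (n-1)/2}$ should in general carry multiplicity $b_{n-1}$; likewise the surviving odd summand when $n-1$ is odd sits in middle degree $n-1=2(l-1)+1$, i.e.\ it is $\h_1(J^a_{l-1}(X))(l-1)$, as your argument in fact produces.
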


\begin{remark}
  Otwinowska also mentions that if $k(n-l)- \binom{d+l}{d}+1 \geq 0$,
  any smooth projective hypersurface of degree $d$ in $\P_\C^n$ is
  covered by linear projective varieties of dimension $l$.
\end{remark}

\begin{examples}
  Here are some varieties for which theorem \ref{theorem2} and the
  results of \cite{ELV} make it possible to prove that they have
  finite dimensional Chow motive :
  \begin{itemize}
  \item Cubic $5$-folds.
  \item A $5$-fold which is the smooth intersection of a cubic and a
    quadric.
  \item A $7$-fold which is the smooth intersection of two quadrics.
  \end{itemize}
\end{examples}

Further examples of varieties with finite dimensional Chow motive can
be constructed as follows. Let $X$ be a variety as in the theorem
above. Consider smooth projective varieties obtained from $X$ by
successively blowing up smooth curves. Then, by the blowing-up formula
for Chow motives, such varieties have finite dimensional Chow motive.
Moreover any variety $Y$ which is dominated by a product of such
varieties has finite dimensional Chow motive.




\begin{footnotesize}
  \bibliographystyle{plain} 
  \bibliography{bib} \medskip

  \textsc{DPMMS, University of Cambridge, Wilberforce Road, Cambridge,
    CB3 0WB, United Kingdom}
\end{footnotesize}

\textit{e-mail :}  \texttt{C.Vial@dpmms.cam.ac.uk}

\end{document}